\newtheorem{thm}{Theorem}
\newtheorem{prop}{Proposition}
\newtheorem{cor}{Corollary}
\newtheorem{conj}{Conjecture}
\newtheorem{lemma}{Lemma}
\theoremstyle{definition}
\newtheorem{defi}{Definition}
\crefname{defi}{Definition}{Definitions}
\theoremstyle{remark}
\newtheorem{rk}{Remark}
\newcounter{cas}
\newtheoremstyle{assert}
  {.5\baselineskip±.2\baselineskip}   
  {.5\baselineskip±.2\baselineskip}   
  {\itshape}  
  {0pt}       
  {\bfseries} 
  {.}         
  {5pt plus 1pt minus 1pt} 
  {(\thmnumber{#2})}          
\theoremstyle{assert}
\newtheorem{as}[cas]{}
\DeclareMathOperator{\di}{dist}
\DeclareMathOperator{\girth}{girth}
\DeclareMathOperator{\et}{and}
\newcommand{\eps}{\varepsilon}
\newcommand{\ba}{\bm{\alpha}}
\newcommand{\be}{\mathbf{e}}
\newcommand{\bI}{\mathbf{I}}
\newcommand{\bX}{\mathbf{X}}
\newcommand{\cI}{\mathscr{I}}
\newcommand{\cIm}{\cI_{\max}}
\newcommand{\cIM}{\cI_{\alpha}}
\newcommand{\pth}[1]{\left( #1 \right)}
\newcommand{\pr}[1]{\mathbb{P}\left[ #1 \right]}
\newcommand{\esp}[1]{\mathbb{E}\left[ #1 \right]}
\newcommand{\sst}[2]{\left\{#1\,:\,#2\right\}}
\newcommand{\abs}[1]{\left\lvert#1\right\rvert}
\newcommand{\val}{\iota}
\tikzstyle{vertex} = [draw,fill,shape=circle,node distance=80pt]
\tikzstyle{wertex} = [draw=black,fill=white,shape=circle,node distance=80pt]
\tikzstyle{gertex} = [draw=black,fill=black!25,shape=circle,node distance=80pt]
\tikzstyle{edge} = [fill,opacity=.5,fill opacity=.5,line cap=round, line join=round, line width=50pt]
\newcommand{\newpar}[1]{%
    \par
    \addvspace{\medskipamount}
    \noindent\textit{#1\@addpunct{.}}\enspace\ignorespaces
    }
\title{Fractional chromatic number, maximum degree and girth}
\author{François Pirot}
\address{\'Equipe Orpailleur, LORIA (Université de Lorraine, C.N.R.S., INRIA),
Vandœuvre-lès-Nancy, France and
Department of Mathematics, Radboud University Nijmegen, Netherlands.}
\email{francois.pirot@loria.fr}
\author{Jean-Sébastien Sereni}
\address{Service public français de la recherche, Centre National de la Recherche Scientifique (ICube, CSTB), Strasbourg, France.}
\email{sereni@kam.mff.cuni.cz}
\begin{document}

\begin{abstract}
We introduce a new method for computing bounds on the independence
number and fractional chromatic number of classes of graphs with local
constraints, and apply this method in various scenarios. 
We establish a formula that generates a general upper bound for the fractional chromatic number of
triangle-free graphs of maximum degree~$\Delta \ge 3$. This upper bound matches that deduced from the
fractional version of Reed's bound for small values of~$\Delta$, and improves it when~$\Delta\ge 17$,
transitioning smoothly to the best possible asymptotic regime, barring a breakthrough in Ramsey theory. 
Focusing on smaller values of~$\Delta$, we also demonstrate that every
    graph of girth at least~$7$ and maximum degree~$\Delta$ has fractional
    chromatic number at most~$1+ \min_{k \in \mathbb{N}} \frac{2\Delta +
    2^{k-3}}{k}$. In particular, the fractional chromatic number of a graph
    of girth~$7$ and maximum degree~$\Delta$ is
    at most~$\frac{2\Delta+9}{5}$ when~$\Delta \in [3,8]$,
    at most~$\frac{\Delta+7}{3}$ when~$\Delta \in [8,20]$,
    at most~$\frac{2\Delta+23}{7}$ when~$\Delta \in [20,48]$, and
    at most~$\frac{\Delta}{4}+5$ when~$\Delta \in [48,112]$.
In addition, we also obtain new lower bounds on the independence ratio of graphs of maximum
    degree~$\Delta \in \{3,4,5\}$ and girth~$g\in \{6,\dotsc,12\}$,
    notably~$1/3$ when~$(\Delta,g)=(4,10)$ and~$2/7$ when~$(\Delta,g)=(5,8)$.
\end{abstract}

\maketitle

\vspace{-8pt}
\section{Introduction} 

\subsection{A motivation coming from Ramsey theory}
Since the seminal result of Ramsey~\cite{Ram30},
the branch of combinatorics now called ``Ramsey Theory'' has known an ever increasing range of interest from the community. 

\begin{thm}[Ramsey, 1930]
For every integers~$s,t\ge 2$, there exists a minimal integer~$R(s,t)$ such that every graph on~$n\ge R(s,t)$ vertices contains either a clique of size~$s$, or an independent set of size~$t$.
\end{thm}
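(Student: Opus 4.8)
The plan is to prove the existence of $R(s,t)$ by induction on $s+t$, establishing along the way the classical recursive bound $R(s,t)\le R(s-1,t)+R(s,t-1)$. First I would dispose of the base cases: when $s=2$, any graph on $t$ vertices either contains an edge, which is a clique of size~$2$, or is edgeless, in which case its whole vertex set is an independent set of size~$t$; hence $R(2,t)\le t$, and symmetrically $R(s,2)\le s$. These anchor the induction. One should also note at the outset that the target property is monotone in~$n$: if every graph on $n_0$ vertices contains a clique of size~$s$ or an independent set of size~$t$, then so does every graph on $n\ge n_0$ vertices, by restricting to any $n_0$ of its vertices. Consequently, once a valid threshold is exhibited, the set of valid thresholds is a nonempty set of positive integers and thus has a minimum, which we call $R(s,t)$.

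For the inductive step, assume $s,t\ge 3$ and that $R(s-1,t)$ and $R(s,t-1)$ exist. Put $n=R(s-1,t)+R(s,t-1)$ and let $G$ be an arbitrary graph on~$n$ vertices. Fix a vertex~$v$ and partition the remaining $n-1$ vertices into the set~$A$ of neighbours of~$v$ and the set~$B$ of non-neighbours of~$v$. Since $\abs{A}+\abs{B}=n-1=R(s-1,t)+R(s,t-1)-1$, at least one of the inequalities $\abs{A}\ge R(s-1,t)$ or $\abs{B}\ge R(s,t-1)$ holds. In the first case, $G[A]$ contains either an independent set of size~$t$, and we are done, or a clique of size~$s-1$, which together with~$v$ forms a clique of size~$s$. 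In the second case, $G[B]$ contains either a clique of size~$s$, and we are done, or an independent set of size~$t-1$, which together with~$v$ (non-adjacent to every vertex of~$B$) forms an independent set of size~$t$. In all cases $G$ contains the desired substructure, so $R(s,t)$ exists and is at most~$n$.

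The argument is entirely routine; the only point demanding a modicum of care is the bookkeeping in the inductive step, namely ensuring that the neighbourhood/non-neighbourhood split is exhaustive and that the extra vertex~$v$ is charged to the correct side — to the clique on the neighbour side, and to the independent set on the non-neighbour side. No genuine obstacle arises, which is to be expected for a statement of this vintage: the substance of the present work lies not here but in the quantitative refinements developed in the sections that follow.
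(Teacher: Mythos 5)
Your proof is correct: it is the classical Erd\H os--Szekeres induction, splitting on the neighbourhood and non-neighbourhood of a fixed vertex to obtain $R(s,t)\le R(s-1,t)+R(s,t-1)$, with the base cases $R(2,t)\le t$ and $R(s,2)\le s$, and the monotonicity remark correctly justifying that a minimal threshold exists. The paper itself gives no proof of this statement --- it simply cites Ramsey's 1930 result and then quotes the quantitative bound $R(s,t)\le\binom{s+t-2}{s-1}$ of Erd\H os and Szekeres, which is exactly what your recursion yields upon iteration, so your argument is the standard one and is fully in line with the background the paper relies on.
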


Computing the exact value of the \emph{Ramsey number}~$R(s,t)$ for all possible pairs~$(s,t)$ is a notorious problem, and only little progress has been made since the first quantitative result due to Erd\H os and Szekeres~\cite{ErSz35} that~$R(s,t) \le \binom{s+t-2}{s-1}$ for every~$s,t\ge 2$. Two particular regimes of the Ramsey numbers have attracted a particular focus, namely the \emph{diagonal Ramsey numbers}~$R(s,s)$, and the \emph{off-diagonal Ramsey numbers}~$R(s,t)$ where~$s$ is a fixed constant (typically~$s=3$) and~$t\to \infty$. The result of Erd\H os and Szekeres~\cite{ErSz35} implies that~$\frac{1}{s} \ln R(s,s) \le 4$, while Erd\H os~\cite{Erd47} showed in 1947 through an analysis of random graphs drawn from~$G(n,\frac{1}{2})$ that~$\frac{1}{s} \ln R(s,s) \ge \frac{1}{2}$. Reducing this gap is still an open problem from these days.
Our work is mainly motivated by the off-diagonal regime. In this setting, it is relevant to introduce the \emph{maximum degree}~$\Delta(\cdot)$ as an additional parameter, since on one hand~$\Delta(G)$ is an direct lower bound on the independence number of a triangle-free graph~$G$, and on the other hand graphs with smaller maximum degree are easier to properly colour, and hence contain larger independent sets. The best general upper bound on~$R(3,t)$ to this date is due to Shearer~\cite{She83} and can be deduced from the following result.

\begin{thm}[Shearer, 1983]
\label{thm:shearer}
Every triangle-free graph~$G$ on~$n$ vertices and of average degree~$d$ contains an independent set of size at least~$\frac{d\ln d - d + 1}{(d-1)^2}n$. 
In particular, this implies that~$R(3,t)\lesssim \frac{t^2}{\ln t}$.
\end{thm}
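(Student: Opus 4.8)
The plan is to follow Shearer's strategy: isolate the right real‑analytic density function, then run a randomized deletion induction whose inductive step is closed by analytic properties of that function. Define $f\colon[0,\infty)\to\mathbb{R}$ by $f(1)=\tfrac12$ and $f(d)=\frac{d\ln d-d+1}{(d-1)^2}$ for $d\neq1$ (so $f(0)=1$). The first task is the elementary but slightly delicate calculus: $f$ extends to a continuous, strictly decreasing, strictly convex function on $[0,\infty)$ with $f(d)=(1+o(1))\frac{\ln d}{d}$ as $d\to\infty$, and — crucially — $f$ satisfies
\[
  1+(n-1-d)\,f\!\pth{\frac{d\,(n-2d)}{\,n-1-d\,}}\ \ge\ n\,f(d)
  \qquad\text{for all integers }n\ge1\text{ and all }d\in[0,n/2].
\]
I expect this last inequality to be the crux of the whole argument: it is essentially what forces the precise shape of $f$, and it must hold with enough slack to absorb the Cauchy--Schwarz and rounding losses that appear below. (The restriction $d\le n/2$ is just Mantel's bound on the average degree of a triangle-free graph, and it is exactly what keeps the argument of $f$ nonnegative.)

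Next I would prove $\alpha(G)\ge n\,f(d)$ by induction on $n:=\abs{V(G)}$, where $d:=2\abs{E(G)}/n$ is the average degree. If $G$ is edgeless this is an equality; if $d\le1$ then $\alpha(G)\ge n-\abs{E(G)}=n(1-\tfrac d2)\ge n\,f(d)$, since a convex $f$ lies below the chord through $(0,1)$ and $(1,\tfrac12)$ on $[0,1]$. Otherwise, pick $v\in V(G)$ uniformly at random and delete $N[v]:=\{v\}\cup N(v)$. Triangle-freeness makes $N(v)$ independent, so $\abs{N[v]}=1+\deg v$ and the number of edges meeting $N[v]$ is exactly $\sum_{w\in N(v)}\deg w$; hence $G_v:=G-N[v]$ is triangle-free with $n-1-\deg v$ vertices and $\abs{E(G)}-\sum_{w\in N(v)}\deg w$ edges. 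Since $v$ has no neighbour in $G_v$, every independent set of $G_v$ can be enlarged by $v$, so applying the induction hypothesis to $G_v$ and averaging over the choice of $v$ gives
\[
  \alpha(G)\ \ge\ 1+\esp{\Phi\!\pth{\,n-1-\deg v,\ \abs{E(G)}-\textstyle\sum_{w\in N(v)}\deg w\,}},
  \qquad \Phi(m,j):=m\,f(2j/m)\ \ (\Phi(0,0):=0).
\]

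Here convexity does the work. The function $\Phi$ is the perspective of the convex function $f$, hence jointly convex on $\{m\ge0,\ j\ge0\}$, so Jensen's inequality bounds the right-hand side below by $1+\Phi\!\pth{\esp{n-1-\deg v},\ \esp{\abs{E(G)}-\sum_{w\in N(v)}\deg w}}$. Now $\esp{\deg v}=d$ and $\esp{\sum_{w\in N(v)}\deg w}=\frac1n\sum_w\deg(w)^2\ge d^2$ by Cauchy--Schwarz; since $\Phi$ is nonincreasing in its second coordinate (as $f$ is decreasing) and $\abs{E(G)}-d^2=\tfrac12 d(n-2d)\ge0$ by Mantel, one obtains
\[
  \alpha(G)\ \ge\ 1+(n-1-d)\,f\!\pth{\frac{d\,(n-2d)}{\,n-1-d\,}},
\]
and the key inequality from the first paragraph completes the induction. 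The only things left to verify carefully are the degenerate regimes — isolated vertices, and $d$ close to $n/2$ — where one should instead check the bound directly against a Tur\'an-type count.

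For the Ramsey consequence, let $G$ be triangle-free on $n$ vertices with $\alpha(G)<t$. Each neighbourhood in $G$ is independent, so the maximum — hence the average — degree of $G$ is at most $t-1<t$; as $f$ decreases, $t>\alpha(G)\ge n\,f(d)\ge n\,f(t)$, whence $n< t/f(t)=\frac{t(t-1)^2}{\,t\ln t-t+1\,}=(1+o(1))\frac{t^2}{\ln t}$. Taking $n=R(3,t)-1$ then yields $R(3,t)\lesssim t^2/\ln t$.
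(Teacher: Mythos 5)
The paper does not prove this statement; it quotes it from Shearer's 1983 note, so the relevant comparison is with Shearer's original argument, which your plan essentially reconstructs: delete a random closed neighbourhood, use triangle-freeness to count the removed edges as $\sum_{w\in N(v)}\deg w$, apply Cauchy--Schwarz to $\frac1n\sum_w\deg(w)^2\ge d^2$, and use convexity of $f$ to close the induction. Your only real departure is cosmetic: you apply Jensen to the perspective $\Phi(m,j)=m\,f(2j/m)$, whereas Shearer linearises $f$ at $d$ via the tangent line; the two are equivalent here, and your bookkeeping (independence of $N(v)$, $e-d^2=\tfrac12 d(n-2d)\ge0$ by Mantel, the extension $\Phi(0,\cdot)=0$ for the star-like cases, the base cases $d\le1$, and the Ramsey deduction at the end) is correct.

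The one genuine gap is that the entire analytic heart --- the displayed inequality $1+(n-1-d)\,f\bigl(\tfrac{d(n-2d)}{n-1-d}\bigr)\ge n\,f(d)$ --- is asserted rather than proved, and your expectation that it holds ``with enough slack to absorb the Cauchy--Schwarz and rounding losses'' misreads the situation: those losses are already absorbed before you invoke it (via monotonicity of $\Phi$ in $j$), and the inequality itself is tight, with no slack to spare. To close it, note that convexity gives $f(d')\ge f(d)+f'(d)(d'-d)$ with $d'=\frac{d(n-2d)}{n-1-d}$, and a direct computation gives $(n-1-d)(d'-d)=d-d^2$; since $f'\le0$, the claim reduces to $1+(d-d^2)f'(d)\ge(d+1)f(d)$, and Shearer's function is engineered precisely so that this holds with equality, i.e.\ $(d+1)f(d)+(d^2-d)f'(d)=1$ identically (one checks $f'(d)=\frac{2(d-1)-(d+1)\ln d}{(d-1)^3}$ and simplifies). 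So nothing in your plan would fail, but as written the proof is incomplete until you verify this identity together with the monotonicity and convexity of $f$ --- which are exactly the calculus facts Shearer's note actually establishes, and the reason $f$ has this particular form.
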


In this paper, we are interested in finding refinements of \Cref{thm:shearer}. We will study the \emph{fractional chromatic number} and \emph{Hall ratio} of graphs of given \emph{girth}. Before going further, we introduce the relevant notions.

\subsection{Definitions and observations}

The fractional chromatic number~$\chi_f(G)$ of a graph~$G$ is a refinement of
the chromatic number. It is the fractional solution to a linear program, the
integer solution of which is the chromatic number. Let~$G$ be a given graph; we
define~$\cI(G)$ to be the set of all independent sets of~$G$. We will often restrict to the set~$\cI_{\max}(G)$ of all maximal independent sets of~$G$, or to the set~$\cI_\alpha(G)$ of all maximum independent sets of~$G$.
Then the \emph{fractional chromatic number}~$\chi_f(G)$ of~$G$ is the solution of the following linear program.
\begin{align*}
 & \quad \min \sum_{I \in \cI(G)}  w_I\\
\text{such that} & \begin{cases}
        w_I \in [0,1] &\quad \text{for each~$I\in\cI(G)$}\\
    \displaystyle\sum_{\substack{I\in\cI(G)\\v\in I}} w_I \ge 1&\quad\text{for each~$v\in V(G)$}.
\end{cases}
\end{align*}

A \emph{fractional colouring of weight~$w$ of~$G$} is any instance within the domain of the above linear
program such that~$\sum w_I = w$. Observe that a fractional colouring of weight~$w$ remains valid after adding
vertices to its non-maximal independent sets, and that this does not affect its weight. Therefore, the optimal
value of the above linear program remains unchanged if one additionally requires that only maximal independent
sets, i.e. those in~$\cI_{\max}(G)$, are given a positive weight.  A~$k$-colouring of~$G$ is a special case of
a fractional colouring of weight~$k$ of~$G$, where~$w_I = 1$ if~$I$ is a monochromatic class of the
$k$-colouring, and~$w_I = 0$ otherwise. Note also that if~$W$ is a clique in~$G$, then any fractional
colouring of~$G$ is of weight at least~$\abs{W}$, and hence~$\chi_f(G) \ge \omega(G)$. More generally, if~$H$
is a subgraph of~$G$ then~$\chi_f(G)\ge\chi_f(H)$.
Another elementary lower bound on the fractional chromatic number comes from the independence
number~$\alpha(G)$. Indeed, the total weight induced by an independent set~$I$ of~$G$ on its vertex set is at
most~$w_I \alpha(G)$, and so the weight of a fractional colouring of~$G$ is at least~$\abs{V(G)}/\alpha(G)$. These
two lower bounds can be combined by the \emph{Hall ratio~$\rho(G)$} of~$G$, which is defined as~$\rho(G)
\coloneqq \max\sst{\abs{V(H)}/\alpha(H)}{H \subseteq G}$. The above observations allow us
to write the following inequalities:
\[ \max\left\{\omega(G),\frac{\abs{V(G)}}{\alpha(G)}\right\} \le \rho(G) \le
\chi_f(G) \le \chi(G) \le \Delta(G)+1, \]
where~$\Delta(G)$ is the maximum degree of~$G$. If~$G$ is a perfect graph then equality holds between~$\omega(G)$
and~$\chi(G)$, and so in particular between~$\omega(G)$ and~$\chi_f(G)$. Perfect graphs are those
graphs that contain no odd hole nor odd antihole, as was conjectured by Berge~\cite{Ber61} in~1961,
and proved by Chudnovsky \emph{et al.}~\cite{CRST06} in~2006. On the other side, the characterisation of the
graphs~$G$ for which equality holds between~$\chi(G)$ and~$\Delta(G)+1$ was established by Brooks~\cite{Bro41}
in~1941, and those graphs are cliques and odd cycles.  Since~$\chi_f(C_{2k+1}) = \frac{k}{2k+1}$, the only
graphs~$G$ such that~$\chi_f(G)=\Delta(G)+1$ are cliques.  Moreover, equality holds between the Hall ratio
of~$G$ and its fractional chromatic number for example when~$G$ is vertex transitive.

\subsection{Previous results on the Hall ratio}

Since the Hall ratio is the hereditary version of the inverse of the independence ratio (defined as the independence number divided by the number of vertices), any result on the independence ratio in a hereditary class of graphs can be extended to the Hall ratio. 
The Hall ratio of a graph has often been studied in relation with the \emph{girth}, which is
the length of a smallest cycle in the graph. A first result in this direction is the
celebrated introduction of the so-called ``deletion method'' in graph theory by Erd\H{o}s,
who used it to demonstrate the existence of graphs with arbitrarily large girth and
chromatic number. The latter is actually established by proving that the Hall ratio of the
graph is arbitrarily large. As a large girth is not strong enough a requirement to imply a
constant upper bound on the chromatic number, a way to pursue this line of research is to
express the upper bound in terms of the maximum degree~$\Delta(G)$ of the graph~$G$
considered. This also applies to the Hall ratio.

Letting~$\girth(G)$ stand for the girth of the graph~$G$, that is, the length of a shortest
cycle in~$G$ if~$G$ is not a forest and~$+\infty$ otherwise, we define~$\rho(d,g)$ to be the
supremum of the Hall ratios over all graphs of maximum degree at most~$d$ and girth at
least~$g$. We also let~$\rho(d,\infty)$ be the limit as~$g\to \infty$ of~$\rho(d,g)$ ---
note that if we fix~$d$ then~$\rho(d,g)$ is a non-increasing function of~$g$.
In symbols,~$\rho(d,g) \coloneqq \sup\sst{\abs{V(G)}/\alpha(G)}{\text{$G$ graph with~$\Delta(G)\le d$ and~$\girth(G)\ge g$}}$, and~$
\rho(d,\infty) \coloneqq \lim_{g \to \infty}\limits \rho(d,g)$.

In~1979, Staton~\cite{Sta79} established that~$\rho(d,4)\le\frac{5d-1}{5}$, in particular
implying that~$\rho(3,4)\le\frac{14}{5}$. The two graphs depicted in Figure~\ref{graph1},
called the graphs of Fajtlowicz and of Locke, have fourteen vertices each, girth~$5$, and no independent set of order~$6$. It follows that~$\rho(3,4)=\frac{14}{5}=\rho(3,5)$.
It is known that the graphs of Fajtlowicz and of Locke are the only two cubic
triangle-free and connected graphs with Hall ratio~$\frac{14}{5}$.
This follows from a result
of Fraughnaugh and Locke~\cite{FrLo95} for graphs with more than~$14$ vertices completed by an exhaustive computer check
on graphs with at most~$14$ vertices performed by Bajnok and Brinkmann~\cite{BaBr98}.

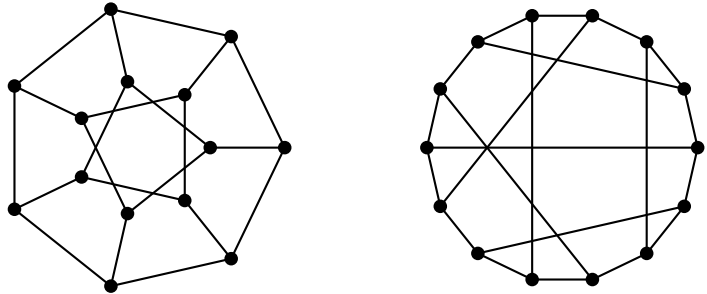
\begin{figure}[!ht]
\begin{center}
\begin{tikzpicture}[scale=0.9]
\def\a{360/7}
\foreach \k in {0, 1,...,6}{
	\draw[thick] (\k*\a:2.1) -- (\k*\a+360/7:2.1);
	\draw[thick] (\k*\a:1) -- (\k*\a+360*2/7:1);
	\draw[thick] (\k*\a:2.1) -- (\k*\a:1);
	\fill[black] (\k*\a:2.1) circle(0.1);
	\fill[black] (\k*\a:1) circle(0.1);
}
	
\tikzset{shift={(6.2,0)}}

\def\b{360/14}
\foreach \k in {0, 1,...,13}{
	\draw[thick] (\k*\b:2) -- (\k*\b+360/14:2);
	\fill[black] (\k*\b:2) circle(0.1);
}
	
	\draw[thick] (0:2) -- (180:2);
	\draw[thick] (180/7:2) -- (180*5/7:2);
	\draw[thick] (180*2/7:2) -- (180*12/7:2);
	\draw[thick] (180*3/7:2) -- (180*8/7:2);
	\draw[thick] (180*4/7:2) -- (180*10/7:2);
	\draw[thick] (180*6/7:2) -- (180*11/7:2);
	\draw[thick] (180*9/7:2) -- (180*13/7:2);
\end{tikzpicture}
\caption{The two cubic triangle-free connected graphs with Hall ratio~$\frac{14}{5}$.}\label{graph1}
\end{center}
\end{figure}

In~1983, Jones~\cite{Jon84} reached the next step by establishing that~$\rho(4,4)
= \frac{13}{4}$. Only one connected graph is known to attain this value: it has~$13$
vertices and is presented in Figure~\ref{graph2}. The value of~$\rho(d,4)$ when~$d \ge 5$ is still unknown;
the best general upper bound is due to Shearer~\cite{She91}, and improves his bound stated in
\Cref{thm:shearer}. He also provided an upper bound for~$\rho(d,6)$ as a consequence of a stronger result on
graphs with no cycle of length~$3$ or~$5$.

\begin{figure}[!ht]
\begin{center}
\begin{tikzpicture}[scale=0.9]
\def\a{360/13}
\foreach \k in {0, 1,...,12}{
	\draw[thick] (\k*\a:2) -- (\k*\a+360/13:2);
	\draw[thick] (\k*\a:2) -- (\k*\a+360*5/13:2);
	\fill[black] (\k*\a:2) circle(0.1);
	}
\end{tikzpicture}
\vspace{-18pt}
\caption{The only known 4-regular triangle-free connected graph of Hall ratio~$\frac{13}{4}$.}\label{graph2}
\end{center}
\end{figure}
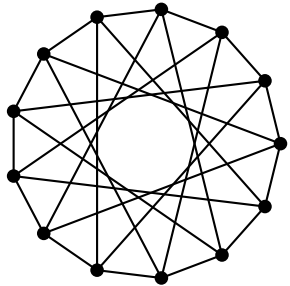

\begin{thm}[Shearer, 1991]\label{thm:shearer1}
Set~$f(0)\coloneqq 1$, and~$f(d) \coloneqq  \frac{1+(d^2-d)f(d-1)}{d^2+1}$ for every integer~$d\ge 2$.
If~$G$ is a triangle-free graph on~$n$ vertices with degree sequence~$d_1,\dotsc,d_n$, then~$G$ contains an independent of size~$\sum_{i=1}^n f(d_i)$.
\end{thm}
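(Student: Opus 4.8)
The plan is to prove the bound by induction on $\abs{V(G)}$, via the classical ``deletion method'': take a well-chosen vertex into the independent set, delete its closed neighbourhood, and recurse. Write $d(u)$ for $d_G(u)$. A preliminary step is to record, by an easy induction from the recursion, that $f$ is positive, strictly decreasing, and convex, in the sense that the differences $f(d-1)-f(d)$ are non-increasing in $d$; the crucial algebraic identity is the \emph{balance form}
\[
  (d+1)\,f(d) \;=\; 1 + d(d-1)\bigl(f(d-1)-f(d)\bigr) \qquad (d\ge 1),
\]
which for $d\in\{0,1\}$ reads simply $(d+1)f(d)=1$.

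For the base case, if $G$ has no edges then $\sum_u f(d(u))=\abs{V(G)}=\alpha(G)$. For the inductive step there are easy cases: an isolated vertex $v$ can be placed in the independent set with induction applied to $G-v$ (all degrees unchanged, and $f(0)=1$ pays for $v$); and if $\delta(G)=1$ we take a leaf $v$ with neighbour $w$, noting that either $\{v,w\}$ spans a $K_2$-component (handled directly) or $d(w)\ge 2$, in which case deleting $N[v]=\{v,w\}$ and recursing on $G-N[v]$ works because the incurred ``charge'' is only $f(1)+f(d(w))\le f(1)+f(2)<1$, while the deletion gains~$1$.

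So assume $\delta(G)\ge 2$. Here I would choose a vertex $v$ by an extremal rule --- to first approximation, minimising $d(v)$ and then minimising $\sum_{u\in N[v]}f(d(u))$ --- and delete $N[v]$. Triangle-freeness gives that $N(v)$ is independent, that each $w\in N(v)$ has exactly $d(w)-1$ neighbours in $G':=G-N[v]$, and that a vertex $u\notin N[v]$ has $d_{G'}(u)=d(u)-\abs{N(u)\cap N(v)}$; moreover $G'$ is triangle-free, so induction yields $\alpha(G)\ge 1+\sum_{u\in V(G')}f(d_{G'}(u))$. It therefore suffices to prove the local inequality
\[
  1 + \sum_{u\in V(G')}\Bigl(f\bigl(d_{G'}(u)\bigr)-f\bigl(d(u)\bigr)\Bigr) \;\ge\; f(d(v)) + \sum_{w\in N(v)} f(d(w)).
\]
Every summand on the left vanishes unless $u$ lies at distance exactly $2$ from $v$, and for such $u$ convexity gives $f(d_{G'}(u))-f(d(u)) \ge \abs{N(u)\cap N(v)}\cdot\bigl(f(d(u)-1)-f(d(u))\bigr)$; summing and using $\sum_u\abs{N(u)\cap N(v)}=\sum_{w\in N(v)}(d(w)-1)$ gives a lower bound which, when $G$ is $d$-regular, collapses by the balance form to exactly $1+d(d-1)\bigl(f(d-1)-f(d)\bigr)=(d+1)f(d)$, i.e.\ to the right-hand side.

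The main obstacle is making the local inequality go through for irregular neighbourhoods, where the distance-$2$ gains $f(d(u)-1)-f(d(u))$ can be tiny for vertices $u$ of large degree: this is exactly where the precise shape of $f$ (its convexity and the balance form) is essential, and where the extremal choice of $v$ must be pinned down carefully. In fact one likely needs the extra flexibility of the move ``with probability $q$ take $v$ and delete $N[v]$, otherwise delete only $v$'' and average the two resulting lower bounds on $\alpha(G)$ with a weight $q=q(v)$ adapted to the local degree profile --- deleting $v$ alone drops each neighbour's degree by $1$, contributing a further $\sum_{w\in N(v)}\bigl(f(d(w)-1)-f(d(w))\bigr)$ of slack. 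Choosing $v$ (and $q$) so that the inequality closes uniformly over all triangle-free graphs is the heart of the proof; everything else is bookkeeping.
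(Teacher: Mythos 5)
Note first that the paper does not prove this statement at all --- it is quoted directly from Shearer~\cite{She91} --- so your argument has to stand on its own, and at present it does not: the inductive step in the main case $\delta(G)\ge 2$ is missing, as you yourself acknowledge (``one likely needs the extra flexibility\dots'', ``is the heart of the proof''). What you actually verify is that the delete-$N[v]$ inequality closes \emph{with equality} when the relevant neighbourhood is $d$-regular; this is just the observation that the balance form $(d+1)f(d)=1+d(d-1)\bigl(f(d-1)-f(d)\bigr)$ is a rewriting of the defining recursion. In particular the intended local inequality has zero slack already in the regular case, so all of the difficulty is concentrated exactly where you stop. And the difficulty is real: at a fixed vertex neither pure move need work. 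For a degree-$2$ vertex $v$ whose two neighbours also have degree $2$, deleting $v$ alone has surplus $2\bigl(f(1)-f(2)\bigr)-f(2)=-1/5$, and if the vertices at distance $2$ from $v$ have large degrees, the delete-$N[v]$ move has surplus roughly $1-3f(2)=-1/5$ as well; so the theorem forces a genuinely global choice of $v$ (and, as you suspect, a mixture of the two moves). Your proposal specifies neither the mixing weight $q(v)$ nor any proof that your extremal rule (minimum degree, then minimum $\sum_{u\in N[v]}f(d(u))$) makes the required inequality hold uniformly over triangle-free graphs.

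That existence statement is the entire content of Shearer's theorem: it is precisely where the exact shape of $f$ (the $d^2+1$ denominator) is forced, and Shearer secures it by an averaging argument over the choices rather than by a local extremal selection. What you have --- the induction framework, the easy cases $\delta(G)\le 1$, the monotonicity/convexity of $f$ (themselves only asserted, though these are routine), and the balance identity --- is correct set-up and bookkeeping, but the theorem itself is the unproved step, so this is a plan rather than a proof.
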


\begin{thm}[Shearer, 1991]\label{thm:shearer2}
Set~$f(0)\coloneqq 0$,~$f(1)\coloneqq \frac{4}{7}$, and~$f(d) \coloneqq \frac{1+(d^2-d)f(d-1)}{d^2+1}$ for every integer~$d\ge 2$.
If~$G$ is a graph on~$n$ vertices with degree sequence~$d_1,\dotsc,d_n$ and with
no~$3$-cycle and no~$5$-cycle, then~$G$ contains an independent of size~$\sum_{i=1}^n f(d_i) - \frac{n_{11}}{7}$,
where~$n_{11}$ is the number of pairs of adjacent vertices of degree~$1$ in~$G$.
\end{thm}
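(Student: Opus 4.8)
\emph{Proof idea.}
The plan is to establish the bound by strong induction on $\abs{V(G)}$, following the classical ``branch on a well-chosen vertex'' scheme that underlies \Cref{thm:shearer1}, and exploiting the extra hypothesis that $G$ has no $5$-cycle to control what happens in the second neighbourhood. Write $F(G)\coloneqq\sum_{v\in V(G)}f(\deg_G v)$; the goal is $\alpha(G)\ge F(G)-\tfrac17 n_{11}(G)$, the function $f$ being positive, non-increasing and convex on the positive integers (an easy induction on the recurrence).

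First I would dispose of the degenerate configurations. Since $f(0)=0$ and every maximum independent set contains all isolated vertices, deleting an isolated vertex only weakens the bound, so we may assume that $G$ has minimum degree at least~$1$; since $\alpha$, $F$ and $n_{11}$ are additive over connected components, we may assume $G$ is connected; and the case $G=K_2$ is exactly the equality $1=2f(1)-\tfrac17$. A pendant vertex $v$ with neighbour $u$ of degree $d\ge2$ is handled by the swap $\alpha(G)\ge1+\alpha(G-\{u,v\})$ combined with the induction hypothesis applied to $G-\{u,v\}$ (processing pendant vertices in a suitable order); this reduces to a short numerical inequality involving $f(1)$, $f(d)$ and the values of $f$ at the degrees of the remaining neighbours of $u$, and $f(1)=\tfrac47$ is calibrated precisely so that it holds. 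For the main step we may then assume that $G$ has minimum degree $\delta\ge2$. Fix a vertex $v$ of degree $\delta$, with $N(v)=\{u_1,\dots,u_\delta\}$, and let $B$ be its second neighbourhood (the vertices at distance exactly $2$ from $v$). Triangle-freeness makes $N(v)$ independent, and the absence of $C_5$ makes $B$ independent as well: an edge of $B$ joining some $w_1\in N(u_i)$ to some $w_2\in N(u_j)$ — necessarily with $i\ne j$, as $G$ is triangle-free — would close the $5$-cycle $vu_iw_1w_2u_j$. I would then combine the two elementary bounds $\alpha(G)\ge\alpha(G-v)$ and $\alpha(G)\ge1+\alpha(G-N[v])$ — the latter sharpened, whenever deleting $N[v]$ isolates some vertices of $B$, by also placing those vertices (an independent set) into the solution — into a convex combination with a weight $\lambda=\lambda(\delta)$, apply the induction hypothesis to both graphs, and bound the degree drops at $N(v)$ and at $B$ using the monotonicity and convexity of $f$. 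Everything then collapses to a single numerical inequality in $\delta$, which is tightest on the locally tree-like $\delta$-regular configuration — where $N[v]\cup B$ has exactly $\delta^2+1$ vertices — and there becomes precisely the defining recurrence $(\delta^2+1)f(\delta)=1+\delta(\delta-1)f(\delta-1)$.

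The hard part, as usual in arguments of this type, is the low-degree bookkeeping. After deletions, degree-$2$ vertices can drop to degree~$1$ and degree-$1$ vertices to degree~$0$, and near $\delta=2$ the numerical inequality is on the verge of failing; one therefore has to pin down the correct tie-breaking rule for the choice of $v$ among the minimum-degree vertices (presumably one that maximises the collateral degree reductions caused by deleting $N[v]$), bring in the improvement of grabbing the newly isolated vertices of $B$, and account carefully for the change in $n_{11}$ — where the independence of $B$ is again what keeps the number of new adjacent degree-$1$ pairs under control. All the uses of the no-$C_5$ hypothesis are concentrated in these small-degree cases: it is what upgrades the degree-$1$ value from the triangle-free $\tfrac12$ to $\tfrac47$, with the $-\tfrac17 n_{11}$ term serving as the compensation this forces on $K_2$-components.
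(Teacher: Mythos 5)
This theorem is not proved in the paper at all: it is Shearer's result, quoted verbatim from~\cite{She91} as background for Table~\ref{tab:1}, so there is no in-paper argument to measure your proposal against; it has to stand on its own as a reconstruction of Shearer's proof. As it stands, it does not. What you have written is a plan, and the decisive steps are exactly the ones you defer. The reduction of the pendant-vertex case to ``a short numerical inequality'' that $f(1)=\tfrac47$ is ``calibrated'' to satisfy is asserted, not checked; the main step is compressed into the claim that a convex combination of $\alpha(G)\ge\alpha(G-v)$ and $\alpha(G)\ge1+\alpha(G-N[v])$ ``collapses to a single numerical inequality in $\delta$, which is tightest on the locally tree-like $\delta$-regular configuration'' --- but showing that the regular, tree-like local structure is the worst case is precisely the content of such proofs, and monotonicity plus convexity of $f$ do not deliver it by themselves: the degree drops in the second neighbourhood depend on how many edges go from $N(v)$ to $B$ (multiple attachments through $C_4$'s are allowed here), on which vertices of $B$ become isolated, and on the unspecified weight $\lambda(\delta)$ and tie-breaking rule for choosing $v$, none of which you pin down. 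You yourself flag that near $\delta=2$ the inequality is ``on the verge of failing'' and that the choice rule is ``presumably'' one maximising collateral degree reductions --- that is an acknowledgement that the argument is open at its most delicate point, not a proof of it.

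The $n_{11}$ accounting has the same status. In the induction the term enters with the wrong sign for you: newly created adjacent degree-$1$ pairs in $G-v$ or $G-N[v]$ weaken the induction hypothesis you invoke, so you must bound how many such pairs appear and show the loss of at most $\tfrac17$ per pair is absorbed; saying that the independence of $B$ ``keeps the number of new adjacent degree-$1$ pairs under control'' is a hope, not an estimate (such pairs can arise between $B$ and the third neighbourhood, or inside components detached by the deletion). The sound parts of your sketch --- $N(v)$ and $B$ are independent when $C_3$ and $C_5$ are excluded, the recurrence $(\delta^2+1)f(\delta)=1+\delta(\delta-1)f(\delta-1)$ as the intended tight case, induction with vertex/closed-neighbourhood deletion --- are indeed the standard skeleton of Shearer's argument, but a complete proof requires carrying out the case analysis and the two bookkeeping computations you leave open, or simply citing~\cite{She91} as the paper does.
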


Theorems~\ref{thm:shearer1} and~\ref{thm:shearer2} allow us to compute upper bounds on~$\rho(d,4)$ and on~$\rho(d,6)$ for small values of~$d$, as indicated in Table~\ref{tab:1}. When~$d \ge 5$, these bounds are the best known ones.

\renewcommand{\arraystretch}{1.2}
\begin{table}[!ht]\centering
    \begin{tabular}{@{}crlrl@{}}\toprule
~$d$ & \multicolumn{2}{c}{upper bound on~$\rho(d,4)$} & \multicolumn{2}{c}{upper bound on~$\rho(d,6)$} \\
\midrule
$2$ &~$\frac{5}{2}$ &~$=2.5$ &~$\frac{7}{3}$ &$\approx 2.33333$ \\
$3$ &~$\frac{50}{17}$ &$\approx 2.94118$ &~$\frac{14}{5}$ &$= 2.8$ \\
$4$ &~$\frac{425}{127}$ &$\approx 3.34646$ &~$\frac{119}{37}$ &$\approx 3.21622$ \\
$5$ &~$\frac{2210}{593}$ &$\approx 3.72681$ &~$\frac{3094}{859}$ &$\approx 3.60186$ \\
$6$ &~$\frac{8177}{2000}$ &$\approx 4.0885$ &~$\frac{57239}{14432}$ &$\approx 3.96612$ \\
$7$ &~$\frac{408850}{92177}$ &$\approx 4.43549$ &~$\frac{408850}{94769}$ &$\approx 4.31417$ \\
$8$ &~$\frac{13287625}{2785381}$ &$\approx 4.77049$ &~$\frac{13287625}{2857957}$ &$\approx 4.64934$ \\
$9$ &~$\frac{1089585250}{213835057}$ &$\approx 5.09545$ &~$\frac{1089585250}{219060529}$ &$\approx 4.9739$ \\
$10$ &~$\frac{11004811025}{2033474038}$ &$\approx 5.41183$ &~$\frac{11004811025}{2080503286}$ &$\approx 5.28949$\\ \bottomrule
\end{tabular} 
\vspace{6pt}
    \caption{Upper bounds on~$\rho(d,4)$ and~$\rho(d,6)$ for~$d\le 10$ derived from Theorems~\ref{thm:shearer1} and~\ref{thm:shearer2}.}\label{tab:1}
\end{table}
\renewcommand{\arraystretch}{1}

We are not aware of any non-trivial lower bounds on~$\rho(5,4)$ and~$\rho(6,4)$.
Figure~\ref{graph34} show graphs illustrating that~$\rho(5,4) \ge
\frac{10}{3} \approx 3.33333$ and~$\rho(6,4) \ge \frac{29}{8}=3.625$. These two graphs
are circulant graphs, which are Cayley graphs over~$\mathbb{Z}_n$.

\begin{figure}[!ht]
\centering
\subcaptionbox{A~$5$-regular triangle-free (vertex-transitive) graph with Hall ratio~$\frac{10}{3}$.    It is the Cayley graph over~$\mathbb{Z}_{20}$ with generating set~$\{\pm1,\pm6,10\}$.
    There is no independent set of order~$7$, and the white vertices form an independent set of order~$6$.}[7.5cm]{
\begin{tikzpicture}[scale=.8]

\foreach \k in {9,...,19,1}{
	\draw[thick] (\k*18:3) -- (\k*18+360/20:3);
	\draw[thick] (\k*18:3) -- (\k*18+360*6/20:3);
	\fill[black] (\k*18:3) circle(0.1);
	}

\foreach \k in {0,2,...,12}{
	\draw[thick] (\k*18:3) -- (\k*18+360/20:3);
	\draw[thick] (\k*18:3) -- (\k*18+360*6/20:3);
	\fill[black] (\k*18:3) circle(0.1);
	}

\foreach \k in {0, 1,...,9}{
    \draw[thick] (18*\k:3) to (18*\k+180:3);
	}

\foreach \k in {3,5,7,14,16,18}{
	\draw[thick] (\k*18:3) -- (\k*18+360/20:3);
	\draw[thick] (\k*18:3) -- (\k*18+360*6/20:3);
	\fill[black] (\k*18:3) circle(0.1);
	\fill[white] (\k*18:3) circle(0.08);
	}
\end{tikzpicture}}\hspace*{40pt}
\subcaptionbox{A~$6$-regular triangle-free (vertex-transitive) graph with Hall ratio~$\frac{29}{8}$.
    It is the Cayley graph over~$\mathbb{Z}_{29}$ with generating set~$\{\pm1,\pm5,\pm13\}$.
    There is no independent set of order~$9$, and the white vertices form an independent set of order~$8$.}[7.5cm]{
\begin{tikzpicture}[rotate=-8,scale=.8]
\def\a{360/29}
\foreach \k in {0,1,...,28}{
	\draw[thick] (\k*\a:3) to (\k*\a+360/29:3);
	\draw[thick] (\k*\a:3) to[out=\k*\a+160,in=\k*\a+360*5/29+200] (\k*\a+360*5/29:3);
	\draw[thick] (\k*\a:3) -- (\k*\a+360*13/29:3);
	\fill[black] (\k*\a:3) circle(0.1);
	}

\foreach \k in {8,10,...,22}{
	\fill[white] (\k*\a:3) circle(0.08);
	}
\end{tikzpicture}}
\caption{Two possibly extremal regular triangle-free graphs for the Hall ratio.}\label{graph34}
\end{figure}

The value of~$\rho(3,g)$ has also been studied when~$g$ goes to infinity.
Kardoš, Král' and Volec~\cite{KKV11} proved the existence of an integer~$g_0$
such that~$\rho(3,g_0) \le 2.2978$. This result has been improved by Hoppen and Wormald~\cite{HoWo18} to~$\rho(3,g_0) \le 2.2854$. More strongly, these upper bounds hold for
the fractional chromatic number of every (sub)cubic graph of girth at
least~$g_0$. In the other direction, Bollobás~\cite{Bol81} proved a general
lower bound on~$\rho(d,g)$.

\begin{thm}[Bollobás, 1981]\label{thm-bol}
    Let~$d\ge3$. Let~$\alpha$ be a real number in~$(0,1)$ such that
    \[\alpha(d\ln 2-\ln(\alpha))+(2-\alpha)(d-1)\ln(2-\alpha)+(\alpha-1)d\ln(1-\alpha)<2(d-1)\ln 2.\]
    For every integer~$g$, there exists a~$d$-regular graph with
    girth at least~$g$ and Hall ratio more than~$2/\alpha$.
\end{thm}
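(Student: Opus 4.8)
The plan is to use the probabilistic method in the \emph{configuration model} (pairing model) $\mathcal{G}(n,d)$ for $n$ large (with $dn$ even): one picks a uniformly random perfect matching on $dn$ points partitioned into $n$ cells of size $d$, and lets $G$ be the resulting $d$-regular multigraph on the cells. I would intersect two events. On the one hand, the numbers of cycles of each fixed length $i$ converge jointly to independent Poisson variables with means $(d-1)^i/(2i)$; since a pairing multigraph has girth at least $g$ exactly when it has no cycle of length at most $g-1$ (loops and repeated edges being cycles of lengths $1$ and $2$), which in particular forces simplicity for $g\ge3$, this gives $\mathbb{P}\!\left[G \text{ simple and } \girth(G) \ge g\right] \to c_0 := \exp\!\left(-\sum_{i=1}^{g-1} \frac{(d-1)^i}{2i}\right) > 0$. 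On the other hand --- the heart of the matter --- I would show that, under the displayed hypothesis on $\alpha$, the probability that $\alpha(G) \ge \alpha n/2$ tends to $0$. Combining the two, for $n$ large there is a simple $d$-regular graph $G$ of girth at least $g$ with $\alpha(G) < \alpha n/2$, and then $\rho(G) \ge \abs{V(G)}/\alpha(G) > n/(\alpha n/2) = 2/\alpha$, as wanted.

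For the second event I would run a first-moment computation. For an integer $k \le n/2$, the expected number $\mathbb{E}[X_k]$ of $k$-sets $S$ of cells with no matched pair inside $S$ is obtained by choosing $S$, injecting its $dk$ points into the $d(n-k)$ points outside $S$, and matching the remaining $d(n-2k)$ points freely; this gives $\mathbb{E}[X_k] = \binom{n}{k}\,\frac{(d(n-k))!\,\left(d(n-2k)-1\right)!!}{\left(d(n-2k)\right)!\,(dn-1)!!}$, while $X_k = 0$ for $k > n/2$. Stirling's formula with $k = \beta n$ gives $\tfrac1n \ln \mathbb{E}[X_{\beta n}] \to g(\beta)$, where
\[ g(\beta) \coloneqq -\beta\ln\beta - (1-\beta)\ln(1-\beta) + d(1-\beta)\ln(1-\beta) - \tfrac{d}{2}(1-2\beta)\ln(1-2\beta). \]
A short calculation yields $g'(\beta) = \ln\frac{(1-2\beta)^d}{\beta(1-\beta)^{d-1}}$, so $g$ is strictly increasing then strictly decreasing on $(0,\tfrac12)$; since $g(0^+)=0$ and $g(\tfrac12) = (1-\tfrac d2)\ln 2 < 0$ for $d \ge 3$, the function $g$ has a positive maximum and a unique further zero $\beta^\star \in (0,\tfrac12)$, and is negative precisely on $(\beta^\star, \tfrac12]$. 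The crucial algebraic check is then that substituting $\beta = \alpha/2$ (so $1-\beta = (2-\alpha)/2$ and $1-2\beta = 1-\alpha$) and clearing the logarithms of $2$ turns $g(\alpha/2) < 0$ into exactly the hypothesis $\alpha(d\ln 2 - \ln\alpha) + (2-\alpha)(d-1)\ln(2-\alpha) + (\alpha-1)d\ln(1-\alpha) < 2(d-1)\ln 2$. Under this hypothesis $\alpha/2$ lies beyond the maximum of $g$, so $g \le g(\alpha/2) < 0$ on all of $[\alpha/2, \tfrac12]$, hence $\sum_{k \ge \alpha n/2} \mathbb{E}[X_k] \le (n+1)\max_{\beta \in [\alpha/2,1/2]} e^{(g(\beta)+o(1))n} \to 0$, and Markov's inequality gives $\mathbb{P}[\alpha(G) \ge \alpha n/2] \to 0$.

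I expect the routine probabilistic inputs --- the Poisson behaviour of short cycles in $\mathcal{G}(n,d)$ and the positive limiting probability of simplicity --- to be quotable from the literature, so the real work lies in the second paragraph: getting the closed form for $\mathbb{E}[X_k]$ correct, carrying it through Stirling to identify $g$, and (the genuine subtlety) arguing that the single exponent $g(\alpha/2)$ controls the entire tail $\sum_{k \ge \alpha n/2}\mathbb{E}[X_k]$. That last point is precisely why the monotonicity analysis of $g$ matters: one must know that the hypothesis places $\alpha/2$ on the decreasing branch of $g$, past its maximum, rather than merely below the level $0$ somewhere.
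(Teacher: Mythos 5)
The paper does not prove this statement at all: it is quoted verbatim from Bollobás's 1981 paper, so there is no internal proof to compare against. Your reconstruction is essentially Bollobás's original argument — first moment for independent sets in the configuration model, combined with the positive limiting probability of having no cycles shorter than $g$ — and I checked the details: the closed form for $\mathbb{E}[X_k]$ is right, the Stirling limit does give your $g(\beta)$ (the $\ln(dn)$ terms cancel), $g'(\beta)=\ln\bigl((1-2\beta)^d/(\beta(1-\beta)^{d-1})\bigr)$ is correct, and substituting $\beta=\alpha/2$ and clearing the $\ln 2$ terms does turn $g(\alpha/2)<0$ into exactly the displayed hypothesis, so the whole scheme goes through. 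Two spots deserve one more line each. First, ``so $g$ is strictly increasing then strictly decreasing'' does not follow from the formula for $g'$ alone; justify it by noting $g''(\beta)=-\tfrac{2d}{1-2\beta}-\tfrac1\beta+\tfrac{d-1}{1-\beta}<0$ on $(0,\tfrac12)$ (since $\tfrac{d-1}{1-\beta}<\tfrac{2d}{1-2\beta}$), so $g'$ decreases from $+\infty$ to $-\infty$ and changes sign once. Second, summing $\mathbb{E}[X_k]$ over all $k\ge \alpha n/2$ requires the $o(1)$ in the exponent to be uniform in $\beta$ up to $\tfrac12$, where $d(n-2k)$ may be bounded; this is fine with non-asymptotic Stirling bounds, but it is cleaner to observe that an independent set of size at least $k_0=\lceil \alpha n/2\rceil$ contains one of size exactly $k_0$, so Markov applied to the single term $\mathbb{E}[X_{k_0}]\le e^{(g(\alpha/2)+o(1))n}$ already suffices, and then the monotonicity of $g$ past $\beta^\star$ is not even needed — only $g(\alpha/2)<0$. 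The Poisson limit for short cycle counts (including loops and double edges) and the resulting constant $c_0>0$ are indeed quotable from Bollobás (1980) or Wormald, so the proof is complete as proposed.
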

Theorem~\ref{thm-bol} allows us to compute lower bounds on~$\rho(d, \infty)$
for any value of~$d$, the smaller ones being presented in Table~\ref{tab:2}.
All these values can be generalised into a looser but asymptotically equivalent
general lower bound of~$d/(2\ln d)$~\cite[Corollary~3]{Bol81}.

\begin{table}[!ht]\centering
\begin{tabular}{@{ }cc@{ }}
\toprule
$d$ & lower bound on~$\rho(d,\infty)$ \\
\midrule
$2$ &~$2$ \\
$3$ &~$2.17835$ \\
$4$ &~$2.3775$ \\
$5$ &~$2.57278$ \\
$6$ &~$2.76222$\\
$7$ &~$2.94606$\\
$8$ &~$3.1249$\\
$9$ &~$3.29931$ \\
$10$ &~$3.46981$ \\
$d$ &~$d/(2\ln d)$\\
\bottomrule
\end{tabular}
\vspace{6pt}
    \caption{Lower bounds on~$\rho(d,\infty)$ implied by Theorem~\ref{thm-bol}.}\label{tab:2}
\end{table}
\vspace{-12pt}

\subsection{Previous results on the fractional chromatic number}
Recently, Molloy~\cite{Mol19} proved the best known extremal upper bounds
for the chromatic number of graphs of given clique number and maximum degree.

\vbox{%
\begin{thm}[Molloy, 2019]\label{thm-molloy}
Let~$G$ be a graph of maximum degree~$\Delta$.
\begin{itemize}
\item If~$G$ is triangle-free, then for every~$\eps>0$, there exists~$\Delta_\eps$ such that, assuming that~$\Delta \ge \Delta_\eps$,
\[ \chi(G) \le (1+\eps)\frac{\Delta}{\ln \Delta}.\]
\item If~$G$ has clique number~$\omega(G) > 2$, then 
\[ \chi(G) \le 200\omega(G) \frac{\Delta \ln \ln \Delta}{\ln \Delta}. \]
\end{itemize}
\end{thm}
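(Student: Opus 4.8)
The plan is to prove the stronger statement for the list chromatic number, by means of an iterative (``semi-random'', or nibble) colouring procedure whose analysis is powered by the Lovász Local Lemma. Fix $\eps>0$, set $q \coloneqq (1+\eps)\Delta/\ln\Delta$, and give each vertex an arbitrary colour list $L(v)$ with $|L(v)| = q$. The procedure runs in rounds; in each round every still-uncoloured vertex is activated with a suitably chosen probability, each activated vertex chooses a colour uniformly at random from its current list and keeps it if and only if no neighbour keeps the same colour, and then every vertex deletes from its list every colour that has been permanently assigned to one of its neighbours. For a vertex $v$ I would track, after round $i$, the size $\ell_i(v)$ of its residual list and the number $d_i(v)$ of its still-uncoloured neighbours. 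The aim is to show that, with positive probability, there is a round $i = O(\ln\Delta)$ after which $\ell_i(v) > d_i(v)$ for every vertex $v$; a greedy completion then finishes the colouring.

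The crux is the per-round bookkeeping, and this is where triangle-freeness is used in an essential way. A colour $c$ leaves the list $L_i(v)$ only when some neighbour of $v$ permanently keeps $c$; but since $N(v)$ is an independent set, several neighbours of $v$ may keep $c$ at the same time, and with only $q \approx \Delta/\ln\Delta$ colours available there are, typically, about $\ln\Delta$ neighbours of $v$ competing for each colour. Consequently a short computation shows that, in each round, the proportion of $L_i(v)$ that disappears is roughly $1/\ln\Delta$ times the proportion of uncoloured neighbours of $v$ that become coloured. So, over the whole procedure, $d_i(v)$ shrinks geometrically --- fast enough to fall from $\Delta$ down below $q$ --- while $\ell_i(v)$ loses only a $(1-o(1))$ factor and stays close to $q$; it is exactly the choice $q = (1+\eps)\Delta/\ln\Delta$ that makes these two rates differ by the required factor of order $\ln\Delta$.

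To turn this into a rigorous argument I would show, via Talagrand's inequality (or a bounded-differences martingale), that $\ell_{i+1}(v)$ and $d_{i+1}(v)$ concentrate around their conditional expectations, all the relevant random choices lying within distance two of $v$; the bad event ``some invariant fails at $v$ in round $i$'' then lives in a bounded-degree dependency graph, so the Local Lemma applies round by round with exponential slack, and iterating across the $O(\ln\Delta)$ rounds (with the union over rounds absorbed into the slack) produces a valid run of the procedure, after which the greedy finish yields the triangle-free bound. For the bound when $\omega(G) > 2$, one runs an analogous procedure, exploiting now that $G[N(v)]$ is $K_{\omega(G)}$-free rather than edgeless; this weaker local structure is precisely what costs the extra factor of order $\ln\ln\Delta$ when bounding how fast colours leave $L_i(v)$.

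I expect the main difficulty to be the single-round list-shrinkage estimate with the \emph{sharp} leading constant: extracting $(1+\eps)$ rather than some larger constant forces one to use triangle-freeness essentially optimally --- the near-independence of the colours kept on $N(v)$ --- and one is also obliged to halt the iteration while $\ell_i(v)$ and $d_i(v)$ are still polynomially large in $\Delta$, for otherwise concentration, and hence the Local Lemma step, breaks down. This enforced early stopping is the ``wasteful'' feature of the method and the reason it delivers $(1+\eps)\Delta/\ln\Delta$ rather than the conjectured $\Delta/\ln\Delta$; an alternative implementation that avoids concentration by bounding the number of proper list-colourings through a Rosenfeld-style counting recursion runs into the same barrier at the per-vertex step.
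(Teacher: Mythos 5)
First, a point of context: the paper does not prove this statement at all --- it is quoted verbatim from Molloy's 2019 paper (cited as \cite{Mol19}) and used as background, so there is no internal proof to compare against. Judged on its own merits, your sketch is not Molloy's argument: his proof is a \emph{one-shot} application of the Lov\'asz Local Lemma to a single random partial proper colouring, with the heart of the proof being a counting/entropy-style estimate on the number of extensions of a partial colouring (how many colours remain available at a vertex versus how many uncoloured neighbours it has), followed by a greedy completion; there is no iteration, no round-by-round Talagrand concentration, and no nibble. What you describe is the older Kim--Johansson semi-random (nibble) scheme.

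That difference matters, because the step you yourself flag as the ``main difficulty'' is a genuine gap rather than a technicality. Your bookkeeping claim --- that triangle-freeness alone makes the colours kept on $N(v)$ nearly independent, so each round removes only about a $1/\ln\Delta$ fraction of $L_i(v)$ per fraction of newly coloured neighbours --- is exactly what fails in general triangle-free graphs. Triangle-freeness makes $N(v)$ an independent set, but it does not prevent two neighbours of $v$ from sharing many common neighbours (four-cycles are allowed), so the residual lists and the kept colours of the vertices in $N(v)$ can be strongly positively correlated; this is why Kim's nibble analysis requires girth at least $5$, why Johansson's triangle-free argument needed heavy entropy bookkeeping and only gave a constant around $9$, and why the best constant ever extracted from the iterative method for triangle-free graphs was $4+o(1)$ (Pettie--Su). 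Obtaining the sharp $(1+\eps)$ constant was precisely the breakthrough of Molloy's non-iterative argument, so the plan as written cannot be completed without a substantially new idea at the single-round estimate. (Two smaller remarks: your closing claim that a Rosenfeld-style counting implementation ``runs into the same barrier'' is not accurate --- such counting proofs of the $(1+o(1))\Delta/\ln\Delta$ bound do exist, e.g.\ Martinsson's and Hurley--Pirot's simplifications --- and the second bullet, the $200\,\omega\,\Delta\ln\ln\Delta/\ln\Delta$ bound, is asserted rather than argued; in Molloy's paper it requires its own analysis, not a one-line adaptation.)
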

}

The first bound is sharp up to a multiplicative factor in a strong sense, since
as shown by Bollobás~\cite[Corollaries~3 and~4]{Bol81} for all integers~$g$
and~$\Delta\ge3$ there exists a graph with maximum degree~$\Delta$, girth at least~$g$
and chromatic number at least~$\frac{\Delta}{2\ln \Delta}$.

\raggedbottom
There remains however a substantial range of degrees not concerned by the bound
for triangle-free graphs given by~Theorem~\ref{thm-molloy}, namely
when~$\Delta$ is smaller than~$\Delta_\eps$, which is larger
than~$20^{2/\varepsilon}$.
Determining the maximum value of~$\chi_f$ among triangle-free graphs of maximum degree~$3$ has been a long
standing open problem, before it was settled~\cite{DSV14}. The authors showed it to be equal to~$\rho(3,4)$,
namely~$14/5$. The same question for larger values of the maximum degree is still open; for graphs of maximum
degree~$4$ that value lies between~$3.25$ and~$3.5$.  To this date, the best known upper bound in
terms of clique number and maximum degree (when those two parameters are not too far apart) for the \emph{fractional} chromatic
number\footnote{\vtop{For the chromatic number, the reader is referred to a
nice theorem of Kostochka~\cite{Kos76}, which for instance implies that every
graph with maximum degree at most~$5$ and girth at least~$35$ has chromatic
number at most~$4$ (Corollary~2 in \emph{loc.\ cit.}).  The general upper bound
on the chromatic number guaranteed by Kostochka's theorem is never less than
the floor of half the maximum degree plus two.}} is due to Molloy and
Reed~\cite[Theorem~21.7, p.~244]{MoRe02}.

\begin{thm}[Molloy and Reed, 2002]\label{thm:Reedsbound}
For every graph~$G$,
\[ \chi_f(G) \le \frac{\omega(G)+\Delta(G)+1}{2}. \]
\end{thm}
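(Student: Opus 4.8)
\newpar{Overview}
The plan is to pass to the linear-programming dual of the fractional-colouring programme. By LP duality, $\chi_f(G)\le k$ if and only if for every weight function $p\colon V(G)\to\mathbb{R}_{\ge0}$ there is an independent set $I$ of $G$ with $p(I)\ge\frac1k\sum_{v}p_v$; equivalently, there is a probability distribution over the independent sets of $G$ under which each vertex $v$ satisfies $\pr{v\in I}\ge 1/k$. Taking $k\coloneqq\frac{\omega(G)+\Delta(G)+1}{2}$, the goal is to produce such a distribution. Two cheap candidates are instructive: a greedy random independent set --- scan a uniformly random permutation of $V(G)$ and keep a vertex whenever none of its earlier-scanned neighbours was kept --- puts $v$ in $I$ with probability at least $\frac{1}{d(v)+1}\ge\frac{1}{\Delta(G)+1}$, while any optimal proper colouring witnesses $\chi_f(G)\le\chi(G)$. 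Neither attains $k$: the content of the theorem is to improve on $\Delta(G)+1$ by an amount controlled by $\omega(G)$, so the clique structure of the neighbourhoods must genuinely be exploited.

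\newpar{An induction by vertex deletion}
I would argue by induction on $\abs{V(G)}$, in fact establishing the local refinement $\chi_f(G)\le\max_{v\in V(G)}\tfrac{\omega_v+d(v)+1}{2}$, where $\omega_v$ denotes the largest order of a clique of $G$ containing $v$; this implies the statement because $\omega_v\le\omega(G)$ and $d(v)\le\Delta(G)$. Let $k$ be this local maximum, pick a vertex $v$, and apply the induction hypothesis to $G-v$ to get a fractional colouring $(w_I)_{I\in\cI(G-v)}$ of weight $k'\le k$. To reinsert $v$, write $\beta\coloneqq\sum_{I\cap N(v)=\varnothing}w_I$ for the weight carried by independent sets of $G-v$ avoiding $N(v)$. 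If $\beta\ge1$, replace each such $I$ by $I\cup\{v\}$ (still independent in $G$): this covers $v$ with weight $\beta\ge1$, leaves every other coverage and the total weight unchanged, so $\chi_f(G)\le k'\le k$. If $\beta<1$, keep those $\beta$ enlarged sets, and for each $I$ meeting $N(v)$ split its weight in the ratio $(1-t):t$ between $I$ itself and $(I\setminus N(v))\cup\{v\}$ (also independent in $G$), with $t\coloneqq\frac{1-\beta}{k'-\beta}\in[0,1]$ chosen so that $v$ ends up covered; the total weight is still $k'$, every vertex outside $N(v)$ keeps its coverage, and each vertex of $N(v)$ now carries weight at least $1-t$.

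\newpar{Where the clique number enters, and the main obstacle}
The deficit on $N(v)$ is made up by adjoining $t$ times a fractional colouring of $G[N(v)]$. This is exactly where $\omega(G)$ does its work: adding $v$ to a clique of $G[N(v)]$ yields a clique of $G$, so $G[N(v)]$ has clique number at most $\omega_v-1$, and by induction $\chi_f(G[N(v)])\le\frac{\omega_v+\Delta(G)-1}{2}$, well below the $\Delta(G)+1$ that a generic graph of that maximum degree would require. The crux, and the step I expect to be hardest, is to verify the resulting inequality $k'+t\cdot\chi_f(G[N(v)])\le k$: since a priori $k'$ can be as large as $k$ while $t$ need not be small, this does not follow from the outline as stated. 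Making it go through requires extra care --- choosing $v$ and the colouring of $G-v$ so as to keep $\beta$, hence $t$, small, or iterating the repair so that the added weights telescope, or, in Reed's style, first splitting $V(G)$ into a sparse part where a Caro--Wei/greedy gain over $\Delta(G)+1$ is directly available and a dense part that is nearly partitioned into near-$\omega(G)$ cliques and coloured economically. It is precisely this balancing that produces the average of $\omega(G)$ and $\Delta(G)+1$ with the factor $\tfrac12$; carried out in full it is the argument of Molloy and Reed~\cite[Theorem~21.7]{MoRe02}, and it is essential that the quantity bounded is $\chi_f$ rather than $\chi$, the integral analogue being Reed's still-open conjecture.
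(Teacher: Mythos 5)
Your reduction to exhibiting a distribution on independent sets covering each vertex with probability $1/k$, and the reinsertion construction itself (the $(1-t):t$ splitting with $t=\frac{1-\beta}{k'-\beta}$, which indeed restores coverage of $v$ and preserves coverage off $N(v)$), are sound. But the proof does not close, as you acknowledge: the final accounting needs $k'+t\cdot\chi_f(G[N(v)])\le k$, and this is false in general. Nothing prevents $\beta=0$ and $k'=k$, in which case $t=1/k'$ while repairing the deficit of $t$ on every vertex of $N(v)$ costs at least $t$ times a fractional cover of $G[N(v)]$, i.e.\ up to about $t\cdot\frac{\omega_v+d(v)-1}{2}\approx 1-\frac1k$, far exceeding the available slack $k-k'=0$. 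The remedies you gesture at (choosing $v$ and the colouring of $G-v$ so that $\beta$ is small, telescoping the repairs, or a sparse/dense decomposition ``in Reed's style'') are exactly the missing substance of the theorem, and deferring to ``the argument of Molloy and Reed'' at that point means the proposal does not contain a proof; the vertex-deletion induction with local repair, as stated, cannot be balanced.

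For contrast, the paper's argument avoids any global bookkeeping of this kind: it feeds the greedy fractional colouring algorithm (Lemma~\ref{algo}) with the uniform distribution on \emph{maximum} independent sets of each induced subgraph $H$, and verifies a single local inequality, namely $\frac{\omega(v)+1}{2}\pr{v\in\bI_H}+\frac12\esp{\abs{N(v)\cap\bI_H}}\ge1$ for every $v\in V(H)$. This is checked by conditioning on the realisation of $\bI_H$ outside $N[v]$ (the spatial Markov property), after which the surviving part $W\subseteq N[v]$ either is a clique of size at most $\omega(v)$ --- so exactly one of its vertices lies in $\bI_H$, uniformly --- or is not a clique, in which case maximality of $\bI_H$ forces at least two neighbours of $v$ into it. That dichotomy is where the clique number actually enters, playing the role your repair step was meant to play; to salvage your plan you would need an analogous local inequality (or the full machinery of Molloy--Reed), neither of which is present in the proposal.
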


If one considers a convex combination of the clique number and the maximum
degree plus one for an upper bound on the (fractional) chromatic number of a graph, then
because the chromatic number of a graph never exceeds its maximum degree plus one, the
aim is to maximise the coefficient in front of the clique number. The convex
combination provided by Theorem~\ref{thm:Reedsbound} (which is conjectured to
hold, after taking the ceiling, also for the chromatic number), is best
possible. Indeed, for every positive integer~$k$ the graph~$G_k \coloneqq C_5
\boxtimes K_k$ is such that~$\omega(G_k)=2k, \Delta(G_k)=3k-1,
\chi_f(G_k)=\frac{5k}{2} = \frac{\omega(G_k)+\Delta(G_k)+1}{2}$.

A local form of \Cref{thm:Reedsbound} exists: it was first devised by McDiarmid
(unpublished) and appears as an exercise in Molloy and Reed's
book~\cite{MoRe02}. A published version is found in the Ph.\@D.\@ thesis of Andrew
King~\cite[Theorem~2.10, p.~12]{Kin09}.  

\begin{thm}[McDiarmid, unpublished]\label{thm-McDiarmid}
Let~$G$ be a graph, and set~$f_G(v) \coloneqq
    \frac{\omega_G(v)+\deg_G(v)+1}{2}$ for every~$v\in V(G)$,
    where~$\omega_G(v)$ is the order of a largest clique in~$G$ containing~$v$.
    Then
    \[ \chi_f(G) \le \max\sst{f_G(v)}{v\in V(G)}. \]
\end{thm}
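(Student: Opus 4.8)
The statement to prove is \Cref{thm-McDiarmid}, the local (weighted) version of the Molloy--Reed fractional bound.

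\begin{proof}[Proof plan]
The plan is to deduce the local statement from the global one (\Cref{thm:Reedsbound}) by a standard ``blow-up and padding'' argument, exploiting the fact that fractional colourings behave well under lexicographic products and that the quantity $f_G(v)=\frac{\omega_G(v)+\deg_G(v)+1}{2}$ is, for each fixed $v$, controlled by a graph in which $v$ sits in a clique of size $\omega_G(v)$ and has degree $\deg_G(v)$. First I would recall the two facts I intend to use without reproving them in detail: (i) $\chi_f(H)\le t$ if and only if there is a homomorphism from $H$ to some Kneser graph $K_{a:b}$ with $a/b\le t$, equivalently $\chi_f$ is the infimum of $a/b$ over fractional $b$-fold colourings with $a$ colours; and (ii) $\chi_f$ is multiplicative/monotone under lexicographic product, and in particular $\chi_f(G)\le \chi_f(G\cdot K_\ell)/\ell$ would go the wrong way, so instead one wants $\chi_f(G)\le \chi_f(G')$ whenever $G$ is an induced subgraph of $G'$, together with a way to build a $G'$ of bounded max-degree-plus-clique.

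The key construction is as follows. Let $M\coloneqq \max_v f_G(v)$. For a large integer $N$ (which will tend to infinity), replace each vertex $v$ of $G$ by an independent set $B_v$ of size roughly proportional to how much ``slack'' $v$ has, i.e.\ choose $|B_v|$ so that after the blow-up every vertex of the new graph $G^{(N)}$ lies in a clique of size at most (about) $N\cdot\omega_G(v)/M\cdot(\text{something})$ — more precisely, I would pad each blown-up vertex-class and each blown-up edge with dummy cliques so that in $G^{(N)}$ every vertex has $\omega_{G^{(N)}}(v)+\deg_{G^{(N)}}(v)+1$ at most $2NM + o(N)$, uniformly. Concretely: take the lexicographic product $G[\overline{K_N}]$ (replace each vertex by $N$ non-adjacent copies, each edge by a complete bipartite graph); then a vertex in class $B_v$ has degree exactly $N\deg_G(v)$ and lies in a clique of size $N\omega_G(v)$, so $\omega+\deg+1 = N(\omega_G(v)+\deg_G(v))+1 \le 2NM - N + 1$. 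Hence $\max_u f_{G[\overline{K_N}]}(u) \le NM - \tfrac{N-1}{2} < NM$, which is even better than needed, and \emph{uniform} over all vertices. Wait — this already gives, via the global bound, $\chi_f(G[\overline{K_N}]) \le NM$, and since $\chi_f(G[\overline{K_N}]) = \chi_f(G)$ (blowing up by independent sets does not change the fractional chromatic number), we would be done with room to spare. So the real subtlety is only that \Cref{thm:Reedsbound} is stated with the \emph{global} $\omega(G)$ and $\Delta(G)$, not vertex-wise; but after the lexicographic product with $\overline{K_N}$ the graph is ``locally uniform'' enough that $\omega(G[\overline{K_N}]) = N\omega(G)$ and $\Delta(G[\overline{K_N}]) = N\Delta(G)$, and $\frac{\omega(G)+\Delta(G)+1}{2}$ need \emph{not} be close to $M$. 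So the blow-up must be weighted: give vertex $v$ a class $B_v$ of size $c_v$ with the $c_v$ chosen (integers after clearing denominators) so that the resulting graph has a near-uniform value of $\tfrac{\omega+\deg+1}{2}$.

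The heart of the argument is therefore the choice of the weights $c_v$. I would pick $c_v$ so that every vertex $u\in B_v$ in the weighted lexicographic blow-up, after adding to $B_v$ the appropriate padding clique to make the local clique number come out right, satisfies $\omega(u)+\deg(u)+1 \le 2M\cdot(\sum_w c_w)/|V(G)| + o(\sum c_w)$; a clean way is to set $c_v$ proportional to $1$ and then pad: into each blown-up edge $\{u,u'\}$ with $u\in B_v, u'\in B_{v'}$ insert nothing, but into each class $B_v$ insert a clique $Q_v$ of size $2M - \omega_G(v) - \deg_G(v) - 1 \ge 0$, fully joined to $B_v$, so that now $\omega(u) = \omega_G(v) + |Q_v|$... this needs $\omega_G(v)$ to be realised by a clique disjoint-ish from things, which requires care, and replacing each vertex by $N$ copies first so that there is ``room'' to attach $\Theta(N)$-size cliques and keep the max degree at $2MN+o(N)$ while the clique number is also $2MN+o(N)$. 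Running the global \Cref{thm:Reedsbound} on this padded blow-up $G'_N$ gives $\chi_f(G'_N)\le MN+o(N)$, and $\chi_f(G) = \chi_f(G[\overline{K_N}]) \le \chi_f(G'_N)$ since $G[\overline{K_N}]$ is an induced subgraph of $G'_N$; dividing by $N$ and letting $N\to\infty$ yields $\chi_f(G)\le M$. The main obstacle is engineering the padding so that both $\omega(G'_N)$ and $\Delta(G'_N)$ are simultaneously $2MN+o(N)$ \emph{and} no vertex's local parameter exceeds this — i.e.\ making the global parameters of $G'_N$ match the worst local parameter of $G$ without overshooting; bookkeeping the clique incidences (a padding clique attached at $v$ must not inflate $\deg$ or $\omega$ at a neighbour) is the delicate part, handled by the $N$-fold blow-up giving linear slack everywhere.
\end{proof}
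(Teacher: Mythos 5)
Your plan has a fatal scaling error, and even after fixing it the reduction cannot work in principle. With the independent-set blow-up $G[\overline{K_N}]$ you have $\chi_f(G[\overline{K_N}])=\chi_f(G)$, a quantity that does not grow with $N$; so from $\chi_f(G)\le\chi_f(G'_N)\le MN+o(N)$, ``dividing by $N$ and letting $N\to\infty$'' yields only the vacuous statement $0\le M$, not $\chi_f(G)\le M$. (Also, in $G[\overline{K_N}]$ a vertex of $B_v$ lies in a clique of size $\omega_G(v)$, not $N\omega_G(v)$: substituting independent sets does not blow up cliques, so your computation of $\omega+\deg+1$ there is wrong.) To get the right scaling you would need the clique blow-up $G[K_N]$, for which $\chi_f(G[K_N])=N\chi_f(G)$. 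But then the whole strategy collapses for a structural reason: both $\omega$ and $\Delta$ are monotone under taking supergraphs, so any padded graph $G'_N$ containing $G[K_N]$ satisfies $\omega(G'_N)\ge N\omega(G)$ and $\Delta(G'_N)\ge N\Delta(G)+N-1$, hence $\tfrac{\omega(G'_N)+\Delta(G'_N)+1}{2}\ge N\cdot\tfrac{\omega(G)+\Delta(G)+1}{2}\ge NM$, and the last inequality is strict precisely in the cases where \Cref{thm-McDiarmid} improves on \Cref{thm:Reedsbound} (when the largest clique and the largest degree are not realised at the same vertex). Padding cliques can only increase $\omega$ and $\Delta$; it can never ``relocate'' them. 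So applying the global Reed bound to any supergraph of any blow-up of $G$ can never certify more than the global bound $\tfrac{\omega(G)+\Delta(G)+1}{2}$ for $G$ itself: the local statement is genuinely stronger and is not obtainable by this monotone reduction. (Your own bookkeeping already betrays this: you engineer $G'_N$ so that $\omega(G'_N)$ and $\Delta(G'_N)$ are each about $2MN$, which makes the global bound give $2MN+o(N)$, not $MN+o(N)$.)

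The paper's proof goes a different way and does not pass through \Cref{thm:Reedsbound} at all. It runs the greedy fractional colouring algorithm (\Cref{algo}) with the uniform distribution on \emph{maximum} independent sets of each induced subgraph $H$, and verifies, for every $v\in V(H)$, the local inequality $\tfrac{\omega(v)+1}{2}\,\mathbb{P}[v\in\bI_H]+\tfrac12\,\mathbb{E}\bigl[\lvert N(v)\cap\bI_H\rvert\bigr]\ge 1$. This is done by conditioning on the realisation of $\bI_H$ outside $N[v]$ (the Spatial Markov Property) and distinguishing two cases for the uncovered part $W$ of $N[v]$: if $W$ is a clique of size $k\le\omega(v)$, exactly one of its vertices is chosen, each with probability $1/k$; otherwise $\bI_H$ being maximum forces at least two neighbours of $v$ into $\bI_H$. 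If you want a correct proof of the local bound, that probabilistic/greedy route (or King's argument via Lemma~2.11 of~\cite{Kin09}) is the one to follow; a deduction from the global bound by blow-ups is not available.
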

\noindent
In Subsection~\ref{sub-reed}, we slightly strengthen the local property of
\Cref{thm-McDiarmid} as a way to illustrate the arguments used later on.

\subsection{Our results}

Our first contribution is to establish a (non-explicit) formula for an upper
bound on the fractional chromatic number of triangle-free graphs depending on
their maximum degree~$\Delta$. The upper bound which can be effectively
computed from this formula improves on the one which can be derived from
\Cref{thm-McDiarmid} as soon as~$\Delta \ge 17$.

\begin{table}[!ht]\centering
\begin{tabular}{@{ }ccccccc@{ }}
\toprule
~$\Delta(G)$&\phantom{abc}&$k$&\phantom{abc}&$\lambda$&\phantom{abc}& upper bound on~$\chi_f(G)$\\
\midrule
~$1 \dotso 16$&\phantom{abc} &~$2$&\phantom{abc} &~$\infty$ &\phantom{abc}&$\frac{\Delta(G)+3}{2}$ \\
~$17$ &\phantom{abc}&~$3$ &\phantom{abc}&~$3.41613$ &\phantom{abc} &~$9.91552$  \\
~$18$ &\phantom{abc} &~$3$ &\phantom{abc} &~$3.50195$ &\phantom{abc} &~$10.3075$  \\
~$19$ &\phantom{abc} &~$3$ &\phantom{abc} &~$3.58603$ &\phantom{abc} &~$10.6981$  \\
~$20$ &\phantom{abc} &~$3$ &\phantom{abc} &~$3.66847$ &\phantom{abc} &~$11.0875$  \\
~$50$ &\phantom{abc} &~$4$ &\phantom{abc} &~$2.04455$ &\phantom{abc} &~$ 22.1644$  \\
~$100$ &\phantom{abc} &~$5$ &\phantom{abc} &~$1.48418~$ &\phantom{abc} &~$ 38.0697$  \\
~$200$ &\phantom{abc} &~$6$ &\phantom{abc} &~$1.24061~$ &\phantom{abc} &~$66.151~$  \\
~$500$ &\phantom{abc} &~$8$ &\phantom{abc} &~$0.915598~$ &\phantom{abc} &~$ 139.842$  \\
~$1000$ &\phantom{abc} &~$10$ &\phantom{abc} &~$0.734978~$ &\phantom{abc} &~$ 249.058$  \\
\bottomrule
\end{tabular}
\vspace{6pt}
    \caption{Upper bounds on~$\chi_f(G)$ when~$G$ is triangle-free.}\label{tab:3}
\end{table}

\begin{thm}\label{thm:triangle-free}
For every triangle-free graph~$G$ of maximum degree~$\Delta$, 
    \[ \chi_f(G) \le 1 + \min_{k\in \mathbb{N}} \inf_{\lambda>0} \frac{(1+\lambda)^k+\lambda(1+\lambda)\Delta}{\lambda(1+k\lambda)}.\]
\end{thm}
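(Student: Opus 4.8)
The plan is to exhibit, for each fixed $k\in\mathbb N$ and each $\lambda>0$, a fractional colouring of $G$ of weight at most $1 + \frac{(1+\lambda)^k+\lambda(1+\lambda)\Delta}{\lambda(1+k\lambda)}$, and then take the infimum over $\lambda$ and the minimum over $k$. The natural device is a probabilistic one: build a random independent set $\mathbf I$ in $G$ such that every vertex $v$ lies in $\mathbf I$ with probability at least some $p$, and such that a fractional colouring can be assembled from copies of the distribution of $\mathbf I$ together with a single extra class (the ``$+1$'' in the bound). Concretely, if $\mathbf I$ is a random independent set with $\pr{v\in\mathbf I}\ge p$ for every $v$, then assigning weight $\frac1p\,\pr{\mathbf I = I}$ to each independent set $I$ yields a fractional colouring of weight $1/p$; the task reduces to producing such an $\mathbf I$ with $p\ge \bigl(1+\tfrac{(1+\lambda)^k+\lambda(1+\lambda)\Delta}{\lambda(1+k\lambda)}\bigr)^{-1}$, or more precisely to splitting off one class and getting the remaining weight down to the displayed fraction.

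The core of the argument should be a local, BFS-type exploration of depth $k$ around a uniformly random root, exploiting triangle-freeness. First I would fix a uniformly random permutation (or equivalently i.i.d.\ uniform $[0,1]$ labels) of $V(G)$, and run a bounded-depth greedy/activation process: starting from a random vertex, one explores the ball of radius $k$, at each level selecting into a tentative independent set those vertices that beat all already-committed neighbours in the ordering. Triangle-freeness is what makes the level sets behave like a tree locally — the neighbourhood of any selected vertex is itself independent, so a selected vertex can be ``used'' to cover its entire neighbourhood at a controlled cost. The parameter $\lambda$ enters as a tuning weight governing how aggressively one propagates the selection from one BFS level to the next; the factor $(1+\lambda)^k$ measures the worst-case blow-up of the explored region over $k$ levels, the term $\lambda(1+\lambda)\Delta$ accounts for the boundary leakage at the last level (each of the $\le\Delta$ neighbours of a leaf, times a $\lambda(1+\lambda)$ amortisation factor), and the denominator $\lambda(1+k\lambda)$ is the total ``mass'' that the process manages to pin onto a single vertex across all $k+1$ levels. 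Making these bookkeeping constants line up exactly with the displayed rational function is the step requiring care, but it is essentially a weighted count on a depth-$k$ tree with branching at most $\Delta$.

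The main obstacle, I expect, is handling the interaction between overlapping explored regions — two nearby roots whose radius-$k$ balls intersect — so that the per-vertex probability bound holds \emph{uniformly}, not just in expectation over a tree-like neighbourhood with no short cycles beyond length $3$. Since the hypothesis is only triangle-freeness (girth $\ge 4$), not large girth, the radius-$k$ ball around a vertex need not be a tree, and one cannot pretend cycles of length $4,5,\dots$ are absent. The fix is to design the process so that the analysis is \emph{monotone} in the neighbourhood structure: extra edges or extra short cycles can only help (they only add constraints that were already conservatively accounted for), so the tree case is genuinely the worst case and the bound $(1+\lambda)^k$ on the size of the explored region, together with the $\lambda(1+\lambda)\Delta$ boundary term, remains valid. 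Once the per-vertex probability $p$ is established, converting $\mathbf I$'s distribution into a fractional colouring is routine (LP duality / the averaging argument above), the ``$+1$'' absorbs one uncovered class or a normalisation slack, and finally optimising over $\lambda>0$ and $k\in\mathbb N$ gives the stated bound. The corollary of Table~\ref{tab:3} and the asymptotic form in the abstract then follow by choosing $k\approx \ln\Delta$ and $\lambda\approx (\ln\Delta)/\Delta$ and estimating.
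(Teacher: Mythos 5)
There is a genuine gap, and it starts with your very first reduction. You reduce the theorem to exhibiting a random independent set $\mathbf I$ whose \emph{per-vertex} marginal satisfies $\pr{v\in\mathbf I}\ge p$ for every $v$, with $1/p$ equal to the claimed bound. While it is true that such a distribution would give $\chi_f(G)\le 1/p$, no distribution of the kind you describe (hard-core-like, or a bounded-depth random-permutation exploration) has this property: for the centre $v$ of an induced star whose leaves are otherwise unconstrained, any such local process puts $v$ in $\mathbf I$ with probability of order $\lambda/(1+\lambda)^{\deg(v)}$, i.e.\ exponentially small in $\Delta$, so the $1/p$ bound collapses. By LP duality a distribution with min-marginal $\ge p$ exists \emph{iff} $\chi_f(G)\le 1/p$, so producing one is exactly as hard as the theorem itself; your proposal never supplies the mechanism that makes this possible. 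The paper's mechanism is precisely the greedy fractional colouring algorithm (\cref{algo}): one does not need a lower bound on $\pr{v\in\bI_H}$ alone, but only on the combination $\alpha_v\pr{v\in\bI_H}+\beta_v\esp{\abs{N(v)\cap\bI_H}}\ge 1$, for \emph{every induced subgraph} $H$, so that a vertex that is rarely in the random set is compensated by its neighbours being in it often. Your sketch has no analogue of this compensation, nor of the requirement to handle all induced subgraphs, and the ``$+1$'' is not a normalisation slack but comes out of the value of $\alpha_v$.

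The second gap is in the claimed analysis of the depth-$k$ exploration. In the actual proof $k$ is not an exploration depth at all: the only exploration is of the closed neighbourhood $N[v]$. One conditions on $\bI_H\setminus N[v]=J$, invokes the Spatial Markov Property of the hard-core measure at fugacity $\lambda$ over \emph{all} independent sets, and uses triangle-freeness to see that the unblocked part of $N[v]$ induces a star $K_{1,d}$, on which the marginals are computed exactly; the constants $\alpha=1+\frac{(1+\lambda)^k}{\lambda(1+k\lambda)}$ and $\beta=\frac{1+\lambda}{1+k\lambda}$ are then chosen so that the resulting function of $d$ equals $1$ at $d=k-1$ and $d=k$ and is convex, hence $\ge1$ for all $d$. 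Your interpretation of $(1+\lambda)^k$ as the blow-up of a radius-$k$ ball, and the accompanying ``extra edges only help, so the tree case is the worst case'' monotonicity claim, are unsupported assertions (and the worst case in the real proof is not a tree of depth $k$ but simply a star); as written, there is no route from your process to the displayed rational function.
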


\Cref{thm:triangle-free} lets us derive the upper bounds for the fractional
chromatic number of triangle-free graphs presented in Table~\ref{tab:3}.
We note that considering the couple~$(k,\lambda) = (2,\infty)$,
Theorem~\ref{thm:triangle-free}  implies the fractional Reed bound of
Theorem~\ref{thm:Reedsbound}.
We also obtain the following upper bound as a corollary.
    
\begin{cor}\label{cor-general} For every triangle-free graph~$G$ of maximum degree~$\Delta\ge 2$,
\[ \chi_f(G) \le 1+\pth{1+\frac{2}{\ln \Delta}} \frac{\Delta}{\ln \Delta - 2\ln \ln \Delta}.\]
\end{cor}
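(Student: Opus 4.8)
The plan is to derive the corollary from Theorem~\ref{thm:triangle-free} by choosing the parameters $k$ and $\lambda$ appropriately as functions of $\Delta$, rather than optimising exactly. Looking at the expression
\[ 1 + \min_{k\in\mathbb N}\inf_{\lambda>0}\frac{(1+\lambda)^k+\lambda(1+\lambda)\Delta}{\lambda(1+k\lambda)}, \]
the two competing terms in the numerator are $(1+\lambda)^k$, which is essentially $e^{k\lambda}$ when $\lambda$ is small, and $\lambda(1+\lambda)\Delta \approx \lambda\Delta$. The denominator is about $k\lambda^2$. So heuristically the bound behaves like $\frac{e^{k\lambda}}{k\lambda^2}+\frac{\Delta}{k\lambda}$, and to make the first term comparable to (or smaller than) a quantity of order $\frac{\Delta}{\ln\Delta}$ we want $e^{k\lambda}$ to be roughly $\Delta$ times a polynomial correction, i.e. $k\lambda \approx \ln\Delta$. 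Substituting $k\lambda = \ln\Delta - 2\ln\ln\Delta + O(1)$ makes $e^{k\lambda}$ of order $\Delta/(\ln\Delta)^2$, so the first term becomes $O(\Delta/(\ln\Delta)^2)\cdot\frac{1}{k\lambda^2}$, which is of lower order provided $\lambda$ itself does not go to $0$ too fast; and the second term becomes $\frac{\Delta}{\ln\Delta-2\ln\ln\Delta}$, the desired main term.

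Concretely, I would set $\lambda \coloneqq \frac{2}{\ln\Delta}$ (this is the source of the factor $1+\frac{2}{\ln\Delta}$ in the statement, since $\frac{1+\lambda}{\lambda}\cdot\frac{1}{1+k\lambda}\cdot\Delta$ will contribute $(1+\lambda)\cdot\frac{\Delta}{k\lambda}$ roughly, and one wants $(1+\lambda)$ out front) and then choose $k \coloneqq \big\lceil \frac{\ln\Delta - 2\ln\ln\Delta}{\lambda}\big\rceil = \big\lceil \tfrac12 \ln\Delta\,(\ln\Delta - 2\ln\ln\Delta)\big\rceil$, which is a positive integer for $\Delta$ large enough. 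With these choices $1+k\lambda \ge \ln\Delta - 2\ln\ln\Delta$, so
\[ \frac{\lambda(1+\lambda)\Delta}{\lambda(1+k\lambda)} = \frac{(1+\lambda)\Delta}{1+k\lambda}\le \pth{1+\frac{2}{\ln\Delta}}\frac{\Delta}{\ln\Delta - 2\ln\ln\Delta}, \]
which is exactly the claimed bound. It then remains to show that the other term, $\frac{(1+\lambda)^k}{\lambda(1+k\lambda)}$, is at most $1$ — since the statement has a ``$1+$'' in front and we have already used up the whole second term, we need the first contribution to be absorbed. Using $k\lambda \le \ln\Delta - 2\ln\ln\Delta + \lambda$ and $\ln(1+\lambda)\le\lambda$ gives $(1+\lambda)^k \le e^{k\lambda} \le e^{\lambda}\cdot\frac{\Delta}{(\ln\Delta)^2}$, while $\lambda(1+k\lambda)\ge \frac{2}{\ln\Delta}(\ln\Delta-2\ln\ln\Delta)$, which is of order $2$; hence the ratio is of order $\frac{\Delta}{(\ln\Delta)^2}$ and is certainly \emph{not} below $1$ — so a cruder split will not work, and one must instead keep the $\ln(1+\lambda) = \lambda - \tfrac{\lambda^2}{2}+\cdots$ correction, i.e. exploit that $(1+\lambda)^k \le e^{k\lambda - k\lambda^2/2 + O(k\lambda^3)}$, and choose $k\lambda$ slightly \emph{larger} than $\ln\Delta-2\ln\ln\Delta$ to compensate. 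The clean way is: fix $\lambda = \frac{2}{\ln\Delta}$ and pick $k$ to be the least integer with $(1+\lambda)^k \ge \lambda(1+k\lambda)$, i.e. with the first term $\le 1$; one checks $k = O((\ln\Delta)^2)$ suffices for this, and then argues $1+k\lambda$ is still at least $\ln\Delta-2\ln\ln\Delta$ because the minimal such $k$ satisfies $k\lambda \ge \ln(\lambda(1+k\lambda))/\ln(1+\lambda) \ge \ln\Delta - 2\ln\ln\Delta$ for $\Delta$ large, after absorbing lower-order terms into the slack between $\ln(1+\lambda)$ and $\lambda$.

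The main obstacle, then, is the delicate bookkeeping in this last step: one has to choose $k$ so that \emph{both} $(1+\lambda)^k/(\lambda(1+k\lambda))\le 1$ \emph{and} $(1+\lambda)\Delta/(1+k\lambda)\le(1+\frac{2}{\ln\Delta})\Delta/(\ln\Delta-2\ln\ln\Delta)$ hold simultaneously, which amounts to trapping $1+k\lambda$ in a window $[\ln\Delta - 2\ln\ln\Delta,\ \ln\Delta - 2\ln\ln\Delta + O(1)]$; this works precisely because the gap $\lambda - \ln(1+\lambda) = \Theta(\lambda^2) = \Theta(1/(\ln\Delta)^2)$ is summed $k = \Theta((\ln\Delta)^2)$ times, contributing $\Theta(1)$ of slack, exactly enough to kill the polylogarithmic overshoot $\frac{\Delta}{(\ln\Delta)^2}$ once it is divided by the denominator of order $\ln\Delta$. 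Finally, for the small values of $\Delta$ (roughly $2\le\Delta< \Delta_0$ for some explicit threshold) where the asymptotic choice of $k$ is not yet a valid positive integer or the inequalities do not yet hold, I would verify the claimed bound directly: for such $\Delta$ one can fall back on the couple $(k,\lambda)=(2,\infty)$ in Theorem~\ref{thm:triangle-free}, which by the remark after that theorem gives $\chi_f(G)\le\frac{\Delta+3}{2}$, and check by a routine monotonicity argument that $\frac{\Delta+3}{2}\le 1+(1+\frac{2}{\ln\Delta})\frac{\Delta}{\ln\Delta-2\ln\ln\Delta}$ on the relevant finite range (the right-hand side is eventually much larger, and the crossover is small), so only a bounded check remains.
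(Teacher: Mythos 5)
Your parameter scaling ($\lambda$ of order $1/\ln\Delta$ and $k\lambda\approx\ln\Delta-2\ln\ln\Delta$) is the right one, but the accounting at the heart of your argument is wrong, and the repair you sketch cannot work. You attribute the whole factor $1+\frac{2}{\ln\Delta}$ to $(1+\lambda)$ with $\lambda=\frac{2}{\ln\Delta}$, which forces you to absorb the entire term $\frac{(1+\lambda)^k}{\lambda(1+k\lambda)}$ into the additive ``$1+{}$''. But for every $\lambda<1$ and every $k\ge1$ one has $(1+\lambda)^k\ge 1+k\lambda>\lambda(1+k\lambda)$, so that term is always strictly larger than $1$: the ``least $k$ with the first term $\le 1$'' that you propose does not exist (and the inequality you write for it, $(1+\lambda)^k\ge\lambda(1+k\lambda)$, points the wrong way in any case). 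The second-order fix does not help either: the gain from $\lambda-\ln(1+\lambda)=\Theta(\lambda^2)$ accumulated over $k=\Theta((\ln\Delta)^2)$ steps is $\Theta(1)$ in the exponent, i.e.\ a constant multiplicative factor, whereas the overshoot you need to remove is of order $\Delta/(\ln\Delta)^2$; moreover the denominator $\lambda(1+k\lambda)$ is of order $2$ for your choice, not $\ln\Delta$. Forcing the first term down to $O(1)$ would require $k\lambda=O(1)$, which destroys the main term $\frac{(1+\lambda)\Delta}{1+k\lambda}$ --- the window in which you want to trap $1+k\lambda$ is empty.

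The correct move is not to demand that the $(1+\lambda)^k$ term be at most $1$, but to let it consume half of the allowed relative error $\frac{2}{\ln\Delta}$. This is what the paper does: take $\lambda=\frac{1}{\ln\Delta}$ and $k=\lfloor\ln\Delta(\ln\Delta-2\ln\ln\Delta)\rfloor$, so that $k\lambda\le\ln\Delta-2\ln\ln\Delta\le 1+k\lambda$ (with $\Delta\ge4$ ensuring $k\ge1$; smaller $\Delta$ is handled by $\chi_f(G)\le\Delta+1\le4$, which is below the right-hand side). Then $(1+\lambda)^k\le e^{k\lambda}\le\Delta/(\ln\Delta)^2$ gives $\frac{(1+\lambda)^k}{\lambda(1+k\lambda)}\le\frac{1}{\ln\Delta}\cdot\frac{\Delta}{\ln\Delta-2\ln\ln\Delta}$, while $\frac{(1+\lambda)\Delta}{1+k\lambda}\le\left(1+\frac{1}{\ln\Delta}\right)\frac{\Delta}{\ln\Delta-2\ln\ln\Delta}$; the two $\frac{1}{\ln\Delta}$ contributions together produce exactly the stated factor $1+\frac{2}{\ln\Delta}$. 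Finally, your fallback on $(k,\lambda)=(2,\infty)$ for small $\Delta$ is weaker than you suggest: $\frac{\Delta+3}{2}$ exceeds the claimed bound once $\Delta$ is around $550$, so it cannot bridge to an asymptotic regime that only starts at large $\Delta$; the paper needs only the trivial bound $\Delta+1$ for $\Delta\le3$ because its explicit choice of $(k,\lambda)$ works for all $\Delta\ge4$.
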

\begin{proof}
The function~$f\colon \Delta \mapsto 1+\pth{1+\frac{2}{\ln \Delta}} \frac{\Delta}{\ln \Delta - 2\ln \ln \Delta}$ is bounded from below by~$4$ when~$\Delta >0$, so we may assume that~$\Delta \ge 4$, using the naive upper bound~$\Delta+1$ for smaller values of~$\Delta$.

It remains to apply \Cref{thm:triangle-free} and consider the
    couple~$(k,\lambda) = (\lfloor \ln \Delta(\ln \Delta - 2\ln \ln
    \Delta)\rfloor, \frac{1}{\ln \Delta})$, by noting that we then have~$k \lambda \le \ln \Delta - 2\ln \ln \Delta \le 1+k\lambda$, and~$(1+\lambda)^k \le e^{k\lambda} \le \Delta/(\ln \Delta)^2$.
\end{proof}

We note that \Cref{cor-general} in particular implies the fractional version of the
triangle-free bound of Theorem~\ref{thm-molloy}. Therefore
Theorem~\ref{thm:triangle-free} yields a smooth transition for the fractional
chromatic number of triangle-free graphs from Reed's bound to Molloy's bound,
as~$\Delta$ increases.

\bigskip

In order to obtain upper bounds smaller than that of \Cref{thm-McDiarmid} for
smaller values of the maximum degree, we need to consider graphs of higher
girth. Our second contribution is to establish good upper bounds for the
fractional chromatic number of graphs of girth~$7$. Moreover, these bounds have
the same local property as those of \Cref{thm-McDiarmid}.

\begin{thm}\label{thm:girth7}
    Let~$f(x) \coloneqq 1 + \min_{k \in \mathbb{N}}  \frac{2x +  2^{k-3}}{k}$.
If~$G$ is a graph of girth at least~$7$, then~$G$ admits a fractional colouring~$c$ such that for every induced subgraph~$H$ of~$G$, the restriction of~$c$
    to~$H$ has weight at most~$f\left(\max\sst{\deg_G(v)}{v\in V(H)}\right)$.
In particular,
\[ \chi_f(G) \le f(\Delta(G)). \]
\end{thm}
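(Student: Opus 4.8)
The plan is to build the fractional colouring of $G$ greedily, one vertex at a time, maintaining at each step a probability distribution over independent sets that "covers" every already-processed vertex with total weight at most $f$ of the current maximum degree, and where the locality is tracked by the degrees actually present. Concretely, I would think of a fractional colouring as a random independent set, and I would construct it so that at each stage the portion of total weight used to cover the processed subgraph $H$ is bounded by $f(\max\{\deg_G(v):v\in V(H)\})$. Order the vertices $v_1,\dots,v_n$ and let $H_i$ be the subgraph induced on $\{v_1,\dots,v_i\}$; I want to colour $H_i$ with weight at most $f(\Delta_i)$ where $\Delta_i \coloneqq \max_{j\le i}\deg_G(v_j)$, adding only a controlled amount of weight when passing from $H_{i-1}$ to $H_i$.

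The engine of the argument should be a local rule exploiting girth $7$: around any vertex $v$, the ball of radius $2$ induces a tree, so the neighbours of $v$ have pairwise disjoint neighbourhoods outside $v$, and there are no short cycles to create conflicts. This is exactly what is needed to run a layered/iterative independent-set extension: starting from a single vertex (or a small independent set) one can, over $k$ rounds, expand into a large independent set because at each round the candidate set roughly doubles in the absence of short cycles, which is where the $2^{k-3}$ term and the factor $2x$ (two "directions" per step in the tree-like neighbourhood) enter. I would phrase this as: for each $k$, there is a fractional colouring scheme whose per-vertex cost is $\frac{2x+2^{k-3}}{k}$, and then take the minimum over $k$; the $+1$ absorbs the seed vertex. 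The key inequality to verify is that the weight increment when adding $v_i$ is at most $f(\Delta_i)-f(\Delta_{i-1})$ plus whatever slack $f$ leaves, using that $f$ is increasing in $x$ so that raising the max degree only helps.

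I expect the main obstacle to be making the locality bookkeeping rigorous: it is easy to get a global bound $\chi_f(G)\le f(\Delta(G))$ from a clean extremal/LP-duality argument, but the theorem asserts the stronger statement that a \emph{single} fractional colouring $c$ works simultaneously for all induced subgraphs with the bound governed by the \emph{local} maximum degree in that subgraph. Achieving this probably requires either (i) a careful induction on $|V(G)|$ in which the colouring of $G-v$ is extended to $G$ by splitting the weight assigned to sets meeting $N(v)$, reusing the girth-$7$ structure to find room for $v$ in each such set, and checking that the extra weight charged is covered by $f(\deg_G(v))$; or (ii) an explicit construction of $c$ from the tree-like neighbourhoods followed by a direct verification of the local weight bound. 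The delicate point in approach (i) is that when we delete $v$ the degrees of its neighbours drop, so one must argue that the colouring of $G-v$ already had enough weight locally (as guaranteed by the stronger induction hypothesis applied to subgraphs of $G-v$) to absorb $v$'s contribution without exceeding $f(\deg_G(v))$; the girth condition is what prevents the neighbourhoods of $v$'s neighbours from overlapping and thus keeps this charging argument from double-counting. I would also need the elementary monotonicity and convexity-type facts about $\min_k \frac{2x+2^{k-3}}{k}$ that make the four piecewise-linear regimes in the abstract come out, but those are routine once the main scheme is in place.
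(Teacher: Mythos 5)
There is a genuine gap: your proposal never supplies the mechanism that actually produces the bound. The paper's proof runs the greedy fractional colouring algorithm (Lemma~\ref{algo}), which reduces the theorem to exhibiting, for every induced subgraph $H$ and every vertex $v\in V(H)$, a probability distribution on independent sets of $H$ satisfying the local inequality $\frac{2^{k-3}+k}{k}\pr{v\in \bI_H}+\frac{2}{k}\esp{\abs{N(v)\cap \bI_H}}\ge 1$ for all $k\ge 4$; the distribution used is the hard-core distribution at fugacity $\lambda=4$ over the \emph{maximal} independent sets of $H$. Verifying that inequality is the entire technical content: one conditions on $\bI_H$ outside $N^2[v]$, invokes a Spatial Markov Property for maximal independent sets (Lemma~\ref{lem:markov-max}), whose hypothesis~\eqref{eq:markov} is exactly what girth at least $7$ guarantees for the radius-$2$ ball, and then carries out an explicit computation of $\pr{v\in\bI_H\mid E_J}$ and $\esp{\abs{N(v)\cap\bI_H}\mid E_J}$ on the resulting depth-$2$ tree, parametrised by how many second-layer neighbours each first-layer vertex retains, followed by a case analysis. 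Your sketch contains none of this: no concrete distribution is chosen, no local inequality is stated, and the heuristic that the $2^{k-3}$ term comes from a candidate set ``roughly doubling'' over $k$ rounds does not correspond to any step of a proof (in the actual argument $2^{k-3}$ arises from $\lambda^{\,\cdot}$-weights with $\lambda=4$ in the hard-core computation, not from an iterative expansion).

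Your fallback strategies also do not close the gap. The vertex-by-vertex induction (i), in which the colouring of $G-v$ is extended by ``splitting the weight assigned to sets meeting $N(v)$,'' is precisely the kind of charging that is not known to work here: there is no argument that the weight freed near $N(v)$ can be redirected to cover $v$ at cost compatible with $f(\deg_G(v))$, and the claim that the increment is at most $f(\Delta_i)-f(\Delta_{i-1})$ ``plus whatever slack $f$ leaves'' is not a provable step as stated (note $f(\Delta_i)-f(\Delta_{i-1})$ can be far smaller than what adding a vertex costs). The local, simultaneous-for-all-$H$ guarantee in the theorem is not obtained by such bookkeeping but comes for free from Lemma~\ref{algo} once the per-vertex parameters $\alpha_v=1+2^{k(v)-3}/k(v)$, $\beta_v=2/k(v)$ are shown admissible. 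In short, you have correctly identified that girth $7$ makes radius-$2$ balls trees and that some greedy/local framework is needed, but the key lemma (the occupancy-type inequality for the hard-core model at fugacity $4$ on maximal independent sets of those trees) is missing, and without it the proposal does not yield the stated bound.
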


\begin{rk}
    In Theorem~\ref{thm:girth7}, if~$x\ge3$ then
    the minimum of the function~$k\to\tfrac{2x+2^{k-3}}{k}$ (over~$\mathbb{N}$)
    is attained when~$k$ is the integer closest to~$4 + \log_2 x -
    \log_2\log_2 x$. So if~$x\ge3$, then~$f(x) = (2\ln 2 + o(1)) 
    x/ \ln x$, which is off by a multiplicative factor~$2\ln 2$ from the asymptotic value for triangle-free graphs which can be derived from \Cref{thm:triangle-free}. The turning point happens when the maximum degree is approximately~$3\cdot 10^6$.
Theorem~\ref{thm:girth7} lets us derive the upper bounds for the fractional chromatic number of graphs of girth at least~$7$ presented in Table~\ref{tab:4}.
\end{rk}

\renewcommand{\arraystretch}{1.4}
\begin{table}[!ht]\centering
\begin{tabular}{@{ }ccccc@{ }}
\toprule
~$\Delta(G)$&\phantom{abc}&optimal~$k$&\phantom{abc}&upper bound on~$\chi_f(G)$\\
\midrule
~$3 \dotso 8$&\phantom{abc} &~$5$&\phantom{abc} &$\frac{2\Delta(G)+9}{5}$ \\
~$8 \dotso 20$&\phantom{abc} &~$6$&\phantom{abc} &$\frac{\Delta(G)+7}{3}$ \\
~$20 \dotso 48$&\phantom{abc} &~$7$&\phantom{abc} &$\frac{2\Delta(G)+23}{7}$ \\
~$48 \dotso 112$&\phantom{abc} &~$8$&\phantom{abc} &$\frac{\Delta(G)}{4}+5$ \\
~$112 \dotso 256$&\phantom{abc} &~$9$&\phantom{abc} &$\frac{2\Delta(G)+73}{9}$ \\
\bottomrule
\end{tabular}
\vspace{6pt}
    \caption{Upper bounds on~$\chi_f(G)$ when~$G$ has girth at least~$7$.}\label{tab:4}
\end{table}
\renewcommand{\arraystretch}{1}

One could wonder to what extent our results extend to the chromatic number. This is the motivation of a follow-up work involving the authors~\cite{DKPS20+}, where the main theorem is a version of \Cref{algo} holding for DP-colourings, which is used in order to derive bounds on~$\chi_{\rm DP}$ for various classes of sparse graphs. However, the bound of that main theorem is looser than that of \Cref{algo} in several ways, which makes it irrelevant for small degree graphs. Finding a generic method allowing to compute relevant upper bounds for the chromatic number of classes of sparse graphs of small maximum degree is an enticing open problem.

\medskip

Finally, we provide improved upper bounds on the Hall ratio of
graphs of maximum degree in~$\{3,4,5\}$ and girth in~$\{6,\dotsc,12\}$. In
particular, these are upper bounds on the fractional chromatic number of
vertex-transitive graphs in these classes.
These upper bounds are obtained \emph{via} a systematic computer-assisted method.

\begin{thm}\label{thm:ratio}
    The values presented in \Cref{table-thm7} are upper bounds
    on~$\rho(d,g)$ for~$d\in\{3,4,5\}$ and~$g\in\{6,\dotsc,12\}$.
\end{thm}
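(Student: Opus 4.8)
The plan is to prove \Cref{thm:ratio} by reducing each pair~$(d,g)$ to a finite combinatorial optimisation that can be certified by computer, in the same spirit as the arguments underlying \Cref{thm:triangle-free} and \Cref{thm:girth7} but specialised to the Hall ratio (i.e.\ the independence ratio) rather than the fractional chromatic number. Recall that to bound~$\rho(d,g)$ from above by some value~$r$ it suffices to show that every graph~$G$ with~$\Delta(G)\le d$ and~$\girth(G)\ge g$ satisfies~$\alpha(G)\ge |V(G)|/r$; by a standard discharging/weighting argument it is enough to exhibit, for every such~$G$, a probability distribution on independent sets of~$G$ (equivalently a fractional colouring) under which each vertex is covered with probability at least~$1/r$. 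The first step is therefore to set up the \emph{local} quantity: for a vertex~$v$ of degree~$j\le d$, one wants a lower bound~$p_j$ on the probability that~$v$ lands in a random independent set produced by a prescribed local procedure, and then~$\rho(d,g)\le 1/\min_j p_j$.

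The second step is to design the local procedure and turn the requirement ``each vertex is covered with probability~$\ge p_{\deg(v)}$'' into a system of inequalities over the ball of radius~$\lfloor (g-1)/2\rfloor$ around a vertex, which because~$g$ is large is a tree. Concretely, I would run the natural randomised greedy/activation process on the tree-like neighbourhood: assign i.i.d.\ uniform~$[0,1]$ labels (or a random permutation) to vertices, and place a vertex in the independent set if it is a local minimum among its not-yet-excluded neighbours, iterating a bounded number of rounds determined by~$g$. Since the relevant neighbourhood is an acyclic graph of bounded degree, the covering probability of the root is a function only of the \emph{degree sequence} along a bounded-depth tree, and this function is monotone and can be optimised over the finitely many degree-constrained trees of that depth. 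This yields, for each~$(d,g)$, a finite linear (or piecewise-linear) program whose optimum is the claimed bound in \Cref{table-thm7}; the computer is used to enumerate the bounded-depth~$\le d$-ary trees, assemble the program, and verify feasibility of the stated rational value by exhibiting an explicit dual certificate (a set of multipliers) so that the verification itself is a finite rational arithmetic check independent of any floating-point computation.

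The third step is the reduction from arbitrary~$G$ of girth~$\ge g$ to the tree analysis: one must argue that the local probabilities computed on the (infinite or depth-truncated) tree are not worsened by the presence of cycles of length~$\ge g$ or by the finiteness of~$G$. The standard way is a coupling/monotonicity argument — the activation process on~$G$ stochastically dominates (in the sense of the root-covering event) the process on the tree of the same depth with the worst admissible degree sequence, because within radius~$\lfloor(g-1)/2\rfloor$ the graph~$G$ is a forest, and beyond that radius additional edges can only help (they add constraints that make other vertices \emph{less} likely to be chosen, hence the root more likely). One then takes the tree depth large enough that the truncation error is absorbed, or, better, works with an exactly self-consistent distribution~$(p_j)_j$ satisfying the fixed-point inequalities so that no truncation is needed. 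Finally one checks the boundary/degenerate cases (components that are paths or short, vertices of degree~$<d$, etc.) separately, which is routine.

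The main obstacle I anticipate is not the probabilistic reduction — that is by now fairly standard for large girth — but making the computer-assisted optimisation both \emph{correct} and \emph{tight}: one needs the local process to be chosen cleverly enough that the resulting program's optimum actually equals the values in \Cref{table-thm7} (a naïve greedy process typically gives something strictly weaker), and one needs the verification to be rigorous, i.e.\ to produce exact rational dual certificates rather than numerical optima, since several of the target values (such as~$1/3$ at~$(4,10)$ and~$2/7$ at~$(5,8)$) are presumably extremal and leave no slack. Ensuring the enumeration of depth-bounded degree-constrained trees is exhaustive, and that the inequalities are set up with the correct direction of monotonicity for every degree pattern, is where the care has to go. I would therefore structure the proof so that \Cref{thm:ratio} follows from a clean lemma — ``if the finite program~$P(d,g)$ has a feasible dual solution of value~$r$, then~$\rho(d,g)\le r$'' — proved once and for all by the coupling argument, after which establishing the table amounts to exhibiting the certificates, which are deposited as supplementary data and whose validity is a finite check.
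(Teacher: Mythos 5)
Your general architecture (local analysis on bounded-depth trees, a finite linear program assembled by computer, exact rational certificates, plus a separate reduction lemma) is in the right spirit, but two of your key choices would not deliver the values in \Cref{table-thm7}, and one step is unjustified. First, the distribution. You propose a random-label activation/greedy process run for boundedly many rounds; the paper instead uses the \emph{uniform distribution over maximum independent sets}. This is not a cosmetic difference: the expected size produced by the random-priority greedy process only matches Shearer's bound (\Cref{thm:shearer}), which is strictly weaker than every entry of the table, and the paper explicitly points out that this greedy distribution has strong global dependencies with no known spatial Markov property. Your replacement for that missing property is a coupling/monotonicity claim (``edges beyond radius $\lfloor(g-1)/2\rfloor$ can only help the root''), which is unproven and, for such processes, not true in any obvious sense, since the effect of an extra edge alternates in sign along paths. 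The paper avoids any such domination argument: the uniform distribution over maximum independent sets has an exact Spatial Markov Property, so one conditions on the realisation outside the ball and then enumerates \emph{all} conditional patterns that can arise (trees whose two deepest layers may have reduced degrees, and, for girth~$6$, patterns containing cycles), feeding every one of them as a constraint into the LP. That exhaustive conditional enumeration is what replaces your monotonicity step, and it is also why both vertex-rooted and \emph{edge-rooted} patterns are needed (the edge-rooted analysis, via Lemma~\ref{ratio-edge}, is responsible for the girth-$6$, $9$ and $11$ entries); your proposal has no analogue of the edge-rooted version.

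Second, your reduction ``each vertex covered with probability at least $1/r$'' asks for too much: pointwise coverage at rate $1/r$ for every graph in the class is equivalent to a fractional chromatic number bound, which is stronger than the Hall ratio bound actually claimed (the paper only deduces $\chi_f$ bounds for vertex-transitive graphs from \Cref{thm:ratio}). The paper's Lemmas~\ref{ratio-vertex} and~\ref{ratio-edge} require only an \emph{averaged} inequality: a fixed vector $\ba$ of weights on expected counts of members of $\bI$ at each distance $i\le r$ from a vertex (or edge), summing to at least $1$ locally, which after summing over all vertices (or edges) of a $d$-regular graph bounds $n/\alpha(G)$. The LP optimises over $\ba$, and the table values are its exact rational optima; with a pointwise-coverage formulation you would not recover them. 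Finally, your treatment of vertices of degree less than $d$ as ``routine'' hides a genuine step: the paper reduces maximum degree $d$ to $d$-regular graphs of the same girth via an explicit construction (Lemma~\ref{regular}, built on Exoo--Jajcay Cayley graphs) and a pigeonhole argument, not by a local patch inside the LP.
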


\begin{table}[!ht]
\begin{center}
    \begin{tabular}{@{}cccccccc@{}}
        \toprule
\diagbox{$d$}{$g$} &~$6$ &~$7$ &~$8$ &~$9$ &~$10$ &~$11$ &~$12$ \\
\midrule 
$3$ &~$30/11 \approx 2.727272$ & \color{gray}~$30/11$ &~$2.625224$ &~$2.604167~$ &~$2.557176$ &~$2.539132$ &~$2.510378$  \\
$4$  &~$41/13 \approx 3.153846$ & \color{gray}~$41/13$ &~$3.038497$ &~$3.017382$ &~$3$  \\
$5$ & \color{gray}~$69/19 \approx 3.631579$ &~$3.6$ &~$ 3.5$ \\
\bottomrule
\end{tabular}
\end{center}
\caption{Upper bounds on~$\rho(d,g)$ for~$d\in\{3,4,5\}$
    and~$g\in\{6,\dotsc,12\}$.}\label{table-thm7}
\end{table}

The bounds provided by Theorem~\ref{thm:ratio} when~$d\in\{3,4\}$ and~$g=7$ are the same as those for~$g=6$.
It seems that this could be a general phenomenon.
We therefore offer the following conjecture, implicitly revealing that
we expect our method to produce an upper bound of~$2.5$ on~$\rho(3,13)$.

\begin{conj}
    The values presented in \Cref{table-conj} are upper bounds on~$\rho(d,g)$ for~$d\in\{3,4,5\}$ and~$g\in\{6,8,10,12\}$.
\end{conj}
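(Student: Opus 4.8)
The plan is to regard the conjecture as a prediction about the output of the computer-assisted procedure underlying \Cref{thm:ratio}, so that ``proving'' it amounts to running that procedure at the larger radius which higher girth permits. The structure of the argument is as follows. Since the class of graphs of maximum degree at most~$d$ and girth at least~$g$ is closed under taking subgraphs (girth can only increase, degrees can only decrease), to bound~$\rho(d,g)$ it suffices to lower-bound~$\alpha(G)/\abs{V(G)}$ for every such~$G$; and in such a graph the ball of radius~$t\coloneqq\lfloor g/2\rfloor-1$ around any vertex or edge is a tree with branching at most~$d$. The method assigns to every vertex a local quantity --- a probability of belonging to a random independent set, or a weight in a fractionally-verified scheme --- that depends only on this bounded radius-$t$ neighbourhood, and the existence of a globally consistent such assignment of density~$1/r$ is decided by a linear (or flow) program ranging over the finite family of admissible rooted configurations. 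The supremum of~$r$ over that family is the resulting bound on~$\rho(d,g)$, after a separate exhaustive verification of the finitely many graphs too small to exhibit the generic local structure.

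Concretely I would, for each target pair~$(d,g)$: (i)~enumerate, up to the automorphisms of a configuration, the rooted trees of radius~$t=\lfloor g/2\rfloor-1$ with maximum degree~$d$, together with the ways their boundaries may be glued without creating a cycle shorter than~$g$; (ii)~instantiate at this radius the same LP relaxation that already powers \Cref{thm:ratio} (there computed at a smaller radius for tractability reasons, which is precisely why the conjectured values in \Cref{table-conj} improve on those of \Cref{table-thm7}); (iii)~solve every resulting program exactly over~$\mathbb{Q}$, extract rational dual certificates, and take the worst case, which is conjectured to collapse to the clean fractions of \Cref{table-conj} --- in particular~$\rho(3,12)\le 5/2$; (iv)~re-run the small-graph enumeration. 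A natural ingredient to establish along the way is a monotonicity lemma: for fixed~$d$, enlarging the radius never worsens the bound. Because~$\lfloor 7/2\rfloor-1=\lfloor 6/2\rfloor-1$ and~$\lfloor 13/2\rfloor-1=\lfloor 12/2\rfloor-1$, this lemma would at once explain the coincidence of the~$g=6$ and~$g=7$ entries of \Cref{table-thm7} for~$d\in\{3,4\}$, justify phrasing the conjecture only for even~$g$, and yield the announced bound~$\rho(3,13)\le 5/2$.

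The principal obstacle is making the radius-$4$ and radius-$5$ computations both feasible and rigorously certifiable. The number of bounded-degree rooted trees of radius~$t$, and above all the number of boundary identifications that respect the girth constraint, grows tower-like in~$t$, so the argument hinges on pruning that is aggressive yet provably sound: discarding configurations dominated in the LP sense, observing that only the most constrained closures can be extremal, and memoising subtree computations. The numeric optima must then be converted into exact rational certificates and assembled into a machine-independent proof. Finally, to confirm that \Cref{table-conj} really is the method's output rather than a mere overestimate of it, one would want matching~$d$-regular constructions of large girth attaining the conjectured Hall ratios, generalising the circulant examples of \Cref{graph1}, \Cref{graph2} and \Cref{graph34} --- and exhibiting, say, a cubic graph of girth~$13$ with Hall ratio close to~$5/2$ is itself likely to be the hardest single task.
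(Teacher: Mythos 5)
You are attempting to prove what is, in the paper, an open conjecture: the authors supply no proof, only evidence, namely the computation in \Cref{sec:edge-root} showing that for depth $r=2$ and $d\in\{3,4\}$ the LP over all girth-$6$ edge-rooted patterns $\mathcal{P}'_2(d,6)$ has the same optimum as the LP over the trees $\mathcal{T}'_2(d)$, together with \Cref{conj}, which isolates the mechanism they believe is responsible. Measured against that, your plan has a concrete flaw. The conjectured entries of \Cref{table-conj} for girth $g=2r+2$ are the values of the edge-rooted \emph{tree} LP of depth $r$ (\Cref{edge-bounds}), which is only justified for girth at least $2r+3$: in a graph of girth exactly $2r+2$, an edge-rooted pattern of depth $r$ may contain a $g$-cycle through the root edge and is then not a tree. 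So your assertion that the ball of radius $t=\lfloor g/2\rfloor-1$ around ``any vertex or edge'' is a tree fails precisely in the regime the conjecture addresses, and your reading that \Cref{table-conj} is just ``the same LP at a larger radius, previously curtailed for tractability'' misses its actual content: one must show that the cyclic patterns in $\mathcal{P}'_r(d,2r+2)$ impose no constraints beyond (convex combinations of) the tree constraints --- this is exactly \Cref{conj} --- or else enumerate those cyclic patterns outright, which the authors managed only for $r=2$, $d\le4$.

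Two further points. The coincidence of the $g=6$ and $g=7$ columns of \Cref{table-thm7} is not explained by your radius-monotonicity observation $\lfloor 7/2\rfloor-1=\lfloor 6/2\rfloor-1$: at girth $7$ the depth-$2$ edge-rooted patterns are automatically trees, at girth $6$ they are not, and the equality of the two bounds is precisely the nontrivial fact that the cyclic girth-$6$ patterns do not bind --- the phenomenon the conjecture extrapolates. Moreover the entry $\rho(3,12)\le 2.5$ additionally presupposes the uncomputed value of the depth-$5$ tree LP for $\mathcal{T}'_5(3)$, which the authors only \emph{expect} to equal $5/2$; and the enumeration of $\mathcal{P}'_r(d,2r+2)$ for $r\ge3$ (and $d=5$) is exactly what they report as computationally out of reach, so your ``aggressive yet provably sound pruning'' is the missing idea rather than an implementation detail. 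Finally, the extremal constructions you call for would address tightness only; they are not needed for, and do not help with, establishing the conjectured upper bounds.
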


\begin{table}[!th]
    \begin{center}
        \begin{tabular}{@{}ccccc@{}} \toprule
\diagbox{$d$}{$g$} &~$6$ &~$8$ &~$10$ &~$12$ \\
\midrule 
$3$ &  &~$2.604167~$ &~$2.539132$ &~$2.5$  \\
$4$  & &~$3.017382$ &~$3$  \\
$5$ &~$3.6$ &~$ 3.5$ \\ \bottomrule
\end{tabular}
\end{center}
\caption{Conjectured upper bounds on~$\rho(d,g)$
    for~$d\in\{3,4,5\}$ and~$g\in\{6,8,10,12\}$.}\label{table-conj}
\end{table}

\subsection{Notations}
We introduce some notations before establishing a few technical lemmas, from which we will prove
Theorems~\ref{thm:girth7} and~\ref{thm:ratio}. 
If~$v$ is a vertex of a graph~$G$ and~$r$ a non-negative integer, then~$N^r_G(v)$ is the set
of all vertices of~$G$ at distance exactly~$r$ from~$v$ in~$G$,
while~$N^r_G[v]$ is~$\bigcup_{j=0}^{r}N^j_G(v)$. If~$u$ is also a vertex of~$G$,
we write~$\di_G(u,v)$ for the distance in~$G$ between~$u$ and~$v$.
Further, if~$J$ is a subset of vertices of~$G$, then we write~$N_G(J)$ for the set of
vertices that are not in~$J$ and have a neighbour in~$J$, while~$N_G[J]$ is~$N_G(J)\cup J$.
We will omit the graph
subscript when there is no ambiguity, and sometimes write~$N_X(v)$ instead
of~$N(v)\cap X$, for any subset of vertices~$X\subseteq V(G)$.
The set of all independent sets of~$G$ is~$\cI(G)$, while~$\cIm(G)$ is the set of all maximal independent sets of~$G$
and~$\cIM(G)$ is the set of all maximum independent sets of~$G$.
If~$w$ is a mapping from~$\cI(G)$ to~$\mathbb{R}$ then for every
vertex~$v\in V(G)$ we set
    \[
        w[v]\coloneqq\sum_{\substack{I\in\cI(G)\\v\in I}}w(I).
    \]
Further, if~$\cI$ is a collection of independent sets of~$G$, then
$w(\cI)\coloneqq\sum_{I\in \cI}w(I)$.  If~$I$ is an independent set of a graph~$G$,
a vertex~$v$ is \emph{covered} by~$I$ if~$v$ belongs to~$I$ or has a neighbour
in~$I$. A vertex that is not covered by~$I$ is \emph{uncovered} (by~$I$).
If~$G$ is a graph rooted at a vertex~$v$, then for every positive integer~$d$,
the set of all vertices at distance~$d$ from~$v$ in~$G$ is a \emph{layer}
of~$G$.

\section{Technical lemmas} 
In this section we present the tools needed for the proofs of the main theorems.

\subsection{Greedy fractional colouring algorithm} 
Our results on fractional colouring are obtained using a greedy algorithm analysed in a recent
work involving the first author~\cite{DJKP20}. This algorithm is a
generalisation of an algorithm first described in the book of Molloy and
Reed~\cite[p.~245]{MoRe02} for the uniform distribution over maximum
independent sets. The setting here is, for each induced subgraph~$H$ of the graph
we wish to fractionally colour, a probability distribution over the
independent sets of~$H$. 

\begin{lemma}[Davies \emph{et al.}, 2018]\label{algo}
Let~$G$ be a graph given with parameters~$\alpha_v, \beta_v$ for every vertex~$v\in V(G)$. For every induced subgraph~$H$ of~$G$, let~$\bI_H$ be a random independent set of~$H$ drawn according to a given probability distribution, and assume that
\[ \alpha_v \pr{v\in \bI_H} + \beta_v \esp{\abs{N(v)\cap \bI_H}} \geq 1,\]
for every vertex~$v\in V(H)$. 
Then the \emph{greedy fractional algorithm} defined by \Cref{alg-greedy} produces a fractional colouring~$w$ of~$G$ such that the restriction of~$w$ to any subgraph~$H$ of~$G$ is a fractional colouring of~$H$ of weight at most~$\max_{v\in V(H)} \limits \alpha_v + \beta_v \deg_G(v)$.
In particular,
\[\chi_f(G)\le \max_{v\in V(G)} \limits \alpha_v + \beta_v \deg_G(v).\]
\end{lemma}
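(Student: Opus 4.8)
The plan is to analyze the greedy fractional colouring algorithm (\Cref{alg-greedy}) directly, tracking the total weight assigned as the algorithm processes the vertices. The algorithm should work roughly as follows: maintain a set~$U$ of still-uncovered vertices, initially~$V(G)$; repeatedly, take the subgraph~$H \coloneqq G[U]$, draw the random independent set~$\bI_H$ according to the prescribed distribution on~$H$, and add to the fractional colouring the independent set~$\bI_H$ (viewed as an independent set of~$G$) with a small weight~$\mathrm{d}t$, then remove from~$U$ every vertex covered by~$\bI_H$ --- or rather, in the continuous formulation, increase a ``time'' parameter and let the expected behaviour drive the book-keeping. The cleanest way to carry this out is via the standard differential-equation / potential argument: for each vertex~$v$, let~$t_v$ denote the total weight of independent sets added while~$v$ was still in~$U$ (equivalently, the ``time'' at which~$v$ becomes covered). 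One shows that~$v$ receives total colour weight at least~$1$ by the moment it is removed, so the output~$w$ is a genuine fractional colouring of~$G$; this is essentially immediate from the stopping rule, since~$v$ stays in~$U$ precisely until the independent sets containing~$v$ accumulate weight~$1$.

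The heart of the argument is bounding~$\sum_I w(I)$, the total weight produced. First I would set up the infinitesimal version: at a moment when the current uncovered set is~$U$ and~$H = G[U]$, the algorithm adds weight~$\mathrm{d}t$, and the expected rate at which a fixed vertex~$v \in U$ gets covered is~$\pr{v \in \bI_H} + \esp{\abs{N_H(v)\cap \bI_H}}$ per unit~$\mathrm{d}t$. Since~$N_H(v) \subseteq N_G(v)$, the hypothesis of the lemma gives
\[
 \alpha_v\pr{v\in \bI_H} + \beta_v\esp{\abs{N_H(v)\cap\bI_H}} \ge 1.
\]
Now compare this with the quantity~$\alpha_v + \beta_v\deg_G(v)$: writing~$p \coloneqq \pr{v\in\bI_H}$ and~$q \coloneqq \esp{\abs{N_H(v)\cap\bI_H}}$, the vertex~$v$ ``consumes'' probability mass~$p + q$ toward its own coverage, and the total weight budget we want to charge to~$v$ is~$\alpha_v + \beta_v\deg_G(v) \ge \alpha_v p + \beta_v q + (\text{slack})$, where the slack is non-negative because~$\alpha_v \ge \alpha_v p$, \dots{}~hmm, this needs the right normalisation. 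The correct bookkeeping is: the final weight equals~$\int_0^\infty \mathrm{d}t$ over the run, and one shows by the coverage inequality that~$t$ never exceeds~$\max_{v\in V(G)}(\alpha_v + \beta_v\deg_G(v))$, because as long as the run continues there is some uncovered vertex~$v$, and for that vertex the accumulated ``$\alpha_v\cdot(\text{weight of }I\ni v) + \beta_v\cdot(\text{weight of }I \text{ hitting } N_G(v))$'' has not yet reached~$\alpha_v\cdot 1 + \beta_v\deg_G(v)$, while the coverage inequality forces this accumulated quantity to grow at rate at least~$1$ per unit of~$\mathrm{d}t$. Hence the run terminates by time~$\max_v(\alpha_v + \beta_v\deg_G(v))$. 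The same argument applied to an induced subgraph~$H$ of~$G$ --- noting that the restriction of~$w$ to~$H$ is exactly the colouring produced by running the algorithm on the trace of~$U$ within~$V(H)$, and that each vertex~$v\in V(H)$ still has degree at most~$\deg_G(v)$ --- yields the claimed weight bound~$\max_{v\in V(H)}(\alpha_v + \beta_v\deg_G(v))$.

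The main obstacle is making the ``continuous time'' argument rigorous: one must either discretise (run the algorithm with weight increments~$\mathrm{d}t = 1/N$, prove the bound up to an~$O(1/N)$ error, and let~$N\to\infty$) or invoke the analysis from~\cite{DJKP18} directly, where this is already done. Since the statement explicitly credits \cite{DJKP18} and \Cref{alg-greedy}, the honest proof is to cite that analysis and verify only that the hypothesis here --- the inequality~$\alpha_v\pr{v\in\bI_H} + \beta_v\esp{\abs{N(v)\cap\bI_H}}\ge 1$ with~$N(v)$ the neighbourhood in~$G$, which is even weaker than requiring it with~$N_H(v)$ --- matches (or implies) the hypothesis under which \cite{DJKP18} proves the weight bound, and that the per-vertex local form of the conclusion is what their analysis delivers. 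The one genuinely technical point worth spelling out is the monotonicity~$N_H(v)\cap\bI_H \subseteq N_G(v)\cap\bI_H$, ensuring the hypothesis stated for~$G$ transfers to every induced subgraph~$H$ encountered during the run.
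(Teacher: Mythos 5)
The paper never proves this lemma itself --- it is imported verbatim from~\cite{DJKP18} --- so there is no internal proof to compare against; what can be judged is whether your reconstruction matches the analysis that the citation stands for, and in essence it does. Your key mechanism is the right one: for an uncovered vertex $v$, the potential $\alpha_v w[v]+\beta_v\sum_{u\in N_G(v)}w[u]$ increases by at least $\val$ in an iteration that dispenses total weight $\val$ (by the hypothesis $\alpha_v\pr{v\in\bI_H}+\beta_v\esp{\abs{N(v)\cap\bI_H}}\ge 1$), while it can never exceed $\alpha_v+\beta_v\deg_G(v)$ because every $w[u]$ is capped at $1$ by the choice of $\val$; hence $v$ reaches $w[v]=1$ before the total dispensed weight exceeds $\alpha_v+\beta_v\deg_G(v)$, which simultaneously shows the colouring is feasible, that the second term in the definition of $\val$ never stalls the algorithm, and that each iteration removes a vertex so the run is finite. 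Note that no continuous-time formulation or $\mathrm{d}t\to0$ discretisation is needed: \Cref{alg-greedy} is already discrete and the charging works exactly per iteration, so the "main obstacle" you identify is not an obstacle at all. Likewise your closing worry about $N_H(v)$ versus $N_G(v)$ is vacuous, since $\bI_H\subseteq V(H)$ and $H$ is induced, so $N_G(v)\cap\bI_H=N_H(v)\cap\bI_H$; moreover the hypothesis is assumed for every induced subgraph $H$ directly, so nothing needs transferring.

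The one step whose justification as written is wrong is the local (restriction-to-$H$) bound. The restriction of $w$ to an induced subgraph $H$ is \emph{not} "the colouring produced by running the algorithm on the trace of $U$ within $V(H)$": the conditional law of $\bI_{G[U]}\cap V(H)$ need not be the prescribed distribution on $H[U\cap V(H)]$, so that identification fails. Fortunately it is also unnecessary. Every independent set that meets $V(H)$ is dispensed at an iteration in which some vertex of $V(H)$ is still present in the current graph; if $v^*$ is the last vertex of $V(H)$ to be removed, then all such weight is dispensed while $v^*$ is alive, and your own potential argument applied to $v^*$ bounds that total by $\alpha_{v^*}+\beta_{v^*}\deg_G(v^*)\le\max_{v\in V(H)}\bigl(\alpha_v+\beta_v\deg_G(v)\bigr)$. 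With that repair (and dropping the abandoned "slack" computation in the middle paragraph), your argument is a correct, self-contained proof of the lemma rather than a deferral to the citation.
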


\medskip
\begin{algorithm}\caption{The greedy fractional algorithm}\label{alg-greedy}
    \begin{algorithmic}[0]
        \For{$I\in \cI(G)$}
            \State~$w(I)\gets0$
        \EndFor
        \State~$H\gets G$
        \While{$\abs{V(H)}>0$}
            \State~$\displaystyle \val \gets \min\left\{ \min_{v\in V(H)} \frac{1-w[v]}{\pr{v\in \bI_H}}, \min_{v\in V(H)} \Big(\alpha_v+\beta_v\deg_G(v)\Big)-w\big(\cI(G)\big)\right\}$
            \For{$I\in\cI(H)$}
                \State~$w(I)\gets w(I)+\pr{\bI_{H}=I} \val$
            \EndFor
            \State~$H \gets H - \sst{v\in V(H)}{w[v]=1}$
        \EndWhile
    \end{algorithmic}
\end{algorithm}

We note that in Lemma~\ref{algo}, although there is one probability distribution on each induced subgraph, the reals~$\alpha_v$ and~$\beta_v$ associated with each vertex are fixed once and for all, which somewhat ties together the different probability distributions involved.

\subsection{Hard-core model} 
In the setting of \Cref{algo}, we need a probability distribution over the independent sets of a given graph~$H$. For instance, Molloy and Reed used the uniform distribution over the maximum independent sets of~$H$, and obtained the fractional Reed bound as a result (see Theorem~\ref{thm:Reedsbound}). As we will show in \Cref{sub-reed}, this bound is best possible when restricting to the maximum independent sets, even for trees. Therefore, we need to include non-maximum independent sets with non-zero probability in order to hope for improved bounds. Moreover, in order to perform a local analysis of the possible random outcomes, we need our probability distribution to have good relative independence between the random outcomes in a local part of the graph, and the ones outside this part. 

The probability distribution that we are going to use as a setting of \Cref{algo} is the hard-core distribution over the independent sets of a graph, which has the Spatial Markov Property. Given a family~$\cI$ of independent sets of a graph~$H$, and a positive real~$\lambda$, a random independent set~$\bI$ drawn according to the hard-core distribution at fugacity~$\lambda$ over~$\cI$ is such that
\[ \pr{\bI=I} = \frac{\lambda^{\abs{I}}}{Z_{\cI}(\lambda)},\]
for every~$I\in \cI$, where~$Z_\cI(\lambda)=\sum_{J\in \cI}\limits \lambda^{\abs{J}}$ is the \emph{partition function} associated with~$\bI$.

Along this work, we consider two possible families~$\cI$ of independent sets
of~$H$, the first one being the whole set~$\cI(H)$ of independent sets of~$H$. Note
that when~$\cI=\cI(H)$, and~$\lambda\to \infty$, the hard-core distribution
converges towards the uniform distribution over the maximum independent sets of~$H$.

\begin{lemma}[Spatial Markov Property]\label{lem:markov}
Given a graph~$H$, and a real~$\lambda>0$, let~$\bI$ be drawn according to the
hard-core distribution at fugacity~$\lambda$ over the independent sets~$\cI(H)$
of~$H$. Let~$X\subseteq V(H)$ be any given subset of vertices, and~$J$ any
possible outcome of~$\bI\setminus X$. Then, conditioned on the fact that~$\bI
\setminus X = J$, the random independent set~$\bI\cap X$ follows the hard-core
distribution at fugacity~$\lambda$ over the independent sets of~$H[X\setminus
N(J)]$.
\end{lemma}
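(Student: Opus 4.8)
The plan is to compute the conditional distribution directly from the definition of the hard-core measure, and observe that the normalising constants collapse to exactly the partition function of the hard-core model on the residual graph $H[X\setminus N(J)]$. First I would fix $X\subseteq V(H)$ and a set $J\subseteq V(H)\setminus X$ that arises as $\mathbf I\setminus X$ with positive probability; note that since $\mathbf I$ is always independent, $J$ is independent and moreover $J$ must be exactly the trace outside $X$, so the only independent sets $I\in\cI(H)$ with $I\setminus X=J$ are those of the form $I=J\cup S$ where $S\subseteq X$ and $J\cup S$ is independent. The condition that $J\cup S$ is independent, given that $J$ and (later) $S$ are independent, is precisely that $S$ contains no vertex of $N(J)$ and that $S$ is independent in $H[X]$ — in other words, $S\in\cI\big(H[X\setminus N(J)]\big)$. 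This combinatorial identification of the fibre is the heart of the argument, and it is where the girth/locality hypotheses are irrelevant: it works for any graph.

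Next I would carry out the conditional probability computation. Write $Z\coloneqq\sum_{K\in\cI(H)}\lambda^{|K|}$ for the global partition function. For $S\in\cI\big(H[X\setminus N(J)]\big)$,
\[
\pr{\mathbf I\cap X=S \;\middle|\; \mathbf I\setminus X=J}
=\frac{\pr{\mathbf I=J\cup S}}{\pr{\mathbf I\setminus X=J}}
=\frac{\lambda^{|J\cup S|}/Z}{\sum_{S'\in\cI(H[X\setminus N(J)])}\lambda^{|J\cup S'|}/Z}.
\]
Since $S$ and $J$ are disjoint, $|J\cup S|=|J|+|S|$, so the factor $\lambda^{|J|}$ and the factor $Z$ both cancel between numerator and denominator, leaving
\[
\pr{\mathbf I\cap X=S \;\middle|\; \mathbf I\setminus X=J}
=\frac{\lambda^{|S|}}{\sum_{S'\in\cI(H[X\setminus N(J)])}\lambda^{|S'|}},
\]
which is exactly the hard-core distribution at fugacity $\lambda$ over $\cI\big(H[X\setminus N(J)]\big)$.

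The only point that needs a little care — and the step I expect to be the main (minor) obstacle — is the bookkeeping for which pairs $(J,S)$ actually contribute, i.e. making the claim ``$I\setminus X=J$ and $I$ independent $\iff$ $I=J\cup S$ with $S\in\cI(H[X\setminus N(J)])$'' airtight. One has to check both inclusions: if $I\in\cI(H)$ with $I\setminus X=J$ then $S\coloneqq I\cap X$ is independent in $H$, hence in $H[X]$, and no $s\in S$ can be adjacent to any $j\in J\subseteq I$, so $S$ avoids $N(J)$ and lies in $\cI(H[X\setminus N(J)])$; conversely, for $S\in\cI(H[X\setminus N(J)])$ the set $J\cup S$ has no edge inside $J$, none inside $S$, and none between $J$ and $S$ (as $S\cap N(J)=\emptyset$), hence is independent, with trace $J$ outside $X$ and $S$ inside $X$. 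Once this bijection is established the cancellation above is immediate, and it also shows the event $\{\mathbf I\setminus X=J\}$ has positive probability iff $J$ is an independent set that occurs as such a trace, so the conditioning is well defined. This completes the proof.
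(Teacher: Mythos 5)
Your proof is correct and takes exactly the route the paper intends: the paper omits the argument entirely, remarking only that it ``follows from a simple consideration of the marginal probabilities,'' which is precisely the fibre identification and cancellation of $\lambda^{|J|}$ and the partition function that you carry out. Nothing is missing; your careful check that $\{I\in\cI(H):I\setminus X=J\}$ corresponds bijectively to $\cI\bigl(H[X\setminus N(J)]\bigr)$ is the whole content of the lemma.
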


The proof of this result is standard and follows from a simple consideration of the marginal
probabilities. It remains valid when we fix~$\lambda=\infty$, i.e. the uniform distribution over the maximum independent sets of any graph~$H$ has the Spatial Markov Property.
Things are more complicated with our second choice for~$\cI$, that is the
set~$\cIm(H)$ of maximal independent sets of~$H$. Indeed, in this setting, one has to make sure
that the local outcome of the independent set is compatible with the fact that the global outcome
of the independent set is maximal, i.e. there remains no uncovered vertices in~$H$. This
adds a new level of dependency, and we need the extra assumption~\eqref{eq:markov} introduced below in Lemma~\ref{lem:markov-max}, to be
able to handle it. For two disjoint subsets of vertices~$X$ and~$U$ of a graph~$G$, we
define~$P^2_X(U)$ to be the set of vertices~$x\in X$ such that there exists a path~$ux'x$ of
length~$2$ with~$u\in U$ and~$x'\in X$. In symbols, $P^2_X(U)=N\big(N(U)\cap X\big) \cap X$.

\begin{lemma}[Spatial Markov Property for maximal independent sets]\label{lem:markov-max}
Given a graph~$H$, and a real~$\lambda>0$, let~$\bI$ be drawn according to the
    hard-core distribution at fugacity~$\lambda$ over the maximal independent
    sets~$\cIm(H)$ of~$H$. Let~$X\subseteq V(H)$ be any given subset of
    vertices,~$J$ any possible outcome of~$\bI\setminus X$, and~$U\coloneqq
    (V(H)\setminus X) \setminus N[J]$ the set of vertices outside of~$X$
    that are uncovered (by~$J$). Moreover, we assume that
\begin{equation}\label{eq:markov}
    \abs{N(v) \cap X} \le 1\quad\text{for any vertex~$v\in V(H)\setminus X$.} \tag{$\star$}
\end{equation}
Then, conditioned on the fact that~$\bI \setminus X
    = J$, the random independent set~$\bI\cap X$ follows the hard-core distribution
    at fugacity~$\lambda$ over the maximal independent sets of
    \[H[X\setminus (N(J)\cup P^2_X(U)].\]
\end{lemma}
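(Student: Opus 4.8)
The plan is to compute the conditional distribution of $\bI\cap X$ directly from the definition of the hard-core distribution over $\cIm(H)$, and to identify exactly which subsets of $X$ can arise as $\bI\cap X$ once we condition on $\bI\setminus X = J$. Write $Y\coloneqq V(H)\setminus X$, so $J\subseteq Y$ is a fixed outcome, $U = Y\setminus N[J]$ is the set of vertices of $Y$ left uncovered by $J$, and set $X' \coloneqq X\setminus(N(J)\cup P^2_X(U))$. First I would fix a set $K\subseteq X$ and ask: for which $K$ is $J\cup K$ a maximal independent set of $H$? Independence of $J\cup K$ requires that $K$ be independent, that $K$ contain no vertex of $N(J)$, and that there be no edge between $K$ and $J$ — the last two conditions are the same, so $K\subseteq X\setminus N(J)$ and $K$ independent in $H[X\setminus N(J)]$. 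Maximality requires that every vertex of $H$ not in $J\cup K$ have a neighbour in $J\cup K$; vertices of $Y$ covered by $J$ are fine automatically, so the constraint is that every vertex of $U$ and every uncovered-by-$J$ vertex of $X$ must be covered by $K$.

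The next step is to untangle the maximality constraint using the hypothesis $(\star)$, namely $|N(v)\cap X|\le 1$ for every $v\notin X$. A vertex $u\in U$ has at most one neighbour in $X$; if it has none it can never be covered by $K\subseteq X$, which would contradict the assumption that $J$ is an outcome of $\bI\setminus X$ for some maximal $\bI$ — so in fact every $u\in U$ has exactly one neighbour $x(u)\in X$, and that neighbour is forced to lie in $K$. But $x(u)\notin N(J)$ (else $u$ would already be covered is not the point; rather $x(u)$ must be available to be chosen), and more importantly $x(u)$ is now a forced element of $K$. Here is where I expect the real work: I must check that these forced vertices $\{x(u):u\in U\}$, together with the requirement that $X$-vertices uncovered by $J$ get covered, are captured precisely by passing to the maximal independent sets of $H[X']$ rather than of $H[X\setminus N(J)]$. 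The role of $P^2_X(U)$ is exactly this: a vertex $x'\in P^2_X(U)$ lies on a path $u x(u) x'$ with $u\in U$; since $x(u)$ is forced into $K$, such an $x'$ can never be in $K$, so removing $P^2_X(U)$ from the ground set does not lose any admissible $K$, and conversely it automatically enforces that the forced vertices $x(u)$ are present (a maximal independent set of $H[X']$ must cover $x(u)\in X'$, but all neighbours of $x(u)$ within $X'$ — if any — are still there; one must argue that covering $x(u)$ inside $H[X']$ combined with the independence constraints forces $x(u)$ itself into $K$, which uses that $u$'s only $X$-neighbour is $x(u)$ so nothing else can cover the maximality obligation transmitted through $u$). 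The claim to nail down is therefore: $K\subseteq X$ satisfies "$J\cup K$ is a maximal independent set of $H$" if and only if $K$ is a maximal independent set of $H[X']$. One inclusion needs $(\star)$ to rule out a $u\in U$ with two $X$-neighbours (which would make "cover $u$ by $K$" a genuine disjunction and break the reduction to an induced-subgraph problem); the other inclusion is the bookkeeping with $P^2_X(U)$ just described, plus checking that $X$-vertices uncovered by $J$ are covered by $K$ exactly when $K$ is maximal in $H[X']$ — note an $X$-vertex uncovered by $J$ and not in $P^2_X(U)$ simply lies in $X'$, so "$K$ maximal in $H[X']$" covers it, while one in $P^2_X(U)$ is adjacent to some forced $x(u)\in K$ hence already covered.

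Once this combinatorial equivalence is established, the probabilistic conclusion is a one-line computation. For any maximal independent set $K$ of $H[X']$,
\[
\pr{\bI\cap X = K \;\middle|\; \bI\setminus X = J}
= \frac{\pr{\bI = J\cup K}}{\sum_{K'} \pr{\bI = J\cup K'}}
= \frac{\lambda^{|J|+|K|}}{\sum_{K'}\lambda^{|J|+|K'|}}
= \frac{\lambda^{|K|}}{\sum_{K'\in \cIm(H[X'])}\lambda^{|K'|}},
\]
where both sums range over maximal independent sets $K'$ of $H[X']$ by the equivalence, and the factor $\lambda^{|J|}$ cancels. This is exactly the hard-core distribution at fugacity $\lambda$ over $\cIm(H[X'])$, as claimed. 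The main obstacle is entirely in the middle paragraph: verifying the set equality $\{K : J\cup K\in\cIm(H)\} = \cIm(H[X'])$, and in particular seeing clearly why hypothesis $(\star)$ is exactly what makes the maximality condition "factor" through an induced subgraph instead of becoming a tangle of covering disjunctions — the removal of $N(J)$ handles independence, and the removal of $P^2_X(U)$ together with $(\star)$ handles maximality.
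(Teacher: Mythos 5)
Your overall strategy matches the paper's: characterise the possible traces $\bI\cap X$, given the conditioning event, as exactly $\cIm(H[X'])$ with $X'=X\setminus(N(J)\cup P^2_X(U))$, and then conclude by the one-line proportionality computation, which is fine. The forward inclusion (an admissible $K$ lies in $X'$ and is maximal there) is essentially right. However, the step you yourself flag as needing an argument --- that every $K\in\cIm(H[X'])$ contains the forced vertices $x(u)$, $u\in U$, which is precisely what makes $J\cup K$ maximal in $H$ --- is left open, and the mechanism you sketch for it would not work. Maximality of $K$ is maximality \emph{inside} $H[X']$, and $u\notin X'$, so there is no ``maximality obligation transmitted through $u$'' visible to $K$; likewise your remark that the neighbours of $x(u)$ within $X'$ ``are still there'' points in the wrong direction. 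The correct observation (and the one the paper makes) is that $x(u)$ has \emph{no} neighbour in $X'$: any neighbour $x'\in X$ of $x(u)$ lies on the path $u\,x(u)\,x'$ with $u\in U$, hence $x'\in P^2_X(U)$ and is removed. So $x(u)$ is an isolated vertex of $H[X']$, and every maximal independent set of $H[X']$ must contain it; that is the whole point of deleting $P^2_X(U)$.

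A second, smaller omission: you never verify that $x(u)\in X'$ at all, i.e.\ that $x(u)\notin N(J)$ and $x(u)\notin P^2_X(U)$; your parenthetical about $x(u)\notin N(J)$ trails off without an argument. This is where the hypothesis that $J$ is a \emph{possible} outcome is used again: fixing any realisation $I_X$ of $\bI\cap X$ compatible with $J$, each $u\in U$ forces its unique $X$-neighbour (uniqueness by~\eqref{eq:markov}) into $I_X$, and since $I_X\cup J$ is independent and $N_X(U)\subseteq I_X$ is independent, one gets $N_X(U)\cap N(J)=\varnothing$ and $N_X(U)\cap P^2_X(U)=\varnothing$, hence $N_X(U)\subseteq X'$. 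This is exactly how the paper proceeds (it extracts this fact from its analysis of an arbitrary realisation before proving the converse inclusion). With these two points repaired, your argument coincides with the paper's proof.
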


\begin{proof}
    Set~$W \coloneqq X \setminus (N(J)\cup P^2_X(U))$.
First let~$I_X$ be any possible realisation of~$\bI \cap X$, conditioned on the fact
    that~$\bI\setminus X = J$.
    We prove that~$I_X \in \cIm(H[W])$. To this end, we begin by showing that~$N_X(U) \subseteq I_X$. By the definitions of~$J$ and~$U$,
    every vertex in~$U$ must be adjacent to a vertex in~$X$, and hence by~\eqref{eq:markov} for each~$u\in U$
    there exists a unique vertex~$v_u$ in~$X$ that is adjacent to~$u$. It follows that~$N_X(U)$
    is contained in~$I_X$. This in particular implies that no vertex in~$P^2_X(U)$
    can belong to~$I_X$, and hence~$I_X\subseteq W$. We note for later that we just
    established that~$N_X(U)$ is a set of isolated vertices of~$H[W]$ (that is, these
    vertices belong to~$W$ and have no neighbour in~$H[W]$).

    Next we observe that~$I_X$ is maximal in~$H[W]$. Indeed, let~$w\in W\setminus I_X$. Because~$I_X\cup J$
    is a maximal independent set of~$H$, there exists~$v\in I_X\cup J$ that is adjacent to~$w$ in~$H$.
    Since~$W\subseteq X\setminus N(J)$ by definition, we deduce that~$v\in I_X$ and hence~$I_X$ is maximal
    in~$H[W]$.

Second, given any set~$I_X \in \cIm(H[W])$, the set~$I_X\cup J$ is a
valid realisation of~$\bI$. Indeed,~$I_X$ and~$J$ are independent sets, and so is their union
    as~$I_X \cap N(J) = \varnothing$. To prove that~$I_X\cup J$ is maximal in~$H$, it
    suffices to show that every vertex~$x$ in~$U\cup (X\setminus W)$ has a neighbour
    in~$I_X$. As reported earlier,~$N_X(U)$ is contained in~$W$ and forms a set
    of isolated vertices in~$H[W]$. Therefore,~$N_X(U)$ is contained in every maximal independent
    of~$H[W]$, and hence in~$I_X$. Since every vertex in~$U$ has a neighbour in~$X$,
    it therefore only remains to deal with the case where~$x\in X\setminus W$.
    Then~$x\in N(J)\cup P^2_X(U)$, and hence~$x$ has a neighbour in~$J\cup N_X(U)$, which
    is contained in~$I_X\cup J$.

In conclusion, the set of realisations of~$\bI \cap X$ is exactly~$\cIm(H[W])$,
    and each such realisation~$I_X$ has a probability proportional to
~$\lambda^{\abs{I_X}+\abs{J}}$, and hence proportional to
~$\lambda^{\abs{I_X}}$ since~$J$ is fixed. This finishes the proof.
\end{proof}

%

\subsection{Independence ratio} 
We state two lemmas which can be proved in similar ways. We only present the
proof of the second one, the argument for the first one being very close but
a little simpler.

\begin{lemma}\label{ratio-vertex}
    Let~$r$ be a positive integer and~$G$ be a~$d$-regular graph. Let~$\alpha_0,\dotsc,\alpha_r$ be real
    numbers such that~$\sum_{i=1}^{r}\alpha_i(d-1)^{i-1}\ge0$.  Assume that there exists a probability
    distribution~$p$ on~$\cI_{\max}(G)$ such that
\begin{equation}
\forall v\in V(G),\quad \sum_{i=0}^r \alpha_i\esp{\mathbf{X}_i(v)} \geq 1,
\end{equation}
where~$\mathbf{X}_i(v)$ is the random variable counting the number of paths of
    length~$i$ between~$v$ and a vertex belonging to a random independent set~$\bI$
    chosen following~$p$.  Then
\begin{equation}
    \frac{\abs{V(G)}}{\alpha(G)} \le \alpha_0 + \sum_{i=1}^{r} \alpha_i d(d-1)^{i-1}.
\end{equation}
\end{lemma}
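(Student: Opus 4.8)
The plan is to exploit the $d$-regularity of $G$ to convert the given per-vertex inequality into a global counting identity, and then to bound the size of $\bI$ from above using the independence number. First I would sum the hypothesis $\sum_{i=0}^r \alpha_i\,\esp{\bX_i(v)} \ge 1$ over all $v\in V(G)$, obtaining
\[
\sum_{i=0}^r \alpha_i \sum_{v\in V(G)}\esp{\bX_i(v)} \ge n.
\]
The key observation is that for a fixed independent set $I$ and a fixed vertex $u\in I$, the number of vertices $v$ admitting a path of length $i$ ending at $u$ is governed by the local structure around $u$; because $G$ is $d$-regular, the number of walks (not paths) of length $i$ from $u$ equals $d(d-1)^{i-1}$ for $i\ge 1$ and $1$ for $i=0$. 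Summing $\bX_i(v)$ over $v$ counts, for each $w\in\bI$, the number of paths of length $i$ from $w$; I would bound this above by the number of \emph{walks} of length $i$ from $w$, which is exactly $d(d-1)^{i-1}$ (resp.\ $1$ when $i=0$), independent of $w$. Hence $\sum_{v}\esp{\bX_i(v)} \le d(d-1)^{i-1}\,\esp{|\bI|}$ for $i\ge1$ and $=\esp{|\bI|}$ for $i=0$.

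Here is where the sign hypothesis $\sum_{i=1}^r \alpha_i(d-1)^{i-1}\ge 0$ enters: the bound ``paths $\le$ walks'' goes the right way only for those $i$ with $\alpha_i\ge0$, so I cannot simply bound term by term. Instead I would group the terms $i\ge1$: writing $S \coloneqq \sum_{i=1}^r \alpha_i \sum_v \esp{\bX_i(v)}$, I claim $S \le \esp{|\bI|}\sum_{i=1}^r \alpha_i d(d-1)^{i-1} = d\,\esp{|\bI|}\sum_{i=1}^r\alpha_i(d-1)^{i-1}$. One clean way to see this is to note that for each $w\in\bI$ the total signed path count $\sum_{i=1}^r\alpha_i\cdot(\#\text{paths of length }i\text{ from }w)$ is at most $\sum_{i=1}^r\alpha_i\cdot(\#\text{walks of length }i\text{ from }w)$ whenever the \emph{partial} alternating sums behave suitably — more robustly, one peels off the outermost layer: a path of length $i$ from $w$ is a path of length $i-1$ from $w$ to some $x$ followed by an edge to a new vertex, and the number of such extensions is at most $d-1$ (at most $d$ when $i=1$), so $\bX_{i}$-counts satisfy a recursive domination that, combined with $\sum_{i\ge1}\alpha_i(d-1)^{i-1}\ge0$, yields the displayed inequality. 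I would carry this out by induction on $r$, which I expect to be the main technical obstacle — making the ``non-backtracking / no-repeat'' bookkeeping precise so that the hypothesis on the $\alpha_i$ is used exactly once and with the correct sign.

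Finally I would assemble the pieces. Since $\bI$ is an independent set of $G$, we have $|\bI|\le\alpha(G)$ deterministically, hence $\esp{|\bI|}\le\alpha(G)$. Combining,
\[
n \le \alpha_0\,\esp{|\bI|} + d\,\esp{|\bI|}\sum_{i=1}^r\alpha_i(d-1)^{i-1} \le \alpha(G)\!\left(\alpha_0 + \sum_{i=1}^r\alpha_i d(d-1)^{i-1}\right),
\]
provided the bracketed quantity is non-negative, which is exactly guaranteed by the hypothesis $\sum_{i=1}^r\alpha_i(d-1)^{i-1}\ge0$ together with $\alpha_0\ge0$ (or, if $\alpha_0$ could be negative, one absorbs it — but in the intended applications $\alpha_0>0$). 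Dividing by $\alpha(G)$ gives $n/\alpha(G) \le \alpha_0 + \sum_{i=1}^r\alpha_i d(d-1)^{i-1}$, as claimed. The parallel Lemma for the edge-rooted version (Lemma to follow in the paper) would be proved the same way, the only difference being that one sums over edges rather than vertices and the walk counts start from an edge, which slightly changes the combinatorial factors.
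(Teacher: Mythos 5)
Your global strategy is the same as the paper's: the paper proves the edge version (Lemma~\ref{ratio-edge}) by summing the hypothesis over all edges, exchanging the order of summation so that each vertex of~$\bI$ contributes its number of paths of the prescribed length, and finishing with $\esp{\abs{\bI}}\le\alpha(G)$; the vertex version is stated to follow by the same, slightly simpler, computation (summing over vertices). Where you diverge is the middle step, and that is where your proposal has a genuine gap. The paper does not compare paths with walks at all: it uses that in a $d$-regular graph the number of paths of length~$i$ issued from any fixed vertex is $d(d-1)^{i-1}$, as an identity, so that $\esp{\sum_{v}\bX_i(v)}=d(d-1)^{i-1}\,\esp{\abs{\bI}}$ and no sign bookkeeping on the $\alpha_i$ is ever needed. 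You instead replace this by the one-sided bound ``paths $\le d(d-1)^{i-1}$'' (incidentally, $d(d-1)^{i-1}$ counts \emph{non-backtracking} walks; the number of all walks of length $i$ is $d^i$), and you then need the signed domination $\sum_{i\ge1}\alpha_i\,p_i(u)\le\sum_{i\ge1}\alpha_i\,d(d-1)^{i-1}$, where $p_i(u)$ is the number of paths of length $i$ from $u$. You leave this as an unproven ``peeling induction'', and in fact it is not a consequence of the hypothesis $\sum_{i\ge1}\alpha_i(d-1)^{i-1}\ge0$: take $d=3$, $r=3$, $(\alpha_1,\alpha_2,\alpha_3)=(4,0,-1)$, so that the hypothesis holds with equality and the right-hand side is $0$, while in any $3$-regular graph in which some non-backtracking walk of length $3$ from $u$ fails to be a path (e.g.\ $K_4$) the left-hand side equals $12-p_3(u)>0$. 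So the step you yourself flag as the main technical obstacle cannot be repaired in the form you propose.

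Two smaller points. First, the concern that motivated your detour only arises when the girth is small compared with $r$ (that is when ``number of paths $=d(d-1)^{i-1}$'' needs justification); in every application in the paper the girth exceeds the lengths involved, and the linear programs additionally impose $\alpha_i\ge0$, in which case your term-by-term bound already suffices. Second, at the end you do not need $\alpha_0\ge0$, which is not among the hypotheses: once you have $\esp{\abs{\bI}}\bigl(\alpha_0+\sum_{i\ge1}\alpha_i d(d-1)^{i-1}\bigr)\ge n$, the bracketed quantity is automatically positive because $\esp{\abs{\bI}}>0$ and $n>0$, so replacing $\esp{\abs{\bI}}$ by $\alpha(G)$ is legitimate; this is also the only place where the assumption $\sum_{i\ge1}\alpha_i(d-1)^{i-1}\ge0$ is meant to intervene.
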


\begin{lemma}\label{ratio-edge}
    Let~$r$ be a positive integer and~$G$ be a~$d$-regular graph. Let~$\alpha_0,\dotsc,\alpha_r$ be real
    numbers such that~$\sum_{i=0}^{r}\alpha_i(d-1)^{i}\ge0$.  Assume that there exists a probability
    distribution~$p$ on~$\cI_{\max}(G)$ such that 
\begin{equation}
\label{edge-inequality}
\forall e\in E(G),\quad \sum_{i=0}^r \alpha_i\esp{\mathbf{X}_i(e)} \geq 1,
\end{equation}
where~$\mathbf{X}_i(e)$ is the random variable counting the number of paths of length~$i+1$ starting with~$e$ and ending at a vertex belonging to a random independent set~$\bI$ chosen following~$p$.
Then
\begin{equation}
    \frac{\abs{V(G)}}{\alpha(G)} \le \sum_{i=0}^{r} 2\alpha_i (d-1)^i.
\end{equation}
\end{lemma}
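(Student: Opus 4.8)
The plan is to relate the average, over all edges $e\in E(G)$, of the quantity $\sum_{i=0}^r \alpha_i \esp{\mathbf{X}_i(e)}$ to the expected size of the random independent set $\bI$, and then to invoke the elementary bound $\esp{|\bI|}\le \alpha(G)$. First I would sum inequality \eqref{edge-inequality} over all $e\in E(G)$; since $|E(G)| = dn/2$, the left-hand side becomes $\sum_{i=0}^r \alpha_i \sum_{e\in E(G)}\esp{\mathbf{X}_i(e)}$ and the right-hand side is $dn/2$. The heart of the argument is a double-counting identity: for a fixed outcome $I$ of $\bI$ and a fixed $i$, the quantity $\sum_{e\in E(G)} \mathbf{X}_i(e)$ counts ordered pairs $(e, P)$ where $P$ is a path of length $i+1$ starting with the edge $e$ and ending in $I$. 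Reversing the path, each such configuration is a walk (in fact a path, because $G$ has girth constraints only indirectly — but here we do not even need a path) of length $i+1$ ending at a vertex of $I$, with its last edge distinguished; equivalently it is a choice of a vertex $u\in I$ together with a non-backtracking path of length $i+1$ rooted at $u$. Since $G$ is $d$-regular, the number of non-backtracking walks of length $i+1$ from a fixed vertex is exactly $d(d-1)^i$, so $\sum_{e\in E(G)}\mathbf{X}_i(e) = d(d-1)^i\,|I|$ — wait, I must be careful about whether $\mathbf{X}_i(e)$ counts genuine paths (no repeated vertices) rather than non-backtracking walks; in a general $d$-regular graph these differ, and the clean identity $\sum_e \mathbf{X}_i(e) \le d(d-1)^i |I|$ holds with an inequality (every path is a non-backtracking walk, but not conversely). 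The sign condition $\sum_{i=0}^r \alpha_i(d-1)^i \ge 0$ is precisely what lets me pass from these termwise inequalities to the combined one without the inequality reversing.

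Carrying this out: taking expectations in the identity/inequality $\sum_{e\in E(G)} \mathbf{X}_i(e) \le d(d-1)^i |\bI|$ gives $\sum_{e\in E(G)}\esp{\mathbf{X}_i(e)} \le d(d-1)^i\,\esp{|\bI|}$. The subtlety with signs is that I want to multiply by $\alpha_i$ and sum, and $\alpha_i$ may be negative, so I cannot simply use the per-$i$ inequality in the direction above for every $i$. The standard fix — and the reason the hypothesis $\sum_{i=0}^r\alpha_i(d-1)^i\ge 0$ appears — is that for $i\ge 1$ one in fact has the exact identity $\sum_{e}\mathbf{X}_i(e) = d(d-1)^i |\bI|$ when $G$ has girth larger than $i+1$, but without a girth assumption one should instead argue via a weighting that makes the relevant combination monotone; concretely, one shows $\sum_{i=0}^r \alpha_i \sum_{e}\mathbf{X}_i(e) \ge \big(\sum_{i=0}^r \alpha_i(d-1)^i\big)\cdot (\text{something} \ge 0)$ is not quite right either. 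I expect the actual mechanism, mirroring the proof of Lemma \ref{ratio-vertex}, to be: decompose each path by its endpoint $u\in\bI$ and observe that the number of length-$(i+1)$ paths from a given vertex $u$ into $G$ is at most $d(d-1)^i$, with a deficiency that is itself controlled; the hypothesis $\sum_i\alpha_i(d-1)^i\ge0$ guarantees that replacing each path-count by its maximum $d(d-1)^i$ only increases $\sum_i\alpha_i(\cdot)$. Thus $\esp{\sum_{e}\sum_i\alpha_i\mathbf{X}_i(e)} \le \esp{|\bI|}\sum_{i=0}^r \alpha_i d(d-1)^i$.

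Combining, $\tfrac{dn}{2} = \sum_{e}1 \le \sum_{e}\sum_i\alpha_i\esp{\mathbf{X}_i(e)} \le \alpha(G)\sum_{i=0}^r \alpha_i d(d-1)^i$, and dividing by $d\alpha(G)/2$ yields $\tfrac{n}{\alpha(G)} \le \sum_{i=0}^r 2\alpha_i(d-1)^i$, as claimed. The main obstacle, and the step demanding the most care, is the counting argument bounding $\sum_{e\in E(G)}\mathbf{X}_i(e)$ in terms of $|\bI|$ and correctly tracking how the sign condition $\sum_{i=0}^r\alpha_i(d-1)^i\ge 0$ lets one replace the true (possibly smaller) path counts by the regular-tree upper bound $d(d-1)^i$ without breaking the inequality — this is exactly the edge-analogue of the bookkeeping in Lemma \ref{ratio-vertex}, which the authors say they omit, so I would present it with the path/non-backtracking-walk distinction made explicit.
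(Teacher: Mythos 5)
Your overall route is the same as the paper's: sum \eqref{edge-inequality} over the $nd/2$ edges, double-count each path by the vertex of $\bI$ at which it ends, and finish with $\esp{\abs{\bI}}\le\alpha(G)$. But the way you execute the central counting step leaves a genuine gap. The paper takes the per-vertex count to be an exact identity: the contribution of a vertex $v\in\bI$ to $\sum_{e\in E(G)}\mathbf{X}_i(e)$ is the number of paths of length $i+1$ starting at $v$, which is $d(d-1)^i$ in a $d$-regular graph (exact whenever non-backtracking walks of length at most $r+1$ are paths, which the girth hypotheses guarantee in every application of the lemma), so that
\[
\esp{\sum_{e\in E(G)}\mathbf{X}_i(e)}=d(d-1)^i\,\esp{\abs{\bI}}
\]
for every $i$; consequently no sign issue arises when multiplying by the $\alpha_i$ and summing, and the hypothesis $\sum_{i=0}^r\alpha_i(d-1)^i\ge0$ is needed only at the very end, to pass from $\esp{\abs{\bI}}\cdot\sum_i 2\alpha_i(d-1)^i\ge n$ to $\alpha(G)\cdot\sum_i 2\alpha_i(d-1)^i\ge n$. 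You instead keep only the one-sided bound $\sum_e\mathbf{X}_i(e)\le d(d-1)^i\abs{\bI}$ and then claim that the hypothesis $\sum_i\alpha_i(d-1)^i\ge0$ ``guarantees that replacing each path-count by its maximum $d(d-1)^i$ only increases $\sum_i\alpha_i(\cdot)$''. That claim is false: if some $\alpha_i<0$ and the count $p_i(v)$ of paths of length $i+1$ from $v$ is strictly below $d(d-1)^i$, the replacement decreases the term $\alpha_i p_i(v)$, and the global condition on $\sum_i\alpha_i(d-1)^i$ gives no per-vertex control. For instance with $d=3$, $(\alpha_0,\alpha_1,\alpha_2)=(4,0,-1)$ the hypothesis holds ($4+0-4=0$), yet at a vertex of $K_4$ one has $(p_0,p_1,p_2)=(3,6,6)$, so the true combination is $6$ while the replaced one is $0$. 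Hence the inequality you actually need, $\sum_e\sum_i\alpha_i\esp{\mathbf{X}_i(e)}\le\esp{\abs{\bI}}\sum_i\alpha_i d(d-1)^i$, is not established by your argument; the admittedly speculative passage in the middle of your write-up does not close it.

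The repair is to do what the paper does: treat the count as exact (either by reading $\mathbf{X}_i$ as counting non-backtracking walks, or by noting that in every invocation of the lemma the girth exceeds $r+1$, so such walks are paths), which turns the key step into an identity valid for arbitrary signs of the $\alpha_i$, and reserve the hypothesis $\sum_i\alpha_i(d-1)^i\ge0$ for the final replacement of $\esp{\abs{\bI}}$ by $\alpha(G)$ --- a step your last display uses implicitly but never justifies, since multiplying the inequality $\esp{\abs{\bI}}\le\alpha(G)$ by $\sum_i\alpha_i d(d-1)^i$ preserves its direction only because that factor is non-negative.
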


\begin{proof}
    Given an integer~$i\in\{0,\dotsc,r\}$ and an edge~$e$ of~$G$, the contribution of an arbitrary vertex~$v\in\bI$
    to~$\mathbf{X}_i(e)$ is the number of paths of length~$i+1$ starting at~$v$
    and ending with~$e$.  It follows that the total contribution of any
    vertex~$v\in\bI$ to~$\sum_{e\in E(G)}\mathbf{X}_i(e)$ is the number of
    paths of~$G$ with length~$i+1$ that start at~$v$, which is at most~$d(d-1)^i$
    since~$G$ is a~$d$-regular graph.  Consequently,
    \[ \esp{\sum_{e\in E(G)} \mathbf{X}_i(e)} \le \sum_{v\in V(G)} \pr{v\in \bI} d(d-1)^i. \]
We now sum~\eqref{edge-inequality} over all edges of~$G$.

\begin{align*}
    \sum_{e\in E(G)} \sum_{i=0}^r \alpha_i \esp{\mathbf{X}_i(e)} &\ge  \abs{E(G)} = \frac{d\cdot\abs{V(G)}}{2} \\
    \sum_{i=0}^r  \alpha_i \sum_{e \in E(G)} \esp{\mathbf{X}_i(e)} &\ge  \frac{d\cdot\abs{V(G)}}{2} \\
    \sum_{i=0}^r  \alpha_i \sum_{v\in V(G)} \pr{v\in \bI} d(d-1)^i &\ge  \frac{d\cdot\abs{V(G)}}{2} \\
    \sum_{i=0}^r 2 \alpha_i \esp{\abs{\bI}} (d-1)^i &\ge  \abs{V(G)} \\
    \sum_{i=0}^{r} 2\alpha_i (d-1)^i &\ge  \frac{\abs{V(G)}}{\alpha(G)}.
\qedhere
\end{align*}
\end{proof}

The next lemma allows us to generalise Lemmas~\ref{ratio-vertex} and~\ref{ratio-edge} to
non-regular graphs. To this end, we use a standard argument coupled with the existence of
specific vertex-transitive type-$1$ regular graphs with any given degree and girth. These
are provided by a construction of Exoo and Jajcay~\cite{ExJa08} in the proof of their
Theorem~19, which is a direct generalisation of a construction for cubic graphs designed by
Biggs~\cite[Theorem~6.2]{Big98}. We slightly reformulate their theorem, the mentioned
edge-colouring and transitivity property following simply from the fact that the graph constructed is
a Cayley graph obtained from a generating set consisting only of involutions.
Given a graph~$G$ endowed with an edge-colouring~$c$, an automorphism~$f$
of~$G$ is \emph{$c$-preserving} if~$c(\{f(u),f(v)\}) = c(u,v)$ for each edge~$\{u,v\}$
of~$G$. The graph~$G$ is \emph{$c$-transitive} if for every pair~$(u,v)$ of
vertices of~$G$ there exists a~$c$-preserving automorphism~$f$ of~$G$ such that~$f(u)=v$.

\begin{thm}[Exoo \& Jajcay, 2013]\label{thm-exja}
For every integers~$d$ and~$g$ both at least~$3$, there exists a
~$d$-regular graph~$H$ with girth at least~$g$ along with a proper edge-colouring~$c$
    using~$d$ colours such that~$H$ is~$c$-transitive.
\end{thm}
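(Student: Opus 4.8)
The plan is to realise~$H$ as a Cayley graph over a suitable group with a generating set consisting entirely of involutions, one for each of the~$d$ prescribed colours, and to arrange that the group is large enough (and its Cayley graph sufficiently ``spread out'') that no short cycle can appear. Recall the general principle behind the Biggs construction~\cite{Big98} for cubic graphs and its generalisation in~\cite{ExJa08}: start from the free product~$\Gamma \coloneqq \mathbb{Z}_2 * \mathbb{Z}_2 * \cdots * \mathbb{Z}_2$ of~$d$ copies of~$\mathbb{Z}_2$, with the canonical generators~$s_1,\dotsc,s_d$, each an involution. The (right) Cayley graph~$\mathrm{Cay}(\Gamma,\{s_1,\dotsc,s_d\})$ is precisely the infinite $d$-regular tree~$T_d$, since a reduced word over the~$s_i$ in the free product is exactly a non-backtracking walk in~$T_d$. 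Colour the edge~$\{\gamma,\gamma s_i\}$ with colour~$i$; this edge-colouring is proper (no two edges at a vertex share a colour, as the~$s_i$ are distinct) and it is preserved by every left translation~$\gamma \mapsto g\gamma$, which also acts transitively on the vertices. So~$T_d$ with this colouring is $c$-transitive.

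Next I would pass to a finite quotient. Choose a normal subgroup~$N \trianglelefteq \Gamma$ of finite index such that~$N$ contains no reduced word of length less than~$g$; equivalently, the natural projection~$\Gamma \to \Gamma/N$ is injective on the ball of radius~$\lfloor g/2 \rfloor$ in~$T_d$. Then~$H \coloneqq \mathrm{Cay}(\Gamma/N,\{s_1 N,\dotsc,s_d N\})$ is a finite $d$-regular graph, and the girth condition is exactly the statement that~$N$ contains no short relator: a cycle of length~$\ell < g$ through the identity of~$H$ corresponds to a reduced word of length~$\ell$ lying in~$N$, which is forbidden. The inherited edge-colouring (edge~$\{\bar\gamma,\bar\gamma \bar s_i\}$ gets colour~$i$) is still proper, and left translation by elements of~$\Gamma/N$ still gives a vertex-transitive family of $c$-preserving automorphisms, so~$H$ is $c$-transitive. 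I would either invoke the residual finiteness of free products of finite groups (which are residually finite, indeed linear), or, to get an explicit handle on the girth, take~$N$ to be the intersection of the kernels of all homomorphisms from~$\Gamma$ to symmetric groups of bounded size, or use the explicit modular-group-style quotients of~\cite{ExJa08}; any of these yields a finite index subgroup avoiding all reduced words up to the required length.

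The main obstacle is the girth requirement: one must produce a finite quotient in which \emph{no} reduced word of length~$<g$ becomes trivial, and in which simultaneously the generators remain distinct non-identity involutions (so that the Cayley graph is genuinely $d$-regular and simple, with no loops or multi-edges). This is precisely where residual finiteness of~$\Gamma = \mathbb{Z}_2^{*d}$ is used --- it guarantees that the finitely many non-trivial reduced words of length~$<g$ can all be separated from the identity in a common finite quotient --- and it is the content one borrows from~\cite{ExJa08,Big98}. The transitivity and edge-colouring assertions, by contrast, are essentially automatic from the Cayley-graph structure, as the excerpt already remarks: they ``follow simply from the fact that the graph constructed is a Cayley graph obtained from a generating set consisting only of involutions.'' Once~$H$ and its proper $d$-edge-colouring~$c$ are in hand, vertex-transitivity via left translations (which are $c$-preserving) completes the proof.
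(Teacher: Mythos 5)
Your argument is correct, and it is essentially the approach behind the result as the paper uses it: the paper does not reprove Theorem~\ref{thm-exja} but cites Exoo and Jajcay~\cite{ExJa08} (generalising Biggs~\cite{Big98}), whose construction is exactly a Cayley graph over a group generated by $d$ involutions, with the proper edge-colouring and $c$-transitivity coming for free from left translations. Your reconstruction via finite quotients of $\mathbb{Z}_2^{*d}$ in which no nontrivial reduced word of length less than $g$ dies (guaranteed by residual finiteness) is a valid and standard way to supply the girth part, so there is nothing to object to.
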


\begin{lemma}\label{regular}
From any graph~$G$ of maximum degree~$d$ and girth~$g$, we can construct a
$d$-regular graph~$\varphi(G)$ of girth~$g$ whose vertex set can be
partitioned into induced copies of~$G$, and such that any vertex~$v\in G$ can
be sent to any of its copies through an automorphism.
\end{lemma}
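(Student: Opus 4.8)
The plan is to build $\varphi(G)$ as a ``blow-up by a Cayley graph'' of $G$, using Theorem~\ref{thm-exja} with parameters $d$ and $g$ to obtain the scaffolding. First I would fix a proper edge-colouring of $G$ itself with $d$ colours (possible by Vizing, after adding pendant edges or otherwise padding $G$ to be $d$-regular is \emph{not} what we want---instead just take any proper edge-colouring of $G$ with colours $\{1,\dots,\chi'(G)\}\subseteq\{1,\dots,d\}$, using $\chi'(G)\le d+1$? no: since $\Delta(G)\le d$, Vizing gives $\chi'(G)\le d+1$, so to be safe one takes $d$ replaced by $d+1$ in the scaffold, or---cleaner---one notes that $G$ can be edge-coloured with $d$ colours after discarding at most a perfect matching; the paper likely just pads $G$ to a $d$-regular graph $G'$ of girth $g$ first, edge-colours $G'$ properly with $d$ colours if it is class~$1$, and restricts). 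So the first real step is: embed $G$ into a $d$-regular graph $G^\star$ of girth at least $g$ (add vertices/edges without creating short cycles---e.g. take two disjoint copies of $G$ and add a large-girth $d$-regular bipartite-ish gadget between the deficient vertices, or invoke a known ``regularization preserving girth'' lemma) and properly edge-colour $G^\star$ with $d$ colours by using that the scaffold $H$ from Theorem~\ref{thm-exja} is itself $d$-edge-coloured and $c$-transitive.

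The construction itself: let $H$ be the $c$-transitive $d$-regular graph of girth $\ge g$ from Theorem~\ref{thm-exja}, with vertex set the group $\Gamma$ and generating set $S=\{s_1,\dots,s_d\}$ of involutions, where $c$ assigns colour $i$ to every edge $\{\gamma,\gamma s_i\}$. Take $V(\varphi(G)) := V(G)\times\Gamma$, and for each edge $uv$ of $G^\star$ of colour $i$, and each $\gamma\in\Gamma$, put an edge between $(u,\gamma)$ and $(v,\gamma s_i)$. Then for fixed $\gamma$ the map $u\mapsto(u,\gamma)$ is an induced copy of $G^\star$ (hence of $G$): two vertices $(u,\gamma),(v,\gamma)$ are adjacent iff some colour-$i$ edge $uv$ exists with $\gamma s_i=\gamma$, i.e. $s_i=e$, impossible since the $s_i$ are nontrivial involutions---so within a fibre the only edges are... wait, that shows there are \emph{no} edges inside a fibre, which is wrong. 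The correct indexing uses the edge-colour to pick the \emph{generator} so that copies of $G$ sit as $\{u\}\times(\text{coset})$; the standard fix is to index copies by right cosets of a suitable subgroup, or---following Biggs and Exoo--Jajcay literally---to take the \emph{lift}/\emph{voltage graph} construction: $\varphi(G)$ has vertex set $V(G^\star)\times\Gamma$ with $(u,\gamma)\sim(v,\gamma')$ iff $uv\in E(G^\star)$ has colour $i$ and $\gamma'=\gamma s_i$; then the copies of $G^\star$ are the sets $V(G^\star)\times\{\gamma\}$ for the induced subgraph where we \emph{contract} the lift---no. The clean statement is: $\varphi(G)$ is the graph on $V(G^\star)\times V(H)$ where $(u,x)\sim(v,y)$ iff ($u=v$ and $xy\in E(H)$) is wrong too. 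Let me state what I would actually write: take $\varphi(G)$ to be the tensor-type construction where $(u,x)\sim(v,y)$ iff $uv\in E(G^\star)$, $xy\in E(H)$, \emph{and} $c_{G^\star}(uv)=c_H(xy)$; the fibres $V(G^\star)\times\{x\}$ are then independent, but the fibres $\{u\}\times V(H)$ carry a copy of $H$, which is the wrong direction again.

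Given the confusion above, the honest plan I would commit to in the paper is the \emph{coloured lift} (permutation voltage) construction exactly as in Biggs \cite{Big98} and Exoo--Jajcay \cite{ExJa08}: Step 1, regularize---produce a $d$-regular graph $G^\star\supseteq G$ with $\girth(G^\star)\ge\girth(G)=g$ and whose vertex set partitions into copies of $G$ (e.g. glue several copies of $G$ along a high-girth bipartite filler on the low-degree vertices; one must check no short cycle is created, which is routine since the filler can be taken with girth $>$ the number of vertices added). Step 2, properly $d$-edge-colour $G^\star$: this needs $G^\star$ to be class~$1$, which we arrange in Step~1 by building $G^\star$ as a spanning subgraph of the $c$-transitive graph $H$ from Theorem~\ref{thm-exja} restricted appropriately, inheriting $c$. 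Step 3, form the lift $\varphi(G):=G^\star\times_c H$ on $V(G^\star)\times V(H)$ with $(u,x)\sim(v,y)$ iff $uv\in E(G^\star)$ with $c(uv)=i$ and $y=x\cdot s_i$ in the group of $H$; verify $d$-regularity (each $(u,x)$ has exactly $\deg_{G^\star}(u)=d$ neighbours), verify girth $\ge g$ (a short cycle in $\varphi(G)$ projects to a closed walk in $G^\star$ \emph{and} to a closed walk in $H$ whose colour sequence multiplies to the identity in $\Gamma$; a cycle of length $<g$ would give one in $H$ of length $<g$ unless it projects to a closed walk in $G^\star$ of length $<g$, i.e. a short cycle in $G^\star$---either way a contradiction). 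Step 4, the copies of $G$: for each $x\in\Gamma$, $V(G)\times\{x\}$ does \emph{not} span a copy directly (the colours shift the second coordinate), but $\{(u, x_u):u\in V(G^\star)\}$ for a ``colour-flat'' section---pick a spanning forest $T$ of $G^\star$, root it, and set $x_u:=$ the product of the generators along the $T$-path from the root; then $V(G^\star)\times\{x\}$-translates of this section give induced copies of $G^\star$ partitioning $V(\varphi(G))$, and $c$-transitivity of $H$ plus the group action supply the automorphism sending a vertex to any of its copies. \textbf{The main obstacle} I expect is Step~1--2 together: producing the regularized, girth-preserving, \emph{class-one} overgraph $G^\star$ with a clean decomposition into copies of $G$---all the cycle-length bookkeeping lives there, whereas the lift in Steps~3--4 is essentially a mechanical verification once the coloured scaffold is in hand.
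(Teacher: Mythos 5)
Your plan has a genuine gap, and it is exactly where you yourself point: Steps~1--2 are not a routine preprocessing step, they are essentially the lemma itself. You propose to first ``regularize'' $G$ into a $d$-regular, girth-preserving, class-one graph $G^\star$ whose vertex set already partitions into copies of $G$, and only then run a voltage-type lift over the Exoo--Jajcay graph. But a $d$-regular graph of girth $g$ partitioned into induced copies of $G$ is precisely what the lemma asks you to construct (the lift would only be needed afterwards to get the copy-transitivity), and your suggested means of getting it --- gluing copies of $G$ along a ``high-girth bipartite filler'' on the deficient vertices, or realizing $G^\star$ as a ``spanning subgraph of $H$ restricted appropriately'' --- is not substantiated and the latter does not make sense, since an arbitrary $G$ need not embed in $H$ at all. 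The missing idea in the paper's proof is to apply Theorem~\ref{thm-exja} not with degree $d$ but with degree $k:=\sum_{v}(d-\deg_G(v))$, the total degree deficiency: the $k$ colours of the proper edge-colouring of the $k$-regular, $c$-transitive graph $H$ index the $k$ missing half-edges of $G$ (formally, the pendant edges $e'_1,\dotsc,e'_k$ of an auxiliary graph $G'$), one takes one copy of $G$ per vertex of $H$, and each edge of $H$ of colour $i$ joins, in the two corresponding copies, the vertex at which the $i$-th half-edge is missing. A proper $k$-edge-colouring of a $k$-regular graph uses each colour exactly once at each vertex, so every deficient slot of every copy is filled exactly once, giving $d$-regularity; any cycle not inside a single copy projects onto a cycle of $H$ and hence has length at least $g$; and $c$-transitivity of $H$ directly yields the automorphisms moving a copy of $G$ to any other copy. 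No edge-colouring of $G$ (or of any regularized overgraph), no class-one issue, and no lift are needed.

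Even granting your Steps~1--2, Step~4 as written does not work: in the cover $(u,x)\sim(v,y)$ iff $uv\in E(G^\star)$, $c(uv)=i$, $y=xs_i$, a ``colour-flat section'' built from a spanning forest $T$ carries only the lifts of the edges of $T$; an edge of $G^\star\setminus T$ appears inside the section only when the voltage around its fundamental cycle is the identity, which generically fails, so the sections are not induced copies of $G^\star$ (nor of $G$), and the claimed partition into copies collapses. So both the regularization and the copy-extraction steps of your outline would need to be replaced; the deficiency-indexed construction above does both at once.
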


\begin{proof}
Set~$k \coloneqq \sum_{v\in G} (d-\deg(v))$. Let~$G'$ be the supergraph of~$G$ obtained by
    adding~$k$ vertices~$v'_1,\dotsc,v'_k$ each of degree~$1$, such that all
    other vertices have degree~$d$. We let~$e'_i$ be the edge of~$G'$ incident to~$v'_i$,
    for each~$i\in\{1,\dotsc,k\}$. By Theorem~\ref{thm-exja}, there exists a
~$k$-regular graph~$H$ of girth at least~$g$ together with a proper edge-colouring~$c$
    using~$k$ colours, such that~$H$ is~$c$-transitive.  Let~$n(H)$ be the number of
    vertices of~$H$ and write~$V(H)=\{1,\dotsc,n(H)\}$.

    We construct~$\varphi(G)$ by starting from the disjoint union of~$n(H)$
    copies~$G_1,\dotsc,G_{n(H)}$ of~$G$.  For each edge~$e=\{i,j\}\in E(H)$, letting~$u_e$
    be the vertex of~$G$ incident to the edge~$e'_{c(e)}$ in~$G'$, we add an edge between
    the copy of~$u_e$ in~$G_i$ and that in~$G_j$.

Any cycle in~$\varphi(G)$ either is a cycle in~$G$, and hence has length at least~$g$, or
    contains all the edges of a cycle in~$H$, and hence has length at least~$g$. It follows
    that~$\varphi(G)$ has girth~$g$.

    The last statement follows directly from the fact that~$H$ is~$c$-transitive.
\end{proof}

\begin{cor}
    Let~$d$ and~$g$ be integers greater than two. If there exists a constant~$B=B(d,g)$
    such that every~$d$-regular graph~$H$ with girth~$g$ has independence ratio
    at least~$B$, then every graph~$G$ with maximum degree~$d$ and
    girth~$g$ also has independence ratio at least~$B$.
In particular, if \Cref{ratio-vertex} or \Cref{ratio-edge} can be applied to
    the class of~$d$-regular graphs of girth~$g$, then the conclusion also holds
    for the class of graphs with maximum degree~$d$ and
    girth~$g$, that is, for~$\rho(d,g)$.
\end{cor}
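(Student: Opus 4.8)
The plan is to reduce to the regular case \emph{via} \Cref{regular} and then extract a large independent set of $G$ by a pigeonhole argument over the copies.

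Concretely, I would take an arbitrary graph $G$ with maximum degree $d$ and girth $g$, and apply \Cref{regular} to obtain a $d$-regular graph $\varphi(G)$ of girth $g$ whose vertex set decomposes as $V(\varphi(G)) = V_1 \sqcup \dots \sqcup V_m$, where $\varphi(G)[V_i] \cong G$ for each $i \in \{1,\dots,m\}$; in particular $|V(\varphi(G))| = m\,|V(G)|$. By hypothesis applied to the $d$-regular, girth-$g$ graph $\varphi(G)$, there is an independent set $S \subseteq V(\varphi(G))$ with $|S| \ge B\,|V(\varphi(G))| = B\,m\,|V(G)|$. Since $S$ is the disjoint union of the sets $S \cap V_i$ for $i \in \{1,\dots,m\}$, some index $i$ satisfies $|S \cap V_i| \ge B\,|V(G)|$; and $S \cap V_i$ is an independent set of $\varphi(G)[V_i] \cong G$. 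Hence $\alpha(G) \ge B\,|V(G)|$, i.e.\ $G$ has independence ratio at least $B$, which is the first assertion.

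For the second assertion, suppose \Cref{ratio-vertex} (resp.\ \Cref{ratio-edge}) applies to the class of $d$-regular graphs of girth $g$: there are a fixed integer $r$ and fixed reals $\alpha_0,\dots,\alpha_r$ (meeting the relevant positivity hypothesis) so that every $d$-regular graph of girth $g$ carries a distribution on its maximal independent sets satisfying the stated inequality at every vertex (resp.\ every edge). The conclusion of the lemma then bounds $\frac{n}{\alpha(G)}$ from above by the constant $C \coloneqq \alpha_0 + \sum_{i=1}^{r}\alpha_i d(d-1)^{i-1}$ (resp.\ $C \coloneqq \sum_{i=0}^{r} 2\alpha_i(d-1)^i$) for every $d$-regular graph $G$ of girth $g$ on $n$ vertices, so every such graph has independence ratio at least $B \coloneqq 1/C$. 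The first assertion transfers this to every graph of maximum degree $d$ and girth $g$, which is exactly $\rho(d,g) \le C$.

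I do not expect a genuine obstacle: all the work sits in \Cref{regular}, and what remains is the elementary observation that an independent set of $\varphi(G)$ restricts to an independent set of each copy of $G$, followed by averaging. The only points deserving care are that the girth is preserved in the right direction --- every cycle of $\varphi(G)$ either lies inside a single copy of $G$ or traverses a cycle of the auxiliary regular graph, so $\girth(\varphi(G)) \ge g$ even when $\girth(G) > g$ --- and that ``applies to the class'' should be read uniformly, with one common tuple $(r,\alpha_0,\dots,\alpha_r)$, so that the resulting $B$ is a single constant valid for all graphs in the class and can feed into the first assertion. Note that the vertex-transitivity clause of \Cref{regular} is not used here; it becomes relevant only for phrasing conclusions about the fractional chromatic number of vertex-transitive graphs, where $\rho = \chi_f = n/\alpha$.
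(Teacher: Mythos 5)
Your proposal is correct and matches the paper's own argument: apply \Cref{regular} to embed $G$ as the parts of a partition of a $d$-regular girth-$g$ graph $\varphi(G)$, take a large independent set of $\varphi(G)$ guaranteed by the hypothesis, and use the pigeonhole principle on the copies of $G$ to extract an independent set of size at least $B\,|V(G)|$ in $G$. The second assertion is handled exactly as in the paper, by reading the conclusions of \Cref{ratio-vertex} or \Cref{ratio-edge} as a uniform lower bound on the independence ratio of $d$-regular girth-$g$ graphs and then invoking the first assertion.
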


\begin{proof}
    Let~$G$ be a graph with maximum degree~$d$ and girth~$g$ on~$n$ vertices. Let~$\varphi(G)$
    be the graph provided by \Cref{regular}. 
    In particular,~$\abs{V(\varphi(G))}=kn$ where~$k$ is the number of induced copies of~$G$ partitioning~$V(\varphi(G))$.
    By assumptions,~$\varphi(G)$ contains an independent set~$I$ of order at least~$B\cdot kn$. Letting~$I_i$ be the
    set of vertices of the~$i$-th copy of~$G$ contained in~$I$, by the pigeon-hole principle there exists~$i\in\{1,\dotsc,k\}$
    such that~$\abs{I_i}\ge B\cdot n$, and hence~$G$ has independence ratio at least~$B$.
\end{proof}

\section{Fractional colourings} 

\subsection{A local version of Reed's bound}\label{sub-reed} 
For the sake of illustration, we begin by showing how \Cref{algo} can be used
to prove \Cref{thm:Reedsbound}.  We actually establish a slight strengthening of
\Cref{thm-McDiarmid}, the local form of \Cref{thm:Reedsbound}.  The argument
relies on the relation~\eqref{reedinequality} below~\cite[Lemma~2.11]{Kin09},
which is a local version of the relation~(21.10) appearing in Molloy and Reed's
book~\cite{MoRe02}.  The short argument, however, stays the same and we provide
it here only for explanatory purposes, since it is the inspiration for the
argument used in the proof of \Cref{thm:girth7}.

\begin{prop}\label{prop-McDiarmidlike}
Let~$G$ be a graph, and set~$f_G(v) \coloneqq
    \frac{\omega_G(v)+\deg_G(v)+1}{2}$ for every~$v\in V(G)$,
    where~$\omega_G(v)$ is the order of a largest clique in~$G$ containing~$v$.
    Then~$G$ admits a fractional colouring~$c$ such that the restriction of~$c$
    to any induced subgraph~$H$ of~$G$ has weight at most~$\max_{v\in V(H)}\limits f_G(v)$.
    In particular,
    \[ \chi_f(G) \le \max\sst{f_G(v)}{v\in V(G)}. \]
\end{prop}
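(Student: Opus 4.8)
The plan is to obtain \Cref{prop-McDiarmidlike} as a direct application of the greedy fractional colouring algorithm of \Cref{algo}, run with the uniform distribution over maximum independent sets. The only freedom in \Cref{algo} is the choice of the parameters $\alpha_v,\beta_v$ and of the distributions; I would fix $\beta_v\coloneqq\tfrac12$ and $\alpha_v\coloneqq\tfrac{\omega_G(v)+1}{2}$ for every $v\in V(G)$, and for each induced subgraph $H$ let $\bI_H$ be a uniformly random maximum independent set of $H$ (which is in particular maximal, as fits the framework of \Cref{algo}). With this choice one has
\[ \alpha_v+\beta_v\deg_G(v)=\frac{\omega_G(v)+\deg_G(v)+1}{2}=f_G(v) \]
for every $v$, so that \Cref{algo} will deliver precisely a fractional colouring whose restriction to any induced subgraph $H$ has weight at most $\max_{v\in V(H)}f_G(v)$, and in particular $\chi_f(G)\le\max_{v\in V(G)}f_G(v)$.

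It then remains to verify the hypothesis of \Cref{algo}: for every induced subgraph $H$ and every $v\in V(H)$,
\[ \frac{\omega_G(v)+1}{2}\,\pr{v\in\bI_H}+\frac12\,\esp{\abs{N(v)\cap\bI_H}}\ge1. \]
Since $H$ is induced in $G$ we have $\omega_G(v)\ge\omega_H(v)$, so it is enough to prove the self-contained statement that, for every graph $H$ and every vertex $v$, a uniformly random maximum independent set $\bI$ of $H$ satisfies
\begin{equation}\label{reedinequality}
\pth{\omega_H(v)+1}\pr{v\in\bI}+\esp{\abs{N(v)\cap\bI}}\ge2.
\end{equation}
Inequality~\eqref{reedinequality} is the local version, established by King~\cite[Lemma~2.11]{Kin09}, of relation~(21.10) in Molloy and Reed~\cite{MoRe02}; I would include its short proof for completeness, since it is the template for the more elaborate arguments used for \Cref{thm:girth7}.

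Accordingly the proof has two clearly separated parts: a routine setup part --- recording the values of $\alpha_v,\beta_v$, checking the identity $\alpha_v+\beta_v\deg_G(v)=f_G(v)$, and invoking \Cref{algo} --- and the combinatorial heart, which is~\eqref{reedinequality}. The latter is where the only genuine difficulty lies. To prove it I would fix a clique $K$ of $H$ of order $\omega_H(v)$ containing $v$, partition $\cIM(H)$ according to which vertex of $K$ (if any) a maximum independent set meets, and run an exchange argument: a maximum independent set $I$ disjoint from $K$ with a single neighbour $w$ of $v$ can be traded for the maximum independent set $\pth{I\setminus\{w\}}\cup\{v\}$, which meets $K$ at $v$. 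Combining this correspondence with the two elementary observations --- that $v\in I$ contributes $\omega_H(v)+1$ to the left-hand side of~\eqref{reedinequality}, and that $v\notin I$ forces $\abs{N(v)\cap I}\ge1$ by maximality --- should yield~\eqref{reedinequality}. The subtle point, and the step I expect to require the most care, is the precise multiplicity bookkeeping in this exchange, notably for maximum independent sets that contain two or more neighbours of $v$; this is exactly the slack that makes $C_5$ an extremal example for the bound.
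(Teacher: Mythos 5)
Your overall route is the same as the paper's: apply \Cref{algo} with $\alpha_v=\frac{\omega_G(v)+1}{2}$, $\beta_v=\frac12$ and the uniform distribution over maximum independent sets, so that everything reduces to the local inequality $\pth{\omega_H(v)+1}\pr{v\in\bI}+\esp{\abs{N(v)\cap\bI}}\ge2$ for a uniform maximum independent set $\bI$ of the induced subgraph $H$ (using $\omega_G(v)\ge\omega_H(v)$, as you note). Citing King's Lemma~2.11 for this inequality would indeed suffice, and the paper does exactly that, reproving it only for illustration. Where you diverge is in the proof you sketch for the inequality itself: the paper conditions on a realisation $J$ of $\bI\setminus N[v]$, uses the spatial Markov property to see that $\bI\cap W$ is uniform over the maximum independent sets of $H[W]$ with $W=N[v]\setminus N(J)$, and then distinguishes whether $W$ is a clique (necessarily of size at most $\omega_G(v)$, since $W\subseteq N[v]$) or not; you instead propose a global exchange argument on $\cIM(H)$.

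As written, your sketch has a genuine gap exactly at the point you flag: you never bound how many ``deficit'' sets (those with $v\notin I$ and $\abs{N(v)\cap I}=1$) get traded against a single maximum independent set containing $v$, and without that bound the argument does not conclude. The gap can be closed, and more simply than you anticipate. If $I'$ is a maximum independent set containing $v$, its preimages under your exchange are the sets $(I'\setminus\{v\})\cup\{w\}$ for those $w\in N(v)$ with no neighbour in $I'\setminus\{v\}$; any two such vertices $w_1,w_2$ must be adjacent, for otherwise $(I'\setminus\{v\})\cup\{w_1,w_2\}$ would be an independent set larger than $I'$. Hence these witnesses together with $v$ form a clique, so there are at most $\omega_H(v)-1$ of them, which is precisely the surplus $(\omega_H(v)+1)-2$ contributed by $I'$; sets with $\abs{N(v)\cap I}\ge2$ already contribute at least $2$ and need no exchange at all, so --- contrary to your closing remark --- no slack from them is required, even in the extremal case of $C_5$. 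Two smaller inaccuracies: a deficit set need not be disjoint from your fixed clique $K$ (its unique neighbour of $v$ may lie in $K$), and the partition of $\cIM(H)$ according to its intersection with $K$ plays no role in the completed argument; the only clique that matters is the one formed around $v$ by the exchange witnesses.
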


\begin{proof}
    We demonstrate the statement by applying \Cref{algo}. To this end, we use
    the uniform distribution on maximum independent sets, which corresponds to the
    hard-core distribution at fugacity~$\lambda=\infty$.
\begin{as}\label{reedinequality}
For every induced subgraph~$H$ of~$G$, let~$\bI_H$ be a maximum independent set of~$H$,
    drawn uniformly at random. Then for every vertex~$v\in V(H)$,
\[\frac{\omega(v)+1}{2} \pr{v\in \bI_H} + \frac{1}{2} \esp{\abs{N(v)\cap \bI_H}} \ge 1.\]
\end{as}
\noindent
The conclusion follows by applying \Cref{algo} with
~$\alpha_v=\frac{\omega(v)+1}{2}$ and~$\beta_v=\frac12$ for every~$v\in V(G)$.

\smallskip
    It remains to establish~\eqref{reedinequality}. We let~$J$ be any possible outcome
    of~$\bI_H \setminus N[v]$, and~$W = N[v]\setminus N(J)$. We condition on the random
    event~$E_J$ that~$\bI_H \setminus N[v]=J$, and the Spatial Markov Property of the
    uniform distribution over the maximum independent sets of~$H$ ensures that~$\bI_H \cap W$ is
    a uniform random maximum independent set of~$H[W]$. There are two cases.

\begin{enumerate}[label=(\roman*)]
    \item If~$W$ is a clique of size~$k \le \omega(v)$, then exactly one vertex from~$W$ belongs to~$\bI_H$, and every vertex in~$W$ has equal probability~$1/k$ to be in~$\bI_H$. So, in this case,
        \[ \frac{\omega(v)+1}{2} \pr{v\in \bI_H \mid E_J} + \frac{1}{2}
        \esp{\abs{N(v)\cap \bI_H} \mid E_J} = \frac{\omega(v)+1}{2k} +
        \frac{k-1}{2k}  \ge  1. \]
\item If~$W$ is not a clique, then~$\abs{W\setminus \{v\} \cap \bI_H} \ge 2$ and~$v\notin \bI_H$, since~$\bI_H$ is a maximum independent set.
        So, in this case,
        \[\frac{\omega(v)+1}{2} \pr{v\in \bI_H \mid E_J} + \frac{1}{2} \esp{\abs{N(v)\cap \bI_H} \mid E_J} \ge \frac{1}{2} \times 2 = 1.
        \]
\end{enumerate}
    The validity of~\eqref{reedinequality} follows by summing over all possible
realisations~$J$ of~$\bI_H \setminus N[v]$. 
\end{proof}

We finish by noting that the bound provided by \Cref{thm-McDiarmid} is best
possible over the class of unicyclic triangle-free graphs if one uses the
fractional greedy colouring of \Cref{algo} together with any probability
distribution on the \emph{maximum} independent sets of the graph.

\begin{lemma}\label{lem-best}
    If the probability distribution used in \Cref{algo} gives positive
    probability only to maximum independent sets, then the greedy fractional
    colouring algorithm can return a fractional colouring of weight up
    to~$\frac{d+3}{2}$ in general for graphs of degree~$d$, should they be
    acyclic when~$d$ is odd, or have a unique cycle (of length~$5$) when~$d$ is
    even.
\end{lemma}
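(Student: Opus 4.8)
The plan is to exhibit, for each odd degree $d$, a tree $T_d$ (and for each even $d$ a unicyclic graph with a single $5$-cycle) together with a probability distribution supported on maximum independent sets for which the greedy fractional algorithm of \Cref{algo} is forced to use weight $\tfrac{d+3}{2}$. The natural candidate, mimicking the tightness example for \Cref{prop-McDiarmidlike}, is a ``spider''-like construction: take a central vertex $v$, and attach to it $d$ pendant copies of an appropriate gadget so that in \emph{every} maximum independent set of $T_d$ one of the two local cases in the proof of \Cref{prop-McDiarmidlike} is met with equality. Concretely, for odd $d$ I would take $T_d$ to be the star $K_{1,d}$ subdivided once on each edge, or more precisely a tree in which the closed neighbourhood $N[v]$ of the root, after conditioning on the outside, always reduces to a single vertex (so $W=\{v\}$ is a clique of size $k=1$), forcing $\pr{v\in\bI}=1$; then the inequality in case (i) of \Cref{prop-McDiarmidlike} becomes $\tfrac{\omega(v)+1}{2}\cdot 1 + 0 = \tfrac{2}{2}\cdot\tfrac{?}{}$… the point is to arrange that the bound $\alpha_v+\beta_v\deg_G(v) = \tfrac{1+d+1}{2}=\tfrac{d+2}{2}$ is \emph{not} achieved but $\tfrac{d+3}{2}$ is; so one wants instead the clique $W$ to sometimes have size $2$ with $v$ chosen with probability exactly $1/2$, which makes the worst-case weight slip to $\tfrac{d+3}{2}$.

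First I would pin down the gadget. For $d$ odd, consider the tree obtained from the root $v$ by adding $d$ leaves $u_1,\dots,u_d$, and then to each $u_i$ adding a further leaf $w_i$. A maximum independent set of this tree necessarily contains all the $w_i$; it may or may not contain $v$, and if it does not contain $v$ then it may contain any subset of the $u_i$. Counting shows that, among maximum independent sets, $v$ is present in exactly half of them (when $d$ is odd the parity works out so that the number of maximum independent sets containing $v$ equals the number not containing $v$ — this is the step where oddness of $d$ enters). Hence with the uniform distribution on maximum independent sets, $\pr{v\in\bI}=\tfrac12$ and $\esp{|N(v)\cap\bI|}=0$ at the root, while for the leaves $u_i$ and $w_i$ the analogous quantities are easily computed. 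Plugging $\alpha_{u}$, $\beta_u$ as dictated by \Cref{algo} (i.e.\ $\alpha_v+\beta_v\deg v=\tfrac{d+3}{2}$ for the value we are claiming) one checks that the hypotheses of \Cref{algo} hold with these constants and that the first step of the algorithm at the root draws $\iota$ equal to $(1-0)/\tfrac12 = 2$ times a weight, so that the total weight returned is exactly $\tfrac{d+3}{2}$ rather than $\tfrac{d+2}{2}$. For $d$ even I would perform the same construction but replace the root by a vertex lying on a $5$-cycle, so that the parity obstruction that prevented the tree from working at even $d$ is resolved by the odd cycle (exactly the $C_5$ mechanism already invoked in the paper via $\chi_f(C_5)=5/2$); one attaches $d-2$ pendant gadgets to a vertex of $C_5$.

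The main obstacle, and the part requiring genuine care rather than routine computation, is the parity/counting bookkeeping: one must verify that in the proposed (uni)cyclic graph the uniform — or some carefully weighted — distribution on maximum independent sets really does realise the equality case of \Cref{prop-McDiarmidlike} \emph{simultaneously} at the critical vertex and is \emph{consistent} with the algorithm's greedy schedule, i.e.\ that no earlier deletion step in \Cref{alg-greedy} shortcuts the process before the weight reaches $\tfrac{d+3}{2}$. This amounts to checking that at the root the minimum in the definition of $\iota$ is always attained by the $(1-w[v])/\pr{v\in\bI}$ term and never by the $\bigl(\alpha_v+\beta_v\deg_G(v)\bigr)-w(\cI_{\max})$ term until the very end, which is where the choice of the pendant gadgets (forcing $\pr{u\in\bI}$ large at the leaves, hence their deletion terms non-binding) must be tuned. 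Once this scheduling fact is established the weight count is immediate, and the parenthetical claim about acyclicity for odd $d$ versus a single $5$-cycle for even $d$ follows from the parity computation sketched above. I expect the whole argument to be short, the only subtlety being the interplay between the parity of $d$ and the need for (or avoidance of) the $5$-cycle.
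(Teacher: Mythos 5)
There is a genuine gap, and it is the central claim of your construction. In your spider gadget (root $v$ with children $u_1,\dotsc,u_d$, each $u_i$ carrying a pendant leaf $w_i$) the maximum independent set is \emph{unique}: it is $\{v,w_1,\dotsc,w_d\}$ of size $d+1$, since any independent set avoiding $v$ picks at most one vertex from each path $u_iw_i$ and so has size at most $d$. Hence $\pr{v\in\bI}=1$, not $\tfrac12$, and the asserted parity phenomenon (``for odd $d$ the number of maximum independent sets containing $v$ equals the number not containing it'') is false. Worse, the gadget cannot reach the target weight at all: the first iteration of \Cref{alg-greedy} puts weight $1$ on the unique maximum independent set, which covers $v$ and all the $w_i$, and the leftover graph $\{u_1,\dotsc,u_d\}$ is edgeless, so one further unit of weight finishes the colouring --- total weight $2$, far below $\tfrac{d+3}{2}$. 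The proposal also leaves the decisive computations unfinished (the case-(i) display breaks off, the scheduling claim about which term attains the minimum in $\val$ is flagged but never verified), and the even case is only sketched.

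The missing idea is that $\tfrac{d+3}{2}$ is not produced by a single bad vertex but by an \emph{accumulation over roughly $d/2$ rounds}, which calls for an inductive construction rather than a one-level spider. The paper builds $G_d$ from $G_{d-2}$ by attaching \emph{two} pendant leaves to every vertex of $G_{d-2}$; then the new leaves form the unique maximum independent set of $G_d$, so any distribution supported on maximum independent sets forces the algorithm to spend weight exactly $1$ before it is left with $G_{d-2}$, all weights reset to $0$, and induction gives $1+\tfrac{(d-2)+3}{2}=\tfrac{d+3}{2}$. The parity of $d$ enters only through the base case: one starts from a single edge ($d=1$, weight $2$) when $d$ is odd and from $C_5$ ($d=2$, weight $\tfrac52$) when $d$ is even, which is exactly why the even-degree examples carry one $5$-cycle and the odd-degree ones are trees. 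Your closing remark about resolving the even case by planting the construction on a $C_5$ gestures in this direction, but without the leaf-doubling recursion and the uniqueness of the maximum independent set at each stage the argument does not go through.
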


\begin{proof}
We prove the statement by induction on the positive integer~$d$.
\begin{itemize}
\item If~$d=1$, then let~$G_1$ consist only of an edge. The algorithm returns a fractional colouring of~$G_1$ of weight~$2$.
\item If~$d=2$, then let~$G_2$ be the cycle of length~$5$. The algorithm returns a fractional colouring of~$G_2$ of weight~$\frac{5}{2}$.
\item If~$d>2$, then let~$G_d$ be obtained from~$G_{d-2}$ by adding two neighbours of
    degree~$1$ to every vertex. This creates no new cycles, so~$G_d$ is acyclic
        when~$d$ is odd, and contains a unique cycle, which is of length~$5$,
        when~$d$ is even.

For every~$d\ge3$, the graph~$G_d$ contains a unique maximum independent set,
        namely~$I_0 \coloneqq V(G_d)\setminus V(G_{d-2})$. After the first
        step of the algorithm applied to~$G_d$, all the vertices in~$I_0$ have
        weight~$1$, and we are left with the graph~$G_{d-2}$ where every vertex
        has weight~$0$. By the induction hypothesis, the total weight of the
        fractional colouring returned by the algorithm is therefore~$1 +
        \frac{(d-2)+3}{2} = \frac{d+3}{2}$.
        \qedhere
\end{itemize}
\end{proof}

\subsection{Triangle-free graphs} 
Using a similar approach, it is possible to obtain improved bounds for the
fractional chromatic number of a given triangle-free graph~$G$, as stated in Theorem~\ref{thm:triangle-free}.

\begin{proof}[Proof of Theorem~\ref{thm:triangle-free}]

Let~$\lambda>0$ be any positive real.
For every induced subgraph~$H$ of~$G$, we let~$\bI_H$ be a random independent set drawn according to the hard-core
distribution at fugacity~$\lambda$ over the set~$\cI(H)$ of all independent sets of~$H$. We first assert the following.

\begin{as}\label{triangle-free-occupancy} For every vertex~$v\in V(H)$, and every integer~$k \ge 1$,
\[\pth{1+\frac{(1+\lambda)^k}{\lambda(1+k\lambda)}} \pr{v\in \bI_H} + \frac{1+\lambda}{1+k\lambda}\esp{\abs{N(v)\cap \bI_H}} \ge 1.\]
\end{as}

\noindent
Note that when~$\omega=2$, we deduce~\eqref{reedinequality}
from~\eqref{triangle-free-occupancy} by taking~$k=2$ and
letting~$\lambda$ go to infinity.

The result follows from \eqref{triangle-free-occupancy} by applying \Cref{algo} with~$\alpha_v =  1+\frac{(1+\lambda)^k}{\lambda(1+k\lambda)}$ and~$\beta_v = \frac{1+\lambda}{1+k\lambda}$ for every~$v\in V(G)$. There remains to prove \eqref{triangle-free-occupancy}.

    We let~$J$ be any possible realisation of~$\bI_H \setminus N[v]$. By the
    Spatial Markov Property of the hard-core distribution, if we condition on
    the event~$E_J$ that~$\bI_H \setminus N[v] = J$ and write~$W \coloneqq
    N[v]\setminus N(J)$, then~$\bI_H \cap N[v]$ follows the hard-core
    distribution at fugacity~$\lambda$ over~$\cI(H[W])$. Since~$G$ (and
    therefore also~$H$) is triangle-free,~$H[W]$ is a star~$K_{1,d}$, for
    some integer~$d\in\{0,\dotsc,\Delta(G)\}$. An analysis of the hard-core
    distribution over the independent sets of a star yields that
    \begin{enumerate}[label=(\roman*)]
\item~$\pr{v\in \bI_H \mid E_J} = \dfrac{\lambda}{\lambda+(1+\lambda)^d}$, and
\item~$\esp{\abs{N(v)\cap \bI_H}\mid E_J} = \dfrac{d\lambda(1+\lambda)^{d-1}}{\lambda+(1+\lambda)^d}$,\label{ite-itr}
    \end{enumerate}
    where~\ref{ite-itr} uses that~$\sum_{i=0}^d i\binom{d}{i}\lambda^i = d\lambda{(1+\lambda)}^{d-1}$.

For some positive real numbers~$\alpha$ and~$\beta$, we let 
\[g(x) \coloneqq  \alpha \dfrac{\lambda}{\lambda+(1+\lambda)^x} + \beta \dfrac{x\lambda(1+\lambda)^{x-1}}{\lambda+(1+\lambda)^x}.\]
We observe that~$g$ is a convex function, and therefore its minimum over the
    (non-negative) reals is reached at its unique critical point~$x^*$ such
    that~$g'(x^*)=0$ (if it exists). Moreover, if there are numbers~$y$ and~$z>y$ such that
~$g(y)=g(z)$, then Rolle's theorem ensures that~$x^* \in (y,z)$, and~$g(x) \ge g(y)$ for every~$x\notin(y,z)$.

Let~$k$ be a positive integer. We now fix
\[ \alpha \coloneqq 1+\frac{(1+\lambda)^k}{\lambda(1+k\lambda)} \quad \et \quad \beta = \frac{1+\lambda}{1+k\lambda}.\]
One can easily check that with these values for~$\alpha$ and~$\beta$, it holds that 
\[ g(k-1)=1=g(k).\]
We conclude that~$g(d) \ge 1$ for every non-negative integer~$d$, which means that
\begin{equation}\label{eq:triangle-free}
\pth{1+\frac{(1+\lambda)^k}{\lambda(1+k\lambda)}} \pr{v\in \bI_H \mid E_J} + \frac{1+\lambda}{1+k\lambda}\esp{\abs{N(v)\cap \bI_H} \mid E_J} \ge 1,
\end{equation}
for any possible realisation~$J$ of~$\bI_H\setminus N[v]$. The conclusion
follows again by taking the convex combination of~\eqref{eq:triangle-free} over all
possible values of~$J$. 
\end{proof}

\subsection{A stronger bound for graphs of girth 7} 

\begin{figure}[t] 
\centering
\begin{tikzpicture}[x=.5cm]

\node[gertex,label=left:\(v\)] (v) {};
\node[wertex,right=1.5cm of v,label=above:\(c\)] (c) {};
\node[wertex,above of=c,label=above:\(b\)] (b) {};
\node[gertex,above of=b,label=above:\(a\)] (a) {};
\node[gertex,below of=c,label=above:\(d\)] (d) {};
\node[gertex,below of=d,label=above right:\(e\)] (e) {};
\node[gertex,left=1cm of e,label=right:\(f\)] (f) {};
\path (a)++(2.8,0) node[wertex,label=above:\(a_1\)] (aa) {};
\path (b)++(2.8,1.5) node[wertex,label=above:\(b_1\)] (bb) {};
\path (b)++(2.8,0) node[vertex,label=above:\(b_2\)] (bbb) {};
\path (b)++(2.8,-1.5) node[vertex,label=above:\(b_3\)] (bbbb) {};
\path (c)++(2.8,0) node[vertex,label=above:\(c_1\)] (cc) {};
\path (c)++(2.8,-1.5) node[vertex,label=above:\(c_2\)] (ccc) {};
\path (d)++(2.8,0) node[gertex,label=above:\(d_1\)] (dd) {};
\path (d)++(2.8,-1.5) node[gertex,label=above:\(d_2\)] (ddd) {};
\path (e)++(2.8,0) node[gertex,label=above:\(e_1\)] (ee) {};
\draw[thick] (bb)--(b)--(v)--(a)--(aa);
\draw[thick] (cc)--(c)--(v)--(d);
\draw[thick] (f)--(v)--(e)--(ee);
\draw[thick] (bbb)--(b)--(bbbb);
\draw[thick] (c)--(ccc);
\draw[thick] (dd)--(d)--(ddd);
\node[wertex,xshift=11cm,yshift=-2cm] (v1) {};
\node[wertex,above= of v1] (v2) {};
\path (v1)--(v2) node[wertex,midway] (vv) {};
\draw[thick] (vv)--(v2);
\node[color=blue!80!white, below = of v1] {\large $N(J)\setminus N^2[v]$};
\node[color=blue!80!white, below = 1.8 of v1] {\large $=V(H)\setminus(N^2[v]\cup U\cup J)$};
\node[vertex,right= of aa] (v3) {};
\node[vertex,below= of v3] (v5) {};
\node[vertex,right= of v3] (v6) {};
\draw[thick] (vv)--(v5)--(v1);
\draw[thick] (v6)--(v2);
\draw[thick] (v3)--(aa);
\draw[thick] (v5)--(bb);
\node[right = of v6, color=green!50!black] {\Large $J$};
\node[wertex,right= of cc] (j1) {};
\node[wertex,below= 1 of j1] (j2) {};
\node[wertex,below= 1 of j2] (j3) {};
\draw[thick] (j2)--(j3);
\node[wertex,below= 1 of j3] (j4) {};
\node[wertex,below= 1 of j4] (j5) {};
\path (j1.north)++(.1,.35) node (t){};
\draw[dashed] (v5.south)++(-.25,-.5)--(t);
\draw[thick] (j1)--(bbb);
\draw[thick] (j2)--(bbbb);
\draw[thick] (j3)--(cc);
\draw[thick] (j4)--(cc);
\draw[thick] (j5)--(cc);
\begin{pgfonlayer}{background}
\draw[edge,color=blue!80!white] (v1) -- (v2);
\begin{scope}[transparency group,opacity=.5]
    \draw[edge,opacity=1,color=green!50!black] (v3)++(0,.003) -- (v5) -- (v6) -- (v3)++(0,.003);
\fill[edge,opacity=1,color=green!50!black] (v3.north) -- (v5.center) -- (v6.center) -- (v3.north);
\end{scope}
\draw[edge,color=purple] (j1) -- (j5);
\end{pgfonlayer}

\node[right = of j3,color=purple] {\large $U$};
    \node[below left = 6 and .25 of v] (s){};
\draw [decorate,decoration={brace,amplitude=10pt,mirror,raise=4pt}]
    (s) --++ (8.25,0) node [black,midway,yshift=-1cm] {\large $N^2[v]$};


\end{tikzpicture}
\caption{A schematic view of the situation for the proof of~(C). No vertex in~$J$
has a neighbour in~$U$. 
Each vertex in~$U$ has a unique neighbour in~$N^2(v)$, which belongs to~$\bI_H$ with probability~$1$ since~$U$ must be covered. 
All vertices that belong to~$\bI_H$ with probability~$1$ are coloured black, and every neighbour of a black vertex belongs to~$\bI_H$ with probability~$0$; such vertices are coloured white. The white vertices in~$N^2[v]$ are either contained in~$N(J)$, or in~$P^2_{N^2[v]}(U)=\{b,c\}$.
The grey vertices are those whose presence in~$\bI_H$ is not determined; together they form the set~$W=\{v\}\cup W_1\cup W_2$,
with~$W_1=\{a,d,e,f\}$ and~$W_2=\{d_1,d_2,e_1\}$.
    We have~$W_{1,0}=\{a,f\}$,~$W_{1,1}=\{e\}$,~$W_{1,2}=\{d\}$ and~$W_{1,j}=\varnothing$ if~$j\ge3$. In particular~$x_{1,0}=2$ and~$x_{1,j}=1$ if~$j\in\{1,2\}$. Note that~$X_0=\{v,a,f\}$ while~$X_1=\cup_{j\ge1}W_{1,j}=\{d,e\}$. Note also that~$H[X_0]$ is a star centered at~$v$, and all connected components of~$H[X_1\cup W_2]$ are stars centered in~$X_1$.}\label{fig-C}
\end{figure}

Let~$G$ be a graph of girth (at least)~$7$ and~$H$ an induced subgraph of~$G$.
We wish to apply \Cref{algo} with an independent set~$\bI_H$ drawn according to the
hard-core distribution at fugacity~$\lambda$ over the set~$\cIm(H)$ of all
\emph{maximal} independent sets of~$H$, for the specific value~$\lambda=4$.
We begin by establishing the following assertion.

\begin{as}\label{as-A}
For every given vertex~$v\in V(H)$ and every integer~$k\ge 4$, 
\[\frac{2^{k-3}+k}{k}\pr{v\in \bI_H} + \frac{2}{k}\esp{\abs{N(v)\cap \bI_H}} \geq 1.\]
\end{as}

\begin{proof}
    Figure~\ref{fig-C} provides a schematic illustration of the following.
Let~$J$ be any possible realisation of~$\bI_H \setminus N^2[v]$. We are going to condition on the random
    event~$E_J$ that~$\bI_H \setminus N^2[v]=J$.  Let~$U \coloneqq (V(H)\setminus N^2[v])\setminus N[J]$ be
    the set of vertices at distance more than~$2$ from~$v$ that are uncovered (by~$J$), and set~$W'\coloneqq
    N^2[v]\setminus(N(J)\cup P^2_{N^2[v]}(U))$ (in that scenario we have $P^2_{N^2[v]}(U)=N^2(U)\cap N(v)$). It follows from the definitions that~$v$ belongs to~$W'$.
    Because the girth of~$G$ is greater than~$6$, no vertex outside of~$N^2[v]$ has more than one neighbour
    in~$N^2[v]$. So \Cref{lem:markov-max} ensures that~$\bI_H \cap W'$ follows the hard-core distribution at
    fugacity~$\lambda$ over the maximal independent sets of~$H[W']$. As the hard-core distribution behaves
    independently on each connected component, it suffices to work with the connected component of~$H[W']$
    that contains~$v$; letting~$W$ be the vertices in this connected component, we simply ignore the vertices in~$W'\setminus W$.
    (Actually, the vertices in~$W'\setminus W$ are isolated in~$H[W']$ and hence belongs to~$\mathbf{I}_H$ with probability~$1$.)

    We let~$W_i$ be the set of vertices in~$W$ at distance~$i$ from~$v$ in~$H[W]$, for~$i\in \{0,1,2\}$,
    and~$W_{1,j}$ be the subset of vertices of~$W_1$ with~$j$ neighbours in~$W_2$. We set~$x_j \coloneqq
    \abs{W_{1,j}}$. Finally, we let~$X_0 \coloneqq \{v\}\cup W_{1,0}$ and~$X_1 \coloneqq W_1 \setminus W_{1,0}$,
    and for every~$i\in \{0,1\}$ we let~$\bI_i$ be the random outcome of~$\bI_H \cap X_i$ under the
    condition~$E_J$.

\medskip
    We first assume that~$x_0\ge 1$. If~$v\notin \bI_0$, then the maximality of~$\bI_H$ ensures that~$\bI_H
    \setminus (X_1\cup W_2)$ equals~$J\cup W_{1,0}$, which we call~$J_0$. Since~$v$ is covered by~$J_0$, \Cref{lem:markov-max} implies
    that, under the additional condition that $v\notin \bI_0$, the random set~$\bI_1$ follows the hard-core
    distribution at fugacity~${\lambda=4}$ over the maximal independent sets of~$H[X_1\cup W_2]$. Moreover,~$\bI_1$
    behaves independently on each connected component of~$H[X_1\cup W_2]$, which are all stars with center in~$X_1$. So for every
    integer~$j\in\{1,\dotsc,d-1\}$ and every vertex~$u\in W_{1,j}$, we have
\[
\pr{u \in \bI_1 \mid v\notin \bI_0} = \frac{1}{1+\lambda^{j-1}}.
\]
It follows that
    \[
\pr{\bI_1 =\varnothing \mid v\notin \bI_0} =\prod_{j=1}^{d-1} \pth{\frac{\lambda^{j-1}}{1+\lambda^{j-1}}}^{x_j},
    \]
and we define~$\zeta$ to be this value.
Let~$p\coloneqq \pr{v\in \bI_0}$;
we have
\begin{align*}
\pr{\bI_1 = \varnothing} &= \pr{\bI_1 = \varnothing \mid v\in \bI_0} \pr{v\in \bI_0} + \pr{\bI_1 = \varnothing \mid v\notin \bI_0} \pr{v\notin \bI_0}
= p +(1-p)\zeta.
\end{align*}
If $\bI_1=\varnothing$, then the maximality of~$\bI_H$ ensures that~$\bI_H \setminus X_0$ equals~$J \cup W_2$, which we call~$J_1$.
    Since every vertex of~$X_1$ is covered by~$J_1$, \Cref{lem:markov-max} implies that, under the
    additional condition that~$\bI_1=\varnothing$, the random set~$\bI_0$ follows the hard-core distribution
    at fugacity~$\lambda$ over the maximal independent sets of~$H[X_0]$. Hence, since~$v$ cannot belong to~$\mathbf{I}_H$
    as soon as~$\mathbf{I}_1\neq\varnothing$,
\begin{align*}
p = \pr{v\in \bI_0} &= \pr{v\in \bI_0 \mid \bI_1=\varnothing}\pr{\bI_1=\varnothing}
= \frac{1}{1+\lambda^{x_0-1}}\Big(p + (1-p)\zeta\Big),\\
    \intertext{so}
p \pth{1+\lambda^{x_0-1}} &= \zeta+p(1-\zeta), \\
    \intertext{and hence}
p &= \frac{\zeta}{\zeta+\lambda^{x_0-1}}.
\end{align*}

There remains to evaluate the expectancy of~$\abs{\bI_H \cap N(v)}$, which is
\begin{align*}
\esp{\abs{N(v)\cap \bI_H} \mid E_J} &= \pr{v\notin \bI_0}\Big(x_0 + \esp{\abs{\bI_1} \mid v\notin \bI_0}\Big)
= \frac{\lambda^{x_0-1}}{\zeta+\lambda^{x_0-1}} \pth{x_0 + \sum_{j=1}^{d-1} \frac{x_j}{1+\lambda^{j-1}}}.
\end{align*}

\medskip

We now assume that~$x_0=0$. 
The probability distribution of~$\bI_H \cap W$ is obtained from that above by forbidding the outcomes that correspond to a
non-maximal independent set when~$x_0=0$.  There is only one such outcome, which corresponds to the event~$v\notin \bI_0$
and~$\bI_1 = \varnothing$ (because~$v$ is no longer covered by~$W_{1,0}$ in that case). This means that $\bI_H \cap N^2[v]=W_2$, hence that outcome has zero contribution to both $\pr{v\in \bI_H \mid E_J}$ and $\esp{|N(v)\cap \bI_H| \mid E_J}$; forbidding that outcome therefore increases those quantities. It follows that the previously computed values are lower bounds in the case $x_0=0$.
%

\medskip

We conclude that regardless of the value of $x_0$, we have
\begin{align}
\pr{v\in \bI_H \mid E_J} &\ge \dfrac{\zeta}{\zeta + \lambda^{x_0-1}}, \quad \text{and}\label{eq:occupancy1}\\
\esp{\abs{N(v)\cap \bI_H} \mid E_J} &\ge \dfrac{\lambda^{x_0-1}}{\zeta + \lambda^{x_0-1}}\left(x_0 + \sum_{j=1}^{d-1} \frac{x_j}{1+\lambda^{j-1}} \right).\label{eq:occupancy2}
\end{align}

%
%
%

\noindent
There remains to check that, when~$\lambda=4$, it holds that
\begin{equation}\label{eq:occupancy}
\frac{2^{k-3}+k}{k} \pr{v\in \bI_H \mid E_J} + \frac{2}{k}\esp{\abs{N(v)\cap \bI_H} \mid E_J} \geq 1.
\end{equation}
By combining~\eqref{eq:occupancy1},~\eqref{eq:occupancy2}, and~\eqref{eq:occupancy}, it is enough to prove that
\[\pth{2^{k-3}+k}\zeta  + 2\lambda^{x_0-1}\pth{x_0 + \sum_{j=1}^{d-1} \frac{x_j}{1+\lambda^{j-1}}} \geq k(\zeta + \lambda^{x_0-1}), \]
that is, multiplying both sides by the positive real~$\lambda^{1-x_0}/\zeta$,
\begin{equation}\label{eq-calculus}
    \pth{2^{k-3}+k}\lambda^{1-x_0}  + \frac2\zeta\pth{x_0 + \sum_{j=1}^{d-1} \frac{x_j}{1+\lambda^{j-1}}} \geq k\lambda^{1-x_0} + \frac{k}{\zeta}.
\end{equation}
Let us set 
%
\[ R\coloneqq {\zeta}^{-1}\cdot\left(k - 2x_0 - 2\sum_{j=1}^{d-1} \frac{x_j}{1+\lambda^{j-1}} \right) = \prod_{j=1}^{d-1} \left( 1 + \frac{1}{\lambda^{j-1}}\right)^{x_j}\left(k - 2x_0 - 2\sum_{j=1}^{d-1} \frac{x_j}{1+\lambda^{j-1}} \right),
\]
so that~\eqref{eq-calculus} is equivalent to 
\begin{equation}
\label{eq:R}
2^{k-3}\lambda^{1-x_0} \geq R.
\end{equation}

\noindent
Observe that the left side of~\eqref{eq:R} is positive, and recall that, by definition, each value~$x_j$ is a non-negative integer. We may therefore assume that~$x_0<\frac{k}{2}$, since otherwise~$R\le 0$, which directly implies~\eqref{eq:R}. Likewise,~$R\le 0$
if~$x_1 \ge k-2x_0$, hence we may moreover assume that~$0\le x_1 \le k-2x_0-1$.

Let us now fix~$\lambda=4$ and prove $\eqref{eq:R}$ in that case, that is~$R\le 2^{k-2x_0-1}$ for every non-negative integer~$x_0$. Since the inequality is verified if~$R\le 0$, we
assume from now on that~$R$ is positive.
We define~$y_j \coloneqq \dfrac{x_j}{1+\lambda^{j-1}}$ for every~$j\in\{1,\dotsc,d-1\}$, and~$y \coloneqq \sum_{j=2}^{d-1}\limits y_j$.
If~$y>0$ then we set $j_0 \coloneqq \min \{j\ge 2, x_j\neq 0\}$, and otherwise we set~$j_0\coloneqq\infty$. In particular,~$y\ge \frac{1}{1+\lambda^{j_0}}$.
We have

\begin{align*}
R &= 2^{x_1} \cdot \prod_{j=2}^{d-1} \left( 1 + \frac{1}{\lambda^{j-1}}\right)^{x_j}\left(k-2x_0-x_1 - 2\sum_{j=2}^{d-1} \frac{x_j}{1+\lambda^{j-1}} \right) \\
&= 2^{x_1} \cdot \prod_{j=j_0}^{d-1} \left( 1 + \frac{1}{\lambda^{j-1}}\right)^{(1+\lambda^{j-1})y_j}\left(k-2x_0-x_1 - 2\sum_{j=j_0}^{d-1} y_j\right)\\
&\le 2^{x_1} \cdot \prod_{j=j_0}^{d-1}  \left( 1 + \frac{1}{\lambda^{j_0-1}}\right)^{(1+\lambda^{j_0-1})y_j}\left(k-2x_0-x_1 - 2\sum_{j=j_0}^{d-1} y_j\right)\\
&= 2^{x_1}\left( 1 + \frac{1}{\lambda^{j_0-1}}\right)^{(1+\lambda^{j_0-1})y} (k-2x_0-x_1 - 2y),
\end{align*}
where the inequality uses that~$\displaystyle x\mapsto{\left(1+\frac1{\lambda^{x}}\right)}^{(1+\lambda^x)}$ is decreasing (to~$1$) as~$x$ increases in~$\mathbb{R}_{>0}$.

\noindent
We let~$A\coloneqq \left( 1 + \frac{1}{\lambda^{j_0-1}}\right)^{1+\lambda^{j_0-1}}$,~$B \coloneqq k-2x_0-x_1$, and~$f\colon x \mapsto A^x(B-2x)$, so that~$R \le 2^{x_1} f(y)$.
Since $j_0 \ge 2$, we have $A \le \frac{3125}{1024}$, and by the assumptions~$B$ is a positive integer. 

We are going to use the following fact in order to systematically obtain an upper bound on~$f(y)$.
\begin{description}
\item[Fact 1] For all real numbers~$y_0$,~$A$ and~$B$ with~$A>1$ and~$B>0$,
    the maximum of the function~$f\colon x \mapsto A^x(B-2x)$ on~$\mathbb{R}$
        is~$\frac{2A^{B/2}}{e\ln A}$, and if~$B/2-1/\ln A \le y_0$ then
        the maximum on the domain~$[y_0,+\infty)$ is~$f(y_0)$.
\end{description}

\noindent
There are now three cases.
\begin{enumerate}[label=(\roman*)]
    \item If~$B=1$, then~$\frac{B}{2}-\frac{1}{\ln A} < 0$, hence~$f(y)\le f(0) = 1$. So~$R\le 2^{x_1} = 2^{k-2x_0-B}= 2^{k-2x_0-1}$. 
\item If~$B=2$, we have
\begin{align*}
\frac{B}{2} - \frac{1}{\ln A} &= 1 - \frac{1}{(1+\lambda^{j_0-1})\ln(1+1/\lambda^{j_0-1})} \\
& \le 1 - \frac{1}{(1+\lambda^{j_0-1})\cdot 1/\lambda^{j_0-1}} = \frac{1}{1+\lambda^{j_0-1}} \le y.
\end{align*}
So $f(y) \le f\hspace{-3pt}\pth{\frac{1}{1+\lambda^{j_0-1}}} = 2$, and $R\le 2^{x_1+1} = 2^{k-2x_0-B+1} = 2^{k-2x_0-1}$.
\item Finally, if~$B \ge 3$, then since~$f(y) \le \frac{2A^{B/2}}{e \ln A}$ we deduce that~$R\le 2^{x_1}f(y) \le 2^{k-2x_0} \cdot \underbrace{\tfrac{2(A/4)^{B/2}}{e\ln A}}_{< 1/2} < 2^{k-2x_0-1}$.
\end{enumerate}

This finishes to prove that~$R\le2^{k-2x_0-1}$.
This establishes~\eqref{eq-calculus}, and therefore~\eqref{eq:occupancy}.
The conclusion follows by taking the convex combination of~\eqref{eq:occupancy} over all possible values of~$J$.
\end{proof}

We are now ready to prove \Cref{thm:girth7}.

\begin{proof}[Proof of \Cref{thm:girth7}]
We set~$\lambda\coloneqq4$, and apply \Cref{algo} with 
\begin{align*}
\alpha_v = 1 + \frac{2^{k(v)-3}}{k(v)} \quad \et \quad \beta_v = \frac{2}{k(v)}
\end{align*}
for every vertex~$v\in V(G)$, where~$k(v)$ is chosen such that
$\frac{2\deg(v)+2^{k-3}}{k}$ is minimised when~$k=k(v)$.
The results follows from \eqref{as-A}.
\end{proof}

\section{Bounds on the Hall ratio}\label{sec-hall} 
We focus on establishing upper bounds on the Hall ratios of graphs with bounded maximum
degree and girth. These bounds are obtained by using the uniform distribution
on~$\cI_{\alpha}(G)$, for~$G$ in the considered class of graphs, into
Lemma~\ref{ratio-vertex} or Lemma~\ref{ratio-edge}.

\subsection{Structural analysis of a neighbourhood} 
We start by introducing some terminology.
\begin{defi}\label{pattern}\mbox{}
\begin{enumerate}
\item A \emph{pattern of depth~$r$} is any graph~$P$ given with a root vertex~$v$ such that \[\forall u\in V(G),\quad \di_G(u,v) \leq r.\]
The \emph{layer at depth~$i$} of~$P$ is the set of vertices at distance~$i$ from its root vertex~$v$.
\item A pattern~$P$ of depth~$r$ and root~$v$ is \emph{$d$-regular} if all its vertices have degree exactly~$d$, except maybe in the two deepest layers where the vertices are only required to have degree at most~$d$.
    \end{enumerate}
\end{defi}
\begin{defi}
    Let~$P$ be a pattern with depth~$r$ and root~$v$. Let~$\bI$ be a uniform random maximum
    independent set of~$P$. We define~$e_i(P) \coloneqq \esp{\abs{\bI \cap N^i_P(v)}}$ for each~$i
    \in\{0,\dotsc,r\}$.
\begin{enumerate} 
\item The \emph{constraint} associated to~$P$ is the pair~$c(P)=(\be(P),n(P))$, where
~$\be(P)=(e_i(P))_{i=0}^r \in{\left(\mathbb{Q}^+\right)}^{r+1}$, and~$n(P)\	\in \mathbb{N}$ is the
    \emph{cardinality} of the constraint, which is the number of maximum independent sets of~$P$.
    Most of the time, we only need to know the value of~$\be(P)$, in which case we characterise the constraint~$c(P)=(\be(P),n(P))$ only by~$\be(P)$. The value of~$n(P)$ is only needed for a technical reason, in order to be able to compute constraints inductively.
\item Given two constraints~$\be, \be' \in
    \left(\mathbb{Q}^+\right)^{r+1}$, we say that~$\be$ is
    \emph{weaker} than~$\be'$ if, for any vector~$\ba \in
    \left(\mathbb{Q}^+\right)^{r+1}$ it holds that
\[ \ba^{\!\top} \be' \ge 1 \quad \implies \quad \ba^{\!\top} \be \ge 1.\]
    If the above condition holds only for all
    vectors~$\ba\in{\left(\mathbb{Q}^+\right)}^{r+1}$ with
    non-increasing coordinates, then we say that~$\be$ is
    \emph{relatively weaker} than~$\be'$.
\end{enumerate}
\end{defi}

\noindent
Note that~$\be$ is weaker than~$\be'$ if and only if $e_i \ge e'_i$ for every $i \in \{0,\dotsc,r\}$, and~$\be$ is relatively weaker than~$\be'$ if and only if $\sum_{j=0}^i \limits e_j \ge \sum_{j=0}^i \limits e'_j$ for every $i \in \{0,\dotsc,r\}$.

\begin{rk}\label{deg2}
Let~$P$ be a pattern such that one of its vertices~$u$ is adjacent with some
    vertices~$u_1,\dotsc, u_k$ of degree~$1$ in the next layer, where~$k\ge 2$. Then every
    maximum independent set of~$P$ contains~$\{u_1,\dotsc u_k\}$ and not~$u$.
    Consequently,~$\be(P)$ is weaker than~$\be(P\setminus \{u_3, \dotsc, u_k\})$ since,
    letting~$i$ be the distance between~$u_1$ and the root of~$P$, one has
\[
e_j(P) = \begin{cases}
      e_j(P\setminus \{u_3, \dotsc, u_k\}) & \quad \text{if~$j\neq i$, and} \\
  e_i(P\setminus \{u_3, \dotsc, u_k\}) + (k-2) & \quad \mbox{if j=i.}
\end{cases}
\]
\end{rk}

\subsection{Tree-like patterns} 
\subsubsection{Rooting at a vertex}\label{root-vertex} 
Fix an integer~$r \ge 2$. Let~$G$ be a~$d$-regular graph of girth at least~$2r+2$, and
let~$\bI$ be a uniform random maximum independent set of G. For any fixed vertex~$v$, we let~$J$
be any possible realisation of~$\bI\setminus N^r[v]$, and~$W\coloneqq N^r[v]\setminus N(J)$.
By the Spatial Markov Property of the uniform distribution over the maximum independent sets
of~$G$, the random independent set~$\bI\cap N^r[v]$ follows the uniform distribution over the
maximum independent sets of~$G[W]$. Now, observe that~$G[W]$ is a~$d$-regular pattern of
depth~$r$ with root vertex~$v$, and since~$G$ has girth at least~$2r+2$, this pattern is moreover a tree.
Let~$\mathcal{T}_r(d)$ be the set of acyclic~$d$-regular patterns of depth~$r$.

Let us define~$\bX_i(v) \coloneqq \bI \cap N^i(v)$, for every~$i\in\{0,\dotsc,r\}$.  We seek
parameters~$(\alpha_i)_{i=0}^{r}$ such that the inequality~$\sum_{i=0}^r \alpha_i
\esp{\abs{\bX_i(v)}} \ge 1$ is satisfied regardless of the choice of~$v$. To this end, it is
enough to pick the rational numbers~$\alpha_i$ in such a way that the inequality is
satisfied in any tree~$T\in \mathcal{T}_r(d)$, when~$v$ is the root vertex. In a more formal
way, given any~$T\in \mathcal{T}_r(d)$, the vector~$\ba = (\alpha_0, \dotsc, \alpha_r)$ must
be \emph{compatible} with the constraint~$\be(T)$, that is,~$\ba^{\!\top}\be(T) \ge 1$ for
each~$T\in \mathcal{T}_r(d)$.

An application of Lemma~\ref{ratio-vertex} then lets us conclude that $\abs{V(G)}/\alpha(G)$ is bounded from above by the solution to the following linear program.

\begin{align}\label{lp}
\text{Minimise} &\displaystyle \quad\quad \alpha_0 + \sum_{i=1}^r \alpha_i d(d-1)^{i-1}\\
\text{such that} & \begin{cases}
    \forall T\in \mathcal{T}_r(d),& \displaystyle\sum_{i=0}^r \alpha_i e_i(T) \ge 1 \\
    \forall i \le r, & \alpha_i \ge 0.
\end{cases}\notag
\end{align}

The end of the proof is made by computer generation of~$\mathcal{T}_r(d)$, in
order to generate the desired linear program, which is then solved again by computer
computation. For the sake of illustration, we give a complete human proof of
the case where~$r=2$ and~$d=3$. There are~$10$ trees in~$\mathcal{T}_2(3)$. One can easily
compute the constraint~$(e_0(T),e_1(T),e_2(T))$ for each~$T\in
\mathcal{T}_2(3)$; they are depicted in Figure~\ref{trees}. Note that
constraints~$\be_8$,~$\be_9$ and~$\be_{10}$ are weaker than
constraint~$\be_7$, so we may disregard these constraints in the linear
program to solve. Note also that constraint~$\be_0$ is relatively
weaker than constraint~$\be_1$, and so may be disregarded as well, provided that the solution of the linear program is attained by a
vector~$\ba$ with non-increasing coordinates, which will have to be
checked. The linear program to solve is therefore the following.

\begin{align*}
&\text{Minimise } \quad\quad\alpha_0 + 3\alpha_1 + 6\alpha_2 \\
&  \text{such that } \left\{\begin{aligned}
    &  5/2 \cdot \alpha_1  + 1/2 \cdot \alpha_2 &{}\ge{}& &1\\
    &  2 \alpha_1 + 2\alpha_2 &{}\ge{}& &1 \\
    & 1/5 \cdot \alpha_0 + 8/5 \cdot \alpha_1 + 6/5 \cdot \alpha_2 &{}\ge{}& &1 \\
    & 1/3 \cdot \alpha_0 + \alpha_1 + 8/3\cdot \alpha_2 &{}\ge{}&  &1 \\
    & 1/2\cdot \alpha_0 + 1/2\cdot \alpha_1 + 4\alpha_2 &{}\ge{}& &1 \\
    & \alpha_0 + 3\alpha_2 &{}\ge{}&  &1 \\
    & \alpha_0, \alpha_1, \alpha_2 \ge 0. & &
\end{aligned}\right.
\end{align*}

The solution of this linear program is~$\frac{85}{31}\approx 2.741935$, attained
by~$\ba = \left(\frac{19}{31},\frac{14}{31},\frac{4}{31} \right)$,
which indeed has non-increasing coordinates. This is an upper bound
on~$\rho(3,6)$, though we prove a stronger one through a more involved computation in Section~\ref{sec:edge-root}.

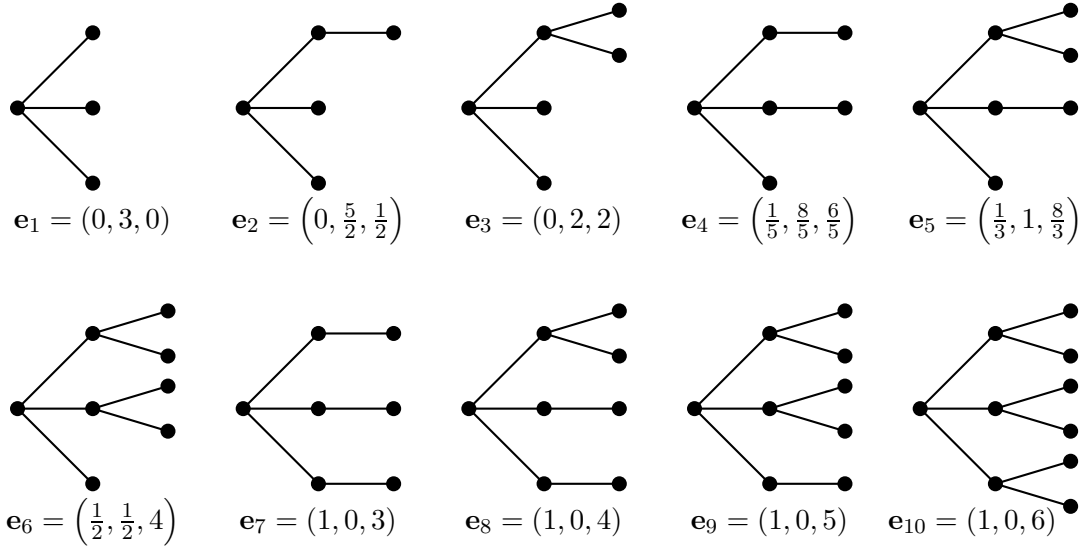
\begin{figure}[!ht]
\begin{center}
\begin{tikzpicture}[scale=0.75]

\foreach \a in {0,3.5,...,15}{
	\draw[thick] (\a,0) -- (\a+1,0);
	\draw[thick] (\a,0) -- (\a+1,1);
	\draw[thick] (\a,0) -- (\a+1,-1);
	\fill (\a,0) circle(0.1);
	\fill (\a+1,1) circle(0.1);
	\fill (\a+1,0) circle(0.1);
	\fill (\a+1,-1) circle(0.1);
	}
	
\foreach \a in {0,3.5,...,15}{
	\draw[thick] (\a,4) -- (\a+1,4);
	\draw[thick] (\a,4) -- (\a+1,5);
	\draw[thick] (\a,4) -- (\a+1,3);
	\fill (\a,4) circle(0.1);
	\fill (\a+1,5) circle(0.1);
	\fill (\a+1,4) circle(0.1);
	\fill (\a+1,3) circle(0.1);
	}
	
\node at (1,2.2) {$\mathbf{e}_1=(0,3,0)$};	
	
\draw[thick] (4.5,5) -- (5.5,5);
\fill (5.5,5) circle(0.1);
\node at (4.5,2.2) {$\mathbf{e}_2=\left(0,\frac 52,\frac 12\right)$};

\draw[thick] (8,5) -- (9,5.3);
\fill (9,5.3) circle(0.1);
\draw[thick] (8,5) -- (9,4.7);
\fill (9,4.7) circle(0.1);
\node at (8,2.2) {$\mathbf{e}_3=(0,2,2)$};

\draw[thick] (11.5,5) -- (12.5,5);
\fill (12.5,5) circle(0.1);
\draw[thick] (11.5,4) -- (12.5,4);
\fill (12.5,4) circle(0.1);
\node at (11.5,2.2) {$\mathbf{e}_4=\left(\frac{1}{5},\frac{8}{5},\frac{6}{5}\right)$};

\draw[thick] (15,5) -- (16,5.3);
\draw[thick] (15,5) -- (16,4.7);
\fill (16,5.3) circle(0.1);
\fill (16,4.7) circle(0.1);
\draw[thick] (15,4) -- (16,4);
\fill (16,4) circle(0.1);
\node at (15,2.2) {$\mathbf{e}_5=\left(\frac 13,1,\frac 83\right)$};

\draw[thick] (1,1) -- (2,1.3);
\draw[thick] (1,1) -- (2,0.7);
\fill (2,1.3) circle(0.1);
\fill (2,0.7) circle(0.1);
\draw[thick] (1,0) -- (2,0.3);
\draw[thick] (1,0) -- (2,-0.3);
\fill (2,0.3) circle(0.1);
\fill (2,-0.3) circle(0.1);
\node at (1,-1.8) {$\mathbf{e}_6=\left( \frac 12,\frac 12,4\right)$};

\draw[thick] (4.5,1) -- (5.5,1);
\fill (5.5,1) circle(0.1);
\draw[thick] (4.5,0) -- (5.5,0);
\fill (5.5,0) circle(0.1);
\draw[thick] (4.5,-1) -- (5.5,-1);
\fill (5.5,-1 ) circle(0.1);
\node at (4.5,-1.8) {$\mathbf{e}_7=(1,0,3)$};

\draw[thick] (8,1) -- (9,1.3);
\draw[thick] (8,1) -- (9,0.7);
\fill (9,1.3) circle(0.1);
\fill (9,0.7) circle(0.1);
\draw[thick] (8,0) -- (9,0);
\fill (9,0) circle(0.1);
\draw[thick] (8,-1) -- (9,-1);
\fill (9,-1) circle(0.1);
\node at (8,-1.8) {$\mathbf{e}_8=(1,0,4)$};

\draw[thick] (11.5,1) -- (12.5,1.3);
\draw[thick] (11.5,1) -- (12.5,0.7);
\fill (12.5,1.3) circle(0.1);
\fill (12.5,0.7) circle(0.1);
\draw[thick] (11.5,0) -- (12.5,0.3);
\draw[thick] (11.5,0) -- (12.5,-0.3);
\fill (12.5,0.3) circle(0.1);
\fill (12.5,-0.3) circle(0.1);
\draw[thick] (11.5,-1) -- (12.5,-1);
\fill (12.5,-1) circle(0.1);
\node at (11.5,-1.8) {$\mathbf{e}_9=(1,0,5)$};

\draw[thick] (15,1) -- (16,1.3);
\draw[thick] (15,1) -- (16,0.7);
\fill (16,1.3) circle(0.1);
\fill (16,0.7) circle(0.1);
\draw[thick] (15,0) -- (16,0.3);
\draw[thick] (15,0) -- (16,-0.3);
\fill (16,0.3) circle(0.1);
\fill (16,-0.3) circle(0.1);
\draw[thick] (15,-1) -- (16,-0.7);
\draw[thick] (15,-1) -- (16,-1.3);
\fill (16,-0.7) circle(0.1);
\fill (16,-1.3) circle(0.1);
\node at (15,-1.8) {$\mathbf{e}_{10}=(1,0,6)$};

\end{tikzpicture}
\vspace{-6pt}
\caption{An enumeration of~$\be(T)$ for all trees~$T\in \mathcal{T}_2(3)$.}\label{trees}
\end{center}
\end{figure}

To compute~$\be(T)$ for each~$T\in \mathcal{T}_r(d)$, one can
enumerate all the maximum independent sets of~$T$ and average
the size of their intersection with each layer of~$T$. For general graphs,
there might be no better way of doing so, however the case
of~$\mathcal{T}_r(d)$ can be treated inductively by a standard approach: we
distinguish between the maximum independent sets that contain the root and
those that do not. To this end,
we slightly extend our notion of ``constraint'' to any pair~$(\be,n)$
where~$\be\in{\left(\mathbb{Q}^+\right)}^{r+1}$ and~$n$ is a non-negative integer;
the constraints we shall use have a combinatorial interpretation
with respect to some pattern.

\begin{defi}\label{def-cjoin}
Let~$c=(\be,n)$ and~$c'=(\be',n')$ be two constraints.
\begin{enumerate}
\item The operation~$\vee$ on~$c$ and~$c'$ returns the constraint
\[ c \vee c' \coloneqq \begin{cases}
    \pth{\dfrac{n}{n+n'} \be + \dfrac{n'}{n+n'}\be', n+n'}
    & \quad\text{if }\lVert\be\rVert_1=\lVert\be'\rVert_1,\\
    c & \quad\text{if }\lVert\be\rVert_1>\lVert\be'\rVert_1,\\
    c' & \quad\text{if }\lVert\be\rVert_1<\lVert\be'\rVert_1.
\end{cases}\]
\item 
The operation~$\oplus$ on~$c$ and~$c'$ returns the
constraint~$c \oplus c' \coloneqq \big(\be + \be', n\cdot n' \big)$.
\end{enumerate}
\end{defi}

For a given tree~$T\in \mathcal{T}_r(d)$ with root~$v$, let~$c_1(T)$ be the constraint
associated to~$T$ where~$v$ is forced (that is, we restrict to the maximum independent sets that
contain~$v$ when computing the constraint~$c_1(T)$), and let~$c_0(T)$ be the constraint
associated to~$T$ where~$v$ is forbidden.  It readily follows from
Definition~\ref{def-cjoin} that
\[c(T) = c_0(T) \vee c_1(T).\]

If~$(T_i)_{i=1}^{d}$ are the (possibly empty) subtrees of~$T$ rooted at the children of the
root~$v$, then

\vspace{-12pt}
\begin{align*}
    c_0(T) &= \big( (0,\be), n \big) && \mbox{ where } (\be,n) = \bigoplus_{i=1}^{d} c(T_i), \quad\text{and}\\
    c_1(T) &= \big( (1,\be), n \big) && \mbox{ where } (\be,n) = \bigoplus_{i=1}^{d} c_0(T_i).
\end{align*}
We thus obtain an inductive way of computing~$\be(T)$ by using the following initial values.

\begin{align*}
c_0(\varnothing) &\coloneqq \big((0),1\big) & c_1(\varnothing) &\coloneqq \big((0),0\big) \\
c_0(\{v\}) &\coloneqq \big((0),1\big) & c_1(\{v\}) &\coloneqq \big((1),1\big).\\
\end{align*}

Using this inductive way to enumerate the vectors~$\be(T)$ for~$T\in
\mathcal{T}_r(d)$, the following statement is obtained by computer calculus.

\begin{lemma}\label{vertex-bounds}
The solution to the linear program~\eqref{lp} is
\begingroup
\allowdisplaybreaks
\begin{align*}
\mathcal{T}_3(3) \colon \quad& \frac{5849}{2228} \approx 2.625224 && \text{with } \ba = \left(\frac{953}{2228}, \frac{162}{557}, \frac{81}{557}, \frac{21}{557} \right),\\
\mathcal{T}_4(3) \colon \quad& \frac{2098873192}{820777797} \approx 2.557176 &&\text{with }  \ba = \left(\frac{225822361}{820777797},\frac{18575757}{91197533},\frac{10597368}{91197533},\right.&\\*
&&&\multicolumn{1}{r}{$\left.\dfrac{5054976}{91197533},\dfrac{1172732}{91197533}\right),$}\\
\mathcal{T}_5(3) \colon\quad& \frac{29727802051155412}{11841961450578397} \approx 2.510378  && \multicolumn{2}{l}{with~$\ba = \left(\dfrac{3027359065168972}{11841961450578397},\dfrac{2216425114872980}{11841961450578397},\right.$}\\*
& \multicolumn{4}{r}{$\left.\dfrac{2224040336719575}{23683922901156794},\dfrac{2026654050681425}{47367845802313588},\dfrac{403660478424775}{23683922901156794},\dfrac{51149140376400}{11841961450578397}\right),$} \\
\mathcal{T}_3(4) \colon\quad& \frac{7083927}{2331392} \approx 3.038497 &&\multicolumn{2}{l}{with~$\ba =  \left(\dfrac{123345}{333056},\dfrac{68295}{291424},\dfrac{12283}{145712},\dfrac{2911}{145712}\right),$} \\
\mathcal{T}_4(4) \colon\quad& 3  && \text{with } \ba = \left(\frac{7}{43},\frac{6}{43}, \frac{19}{258}, \frac{7}{258}, \frac{1}{258} \right),\\ 
\mathcal{T}_2(5) \colon\quad& \frac{69}{19} \approx 3.631579 &&\text{with }  \ba = \left(\frac{37}{57}, \frac{6}{19}, \frac{4}{57}\right), \\
\mathcal{T}_3(5) \colon\quad& \frac{7}{2} = 3.5 && \text{with }  \ba = \left(\frac{77}{282},\frac{25}{141},\frac{17}{282},\frac{2}{141}\right).
\end{align*}
\endgroup
\end{lemma}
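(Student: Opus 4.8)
The plan is to reduce the statement to a finite, fully verifiable computation: for each listed pair $(r,d)$, generate the family $\mathcal{T}_r(d)$ together with the vectors $\be(T)$, assemble the linear program~\eqref{lp}, solve it, and — crucially, to make this a proof rather than experimental evidence — output a rational certificate of optimality consisting of matching primal and dual solutions. All arithmetic is carried out over $\mathbb{Q}$.

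\textbf{Generating the constraints.} For the enumeration I would use exactly the recursive description given above: carry along with each pattern the pair $c(T)=(\be(T),n(T))$, combine the children subtrees with the operations $\oplus$ and $\vee$ of \Cref{def-cjoin} starting from the stated initial values of $c_0$ and $c_1$, and recover $c(T)=c_0(T)\vee c_1(T)$ at each node. Correctness of this scheme is a routine induction on the depth, splitting the maximum independent sets of $T$ according to whether they contain the root (which is precisely why both $c_0(T_i)$ and $c(T_i)$ must be tracked for the children $T_i$): the operation $\oplus$ realises taking a union over independent subtrees and $\vee$ realises ``keep only the sets of maximum total size, and count them''. To keep the program small I would, exactly as in the worked case $r=2$, $d=3$, discard during the recursion every constraint $\be(T)$ that is \emph{weaker} than another retained constraint — this removes a genuinely implied inequality and hence does not change the feasible region — and, optionally, every constraint that is only \emph{relatively weaker} than another; the latter reduction is legitimate provided the optimal vector $\ba$ has non-increasing coordinates, which one reads off directly from the claimed solutions (for instance $\tfrac{953}{2228}>\tfrac{162}{557}>\tfrac{81}{557}>\tfrac{21}{557}$ for $\mathcal{T}_3(3)$), so this final check must simply be recorded.

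\textbf{Solving and certifying.} The reduced system has the form $\min\,c^{\!\top}\ba$ subject to $A\ba\ge\mathbf 1$ and $\ba\ge0$, where the rows of $A$ are the retained vectors $\be(T)$ and $c=(1,\,d,\,d(d-1),\dots,d(d-1)^{r-1})$ is the objective vector supplied by \Cref{ratio-vertex} (whose hypothesis $\sum_{i\ge1}\alpha_i(d-1)^{i-1}\ge0$ holds automatically since $\ba\ge0$). It then suffices to exhibit, in exact arithmetic, a matching pair: the claimed primal vector $\ba$, for which one verifies that every coordinate is non-negative, that $\ba^{\!\top}\be(T)\ge1$ for every generated $T$, and that $c^{\!\top}\ba$ equals the stated value $V$ (this proves $\mathrm{OPT}\le V$); and a dual vector $y\ge0$, supported on finitely many of the constraints, with $A^{\!\top}y\le c$ and $\mathbf 1^{\!\top}y=V$ (this proves $\mathrm{OPT}\ge V$). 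Together these force $\mathrm{OPT}=V$ with no reliance on a floating-point solver — the LP solve is used only to \emph{produce} the two vectors, after which the two one-line linear-algebra checks over $\mathbb{Q}$ conclude.

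\textbf{The main obstacle.} The difficulty is purely computational and has two facets. First, $|\mathcal{T}_r(d)|$ grows extremely quickly — it is already sizeable for $\mathcal{T}_5(3)$ and $\mathcal{T}_4(4)$, which is visible in the large denominators of the corresponding answers — so a naive enumeration is hopeless and one must prune with the ``weaker'' relation while building the patterns and store constraints in a canonical form to collapse duplicates; getting this pruning both correct (never discarding a constraint that is not implied by a retained one) and aggressive enough is the real engineering content. Second, exactness: the optimum and, above all, its certificate must be computed over $\mathbb{Q}$, since a numerically ``optimal'' vertex can be feasible only up to rounding error; the explicit dual certificate described above is exactly what upgrades the computer output to the asserted equalities.
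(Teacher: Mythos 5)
Your proposal follows essentially the same route as the paper: enumerate $\mathcal{T}_r(d)$ by computer using the recursive constraint computation of \Cref{def-cjoin} (with the weaker/relatively-weaker pruning as in the worked $\mathcal{T}_2(3)$ example), assemble the linear program~\eqref{lp}, and solve it by machine. Your additional insistence on exact rational arithmetic with a matching primal--dual certificate is a sensible strengthening of the verification, but it does not change the underlying argument.
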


\subsubsection{Rooting in an edge}\label{sub-edge-rooted} 
Definition~\ref{pattern} can be extended to a pattern with a root-edge instead of a
root-vertex. The distance in a pattern~$P$ between a vertex~$w$ and an edge~$uv$ is defined
to be~$\min\{\di_P(w,u),\di_P(w,v)\}$. The depth of a pattern~$P$ rooted in an edge~$e$ is
then the largest distance between~$e$ and a vertex in~$P$.
It is possible to follow the same analysis as in Section~\ref{root-vertex} with edge-rooted
patterns: in order for the edge-rooted pattern of depth~$r$ to always be a tree, the
graph~$G$ must have girth at least~$2r+3$. Let~$\mathcal{T}'_r(d)$ be the set of acyclic
edge-rooted~$d$-regular patterns of depth~$r$. By \Cref{ratio-edge}, the linear program
to solve is now the following.

\begin{align}\label{lp-edge}
    \frac{\abs{V(G)}}{\alpha(G)} &\le  \displaystyle\min \quad 2 \sum_{i=0}^r \alpha_i (d-1)^i\\
\text{such that} & \begin{cases}
    \forall T\in \mathcal{T}'_r(d),& \displaystyle\sum_{i=0}^r \alpha_i e_i(T) \ge 1 \\
    \forall i \le r, & \alpha_i \ge 0.
\end{cases}\notag
\end{align}

For a given tree~$T \in \mathcal{T}'_r(d)$ rooted in~$e=uv$, it is
possible to compute~$\be(T)$ using the constraints associated to
vertex-rooted trees. If~$T_u$ and~$T_v$ are the subtrees of~$T$ respectively rooted at~$u$
and at~$v$, then it readily follows from Definition~\ref{def-cjoin} that
\begin{equation}\label{eq-prod}
c(T) = \Big(c_0(T_u)\oplus c_0(T_v)\Big) \vee
\Big(c_0(T_u)\oplus c_1(T_v)\Big) \vee
\Big(c_1(T_u)\oplus c_0(T_v)\Big).
\end{equation}

Following the enumeration of the vectors~$\be(T)$ for~$T\in
\mathcal{T}'_r(d)$ described earlier, the next statement is obtained by
computer calculus.

\begin{lemma}\label{edge-bounds}
The solution to the linear program~\eqref{lp-edge} is
\begin{align*}
\mathcal{T}'_2(3)\colon&\quad \frac{30}{11} \approx 2.72727 && \text{with } \ba {}={} \pth{\frac{1}{2},\frac{13}{44},\frac{3}{44}},\\
\mathcal{T}'_3(3)\colon&\quad \frac{125}{48} \approx 2.604167	&& \text{with }  \ba {}={} \left(\frac{11}{32},\frac{5}{24},\frac{3}{32}, \frac{1}{48}\right),\\
\mathcal{T}'_4(3)\colon&\quad \frac{14147193}{5571665} \approx 2.539132	&&\text{with }  \ba {}={} \left(\frac{98057}{506515},\frac{159348}{1114333},\frac{3688469}{44573320}, \frac{1752117}{44573320}, \frac{402569}{44573320}\right),\\
\mathcal{T}'_2(4)\colon&\quad \frac{41}{13} \approx 3.153846  && \text{with }  \ba {}={} \left(\frac{11}{26},\frac{3}{13},\frac{2}{39}\right),\\
\mathcal{T}'_3(4)\colon&\quad \frac{127937}{42400} \approx 3.017382  && \text{with }  \ba {}={} \left(\frac{5539}{16960}, \frac{1737}{10600}, \frac{257}{5300}, \frac{399}{42400}\right),\\
\mathcal{T}'_2(5)\colon&\quad \frac{18}{5} = 3.6 && \text{with }  \ba {}={} \left(\frac{17}{45},\frac{8}{45},\frac{2}{45}\right).
\end{align*}
\end{lemma}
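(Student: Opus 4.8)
The plan is to treat \Cref{edge-bounds} as what it really is --- a finite exact computation --- and to make it independently checkable by supplying, for each of the six pairs $(r,d)$, not only the optimal value and optimal solution $\ba$ already listed, but also the short data that certify optimality. Concretely, I would (i) generate the constraint set $\sst{\be(T)}{T\in\mathcal{T}'_r(d)}$, (ii) solve~\eqref{lp-edge} exactly, and (iii) verify the claimed answer by combining primal feasibility of the reported $\ba$ with a matching dual point, so that no trust in an LP solver is required.

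For step (i) I would not enumerate edge-rooted trees directly, but build them from vertex-rooted ones exactly as in the paragraph preceding~\eqref{eq-prod}. Starting from the base constraints $c_0(\varnothing)$, $c_1(\varnothing)$, $c_0(\{v\})$, $c_1(\{v\})$ and iterating the operations $\oplus$ and $\vee$ of \Cref{def-cjoin}, one obtains, for each depth $i\le r$, the set of constraints $c(T)=(\be(T),n(T))$ realised by the vertex-rooted trees of $\mathcal{T}_i(d)$ (the cardinalities $n(T)$ must be carried along so that $\vee$ stays well defined). Pairing two such subtrees of depth $r-1$ through formula~\eqref{eq-prod} then produces every $\be(T)$ with $T\in\mathcal{T}'_r(d)$. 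Throughout the generation I would discard any constraint that is \emph{weaker} than another one, and --- with the caveat noted below --- also any constraint that is \emph{relatively weaker} than another; the argument already used in the vertex-rooted setting (\Cref{deg2} and the discussion around it) shows this leaves the optimum unchanged. All quantities being rationals with small denominators, the whole enumeration and the LP are done in exact arithmetic.

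For step (iii), fix the reported pair $(\ba,v^\ast)$ for a given $(r,d)$. First check that $\ba$ has non-negative, \emph{non-increasing} coordinates and that $\ba^{\!\top}\be(T)\ge1$ for every retained $T$: the monotonicity of $\ba$ is precisely what legitimises, a posteriori, the ``relatively weaker'' pruning, since a vector with non-increasing coordinates that satisfies the kept constraints automatically satisfies the discarded ones, so $\ba$ is feasible for the full program~\eqref{lp-edge}. Next, identify a subset of at most $r+1$ trees on which $\ba$ is tight and exhibit non-negative multipliers $(y_T)$ with $\sum_T y_T\,\be(T)$ equal, coordinatewise, to the objective vector $(2,2(d-1),\dots,2(d-1)^r)$ and with $\sum_T y_T=v^\ast$; this is a dual feasible point of value $v^\ast$, so by weak LP duality $v^\ast$ is optimal and $\ba$ is an optimal solution. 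Finally, \Cref{ratio-edge} turns $v^\ast$ into the stated upper bound on $\abs{G}/\alpha(G)$ for $d$-regular graphs of girth at least $2r+3$ (and then, via the regularisation corollary, for $\rho(d,2r+3)$), which is consistent with --- and, thanks to the larger girth, at least as strong as --- the vertex-rooted bounds of the previous subsection.

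The main obstacle is purely the size of $\mathcal{T}'_r(d)$: while $\mathcal{T}'_2(3)$ and $\mathcal{T}'_2(5)$ are tiny, the number of depth-$4$ cubic patterns and of depth-$3$ quartic patterns is large enough that a naive enumeration, or inexact arithmetic, would be infeasible --- one really needs the inductive $\oplus/\vee$ bookkeeping of \Cref{def-cjoin} together with pruning at every level to keep the constraint lists short. The only genuinely delicate mathematical point is the interaction between this pruning and the sign/monotonicity hypotheses required to invoke \Cref{ratio-edge}, and that is settled by the a posteriori verification that each reported optimal $\ba$ has non-increasing coordinates.
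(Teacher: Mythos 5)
Your proposal is correct and follows essentially the same route as the paper: the constraints $\be(T)$ for $T\in\mathcal{T}'_r(d)$ are generated inductively from vertex-rooted subconstraints via the $\oplus/\vee$ operations and formula~\eqref{eq-prod} (carrying the cardinalities $n(T)$, and pruning weaker or relatively weaker constraints with the monotonicity of the optimal $\ba$ checked a posteriori, exactly as in the worked $\mathcal{T}_2(3)$ example), and the resulting linear program~\eqref{lp-edge} is solved exactly by computer. Your additional step of certifying optimality through an explicit dual feasible point is a useful verification layer but not a different argument.
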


The bounds obtained in Lemma~\ref{edge-bounds} are valid for graphs of girth at
least~$2r+3$. It turns out that the same bounds, with the
same~$\ba$, remain valid for graphs of girth~$2r+2=6$, when~$r=2$
and~$d \in \{3,4\}$. We were not able to check this for higher values of~$r$
or~$d$, but we propose the following conjecture which would explain and
generalise this phenomenon.

\begin{conj}\label{conj}
Let~$P$ be a~$d$-regular edge-rooted pattern of depth~$r$ and of girth~$2r+2$.
Then the constraint~$\be(P)$ is weaker than some convex
combination of constraints~$\be(T)$ with~$T\in \mathcal{T}'_r(d)$.
More formally, there exist~$T_1, \dotsc, T_m \in \mathcal{T}'_r(d)$ and
$\lambda_1, \dotsc, \lambda_m \in [0,1]$ with~$\sum_{i=1}^m \lambda_i = 1$
such that for any~$\ba\in{\left(\mathbb{Q}^+\right)}^{r+1}$,
\[ \ba^{\!\top}\left(\sum_{i=1}^m
\lambda_i \be(T_i) \right) \ge 1 \quad \implies \quad
\ba^{\!\top} \be(P) \ge 1.\]
\end{conj}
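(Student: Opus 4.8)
The plan is to argue by induction on the cyclomatic number of~$P$ (the number of chords with respect to a fixed spanning tree), peeling off one cycle at a time by ``splitting'' a deep edge of~$P$ in exactly the way the operations~$\vee$ and~$\oplus$ of Definition~\ref{def-cjoin} split the root edge.

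The first step is structural. Rooting a breadth-first search tree of~$P$ at one endpoint of~$e$ and using that~$P$ has depth~$r$ and girth~$2r+2$, a short count over the breadth-first layers shows that every chord of~$P$ joins two vertices lying in the two deepest layers~$N^{r-1}_P(e)\cup N^{r}_P(e)$ and that its fundamental cycle passes through~$e$. Fix one chord~$f=w_1w_2$. Let~$\bI$ be a uniform random maximum independent set of~$P$, and split the maximum independent sets of~$P$ into three classes according to whether~$w_1\in\bI$, or~$w_2\in\bI$, or~$w_1,w_2\notin\bI$ (the case~$w_1,w_2\in\bI$ being impossible as~$w_1\sim w_2$). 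A maximality and cardinality check --- the same one underlying the identities~$c(T)=c_0(T)\vee c_1(T)$ and~\eqref{eq-prod} --- shows that each nonempty class is in measure-preserving bijection with the set of maximum independent sets of one of the three patterns~$P-N_P[w_1]$, $P-N_P[w_2]$, $P-\{w_1,w_2\}$, and that, up to the cardinality weights that~$\vee$ records, $\be(P)$ equals the~$\vee$-combination of their (suitably indexed) constraints. Each of these three patterns has strictly smaller cyclomatic number than~$P$; granting the truncation lemma of the next paragraph, each can be replaced by a genuine~$d$-regular edge-rooted pattern of depth~$r$ and girth~$2r+2$ whose constraint is relatively weaker. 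Since a~$\vee$-combination of constraints is, by definition, either one of its arguments or a convex combination of them, and since coordinate-wise domination passes through convex combinations, the induction hypothesis applied to the three truncated patterns and composed through the~$\vee$-combination would exhibit~$\be(P)$ as weaker than a convex combination of constraints~$\be(T)$ with~$T\in\mathcal{T}'_r(d)$.

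The main obstacle --- presumably the reason the statement is left as a conjecture --- is precisely this truncation lemma. Deleting the deep edge~$f$ (which sits inside~$N_P[w_i]$) cuts the only short route to part of the bottom layers, so a vertex at depth~$i\le r$ in~$P$ can end up at depth larger than~$r$, and deleting~$N_P[w_i]$ can leave a vertex of degree smaller than~$d$ outside the two deepest layers. One needs that deleting everything strictly below depth~$r$ and completing the remaining degrees with pendant leaves only makes a constraint \emph{relatively weaker} --- which suffices here because the optimal vector~$\ba$ in~\eqref{lp-edge} is checked to have non-increasing coordinates. Proving this amounts to controlling how conditioning on a deep layer perturbs~$\esp{\abs{\bI\cap N^{i}_P(e)}}$ for~$i<r$ when~$\bI$ is uniform over \emph{maximum} independent sets: this is the locality phenomenon that Lemma~\ref{lem:markov} resolves cleanly for the hard-core model at a finite fugacity, but it is much more delicate at~$\lambda=\infty$, where removing a few deep vertices can change which independent sets are maximum and thus shift the whole profile~$(\esp{\abs{\bI\cap N^{i}_P(e)}})_i$. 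Making this quantitative, and checking that ``relatively weaker'' composes correctly all the way up the induction, is where I expect the real work --- and the reason a computer verification was used instead for small~$(r,d)$ --- to lie.
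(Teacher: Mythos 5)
First, be aware that the statement you were asked about is \Cref{conj}: it is an \emph{open conjecture} in the paper, stated after the authors report that they verified the phenomenon only computationally for $r=2$, $d\in\{3,4\}$ and ``were not able to check this for higher values of $r$ or $d$''. So there is no proof in the paper to match, and your proposal does not close the gap either: you yourself flag the ``truncation lemma'' as unproved, and that lemma is not a technical footnote but the entire content of the conjecture. Replacing $P-N_P[w_1]$, $P-N_P[w_2]$, $P-\{w_1,w_2\}$ by genuine $d$-regular edge-rooted patterns of depth $r$ and girth $2r+2$ whose constraints dominate requires exactly the kind of control over how a uniform maximum independent set reacts to deep modifications that is missing at $\lambda=\infty$; deleting a few deep vertices can change $\alpha$ and hence swap which class of independent sets survives the $\vee$-operation of Definition~\ref{def-cjoin}, shifting every coordinate $e_i$, including $e_0$. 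Until that is controlled, the induction on the cyclomatic number has no base-to-step mechanism, so the proposal is a programme rather than a proof.

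Two further points would need repair even if a truncation lemma were available. First, the structural claim that every chord of $P$ has both endpoints in the two deepest layers and that its fundamental cycle passes through the root edge $e$ is not justified: girth $2r+2$ and depth $r$ force any cycle to be long and to reach deep layers, but they do not obviously force it through $e$, nor confine both endpoints of every chord to depths $r-1$ and $r$; a cycle of length $2r+2$ threading back and forth between layers $r-1$ and $r$ is not excluded by the counting you sketch. Second, your scheme only ever produces ``relatively weaker'' comparisons (domination of partial sums, i.e.\ validity against vectors $\ba$ with non-increasing coordinates), whereas \Cref{conj} asserts the implication for \emph{all} $\ba\in\left(\mathbb{Q}^+\right)^{r+1}$. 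This weaker conclusion may well suffice for the intended application to the linear program~\eqref{lp-edge}, since the optimal $\ba$ there is checked to have non-increasing coordinates, but it would prove a strictly weaker statement than the conjecture as formulated, and you would also need to verify that ``relatively weaker'' is preserved under the $\oplus$ and $\vee$ compositions and under passing to convex combinations at every level of the induction.
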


\vspace{6pt}
\subsection{More complicated patterns} 
\subsubsection{Rooting at a vertex} 
Let us fix a depth~$r \ge 2$. Let~$G$ be a~$d$-regular graph of girth~$g
\le 2r+1$. We repeat the same analysis as in Section~\ref{root-vertex}: we end
up having to find a vector~$\ba \in \mathbb{Q}^{r+1}$
compatible with all the constraints generated by vertex-rooted~$d$-regular
patterns of depth~$r$ and girth~$g$. Letting~$\mathcal{P}_r(d,g)$ be the
set of such patterns, we thus want that
\[ \forall P \in \mathcal{P}_r(d,g), \quad
\ba^{\!\!\top} \be(P) \ge 1. \]

In this setting, we could do no better than performing an exhaustive
enumeration of every possible pattern~$P \in \mathcal{P}_r(d,g)$, and computing the associated
constraint~$\be(P)$ through an exhaustive enumeration of
$\cI_{\alpha}(P)$. The complexity of such a process grows fast,
and we considered only depth~$r\le2$ and degree~$d\le 4$.
Since the largest value of the Hall ratio over the class
of~$3$-regular graphs of girth~$4$ or~$5$ is known to be~$\frac{14}{5}=2.8$, and the one of~$4$-regular
graphs of girth~$4$ is known to be~$\frac{13}{4}=3.25$, the only open value in
these settings is for the class of~$4$-regular graphs of girth~$5$.
Unfortunately, this method is not powerful enough to prove an upper bound lower
than~$\frac{13}{4}$, the obtained bound for~$\mathcal{P}_2(4,5)$
being~$\frac{82}{25} = 3.28$. It is more interesting to root the patterns in an edge.

\subsubsection{Rooting in an edge}\label{sec:edge-root} 
Similarly, we define~$\mathcal{P}'_r(d,g)$ to be the set of edge-rooted
$d$-regular patterns of girth~$g$. For fixed~$r$ and~$g$, we seek for the
solution of the following linear program.

\begin{align}\label{lp-edge-6}
    \frac{\abs{V(G)}}{\alpha(G)} &\le  \min \quad 2 \sum_{i=0}^r \alpha_i (d-1)^i\\
\text{such that} & \begin{cases}
    \forall P\in \mathcal{P}'_r(d,g),& \displaystyle\sum_{i=0}^r \alpha_i e_i(P) \ge 1 \\
    \forall i \le r, & \alpha_i \ge 0.
\end{cases}\notag
\end{align}

Again, our computations were limited to the cases where~$r\le2$ and~$d\le 4$.
However, we managed to prove improved bounds for girth~$6$ when~$d\in \{3,4\}$,
which seems to support Conjecture~\ref{conj}.

\begin{lemma}\label{lem:program-girth6}
The solution to the linear program~\eqref{lp-edge-6} is
\begin{equation*}
    \begin{aligned}[c]
\mathcal{P}'_2(3,6)\colon\qquad & \frac{30}{11} \approx 2.72727\\
\mathcal{P}'_2(4,6)\colon\qquad & \frac{41}{13} \approx 3.153846\\
\end{aligned}
\qquad
    \begin{aligned}[c]
        \text{with }  \ba &{}={} \left(\frac{1}{2},\frac{13}{44},\frac{3}{44}\right),\\
        \text{with }  \ba &{}={} \left(\frac{11}{26},\frac{3}{13},\frac{2}{39}\right).
\end{aligned}
\end{equation*}
\end{lemma}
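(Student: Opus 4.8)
The plan is to pin down the minimum of the linear program~\eqref{lp-edge-6} for $(r,g)=(2,6)$ by sandwiching it between two matching bounds, the lower one coming essentially for free. Indeed, an acyclic edge-rooted $d$-regular pattern of depth~$2$ has girth $+\infty\ge 6$, so $\mathcal{T}'_2(d)\subseteq\mathcal{P}'_2(d,6)$; hence every constraint of the linear program~\eqref{lp-edge} for $\mathcal{T}'_2(d)$ is also a constraint of~\eqref{lp-edge-6}. As the two programs share the objective $2\sum_{i=0}^r\alpha_i(d-1)^i$, enlarging the constraint set can only raise the minimum, so the optimum of~\eqref{lp-edge-6} is at least that of~\eqref{lp-edge}, which by \Cref{edge-bounds} equals $\tfrac{30}{11}$ for $d=3$ and $\tfrac{41}{13}$ for $d=4$. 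If one prefers a self-contained certificate to an appeal to \Cref{edge-bounds}, it suffices to record, for each $d$, a nonnegative combination of a few tree constraints $\be(T)$, $T\in\mathcal{T}'_2(d)$, that are tight at the optimal $\ba$; such a dual certificate is read off from the solution of~\eqref{lp-edge}.

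For the matching upper bound I would show that the vector optimal for the tree program stays feasible for~\eqref{lp-edge-6}, namely $\ba=\pth{\tfrac12,\tfrac{13}{44},\tfrac{3}{44}}$ when $d=3$ and $\ba=\pth{\tfrac{11}{26},\tfrac{3}{13},\tfrac{2}{39}}$ when $d=4$. Its objective value is the claimed one by a one-line computation, $2\pth{\tfrac12+2\cdot\tfrac{13}{44}+4\cdot\tfrac{3}{44}}=\tfrac{30}{11}$ and $2\pth{\tfrac{11}{26}+3\cdot\tfrac{3}{13}+9\cdot\tfrac{2}{39}}=\tfrac{41}{13}$, so once feasibility is established this $\ba$ is optimal and the minimum is exactly $\tfrac{30}{11}$, respectively $\tfrac{41}{13}$. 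Feasibility against the tree constraints is \Cref{edge-bounds}, so the remaining task is to check $\ba^{\!\top}\be(P)\ge 1$ for every $P\in\mathcal{P}'_2(d,6)$ that contains a cycle. There are only finitely many such patterns up to isomorphism: $P$ is determined by the number of further neighbours of each endpoint of the root edge, by the bipartite adjacency between the first and second layers, and by which second-layer vertices are identified so as to create a (short) cycle through the first two layers. I would enumerate them by computer, discarding isomorphic duplicates and any pattern whose constraint is weaker than a retained one (the domination used in \Cref{deg2}), compute $\be(P)=(e_0(P),e_1(P),e_2(P))$ for each by brute-force enumeration of $\cI_\alpha(P)$, and verify the inequality.

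The main obstacle is exactly this last verification. For trees the vector $\be(T)$ is computed cheaply and inductively through the join-and-sum recursion of \Cref{def-cjoin}, using the decomposition of a tree into the subtrees hanging below the root; once $P$ carries a cycle this recursion breaks, so $\cI_\alpha(P)$ must be found by exhaustive search, and one has to be confident that the enumeration of cyclic patterns is genuinely complete. The point of isolating the statement is the outcome: the tree-optimal $\ba$ survives the extra constraints unchanged. This is precisely the $r=2$, $d\in\{3,4\}$ instance of \Cref{conj}, and it is what makes the bounds $\rho(3,6)\le\tfrac{30}{11}$ and $\rho(4,6)\le\tfrac{41}{13}$ in \Cref{table-thm7} agree with the girth-$7$ bounds.
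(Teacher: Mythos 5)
Your overall architecture is the one the paper itself uses: the paper proves this lemma by an exhaustive computer enumeration of the patterns in $\mathcal{P}'_2(d,6)$, a brute-force computation of each constraint $\be(P)$ from $\cIM(P)$, and a computer solution of the linear program~\eqref{lp-edge-6}. Your repackaging --- lower bound by constraint-set inclusion $\mathcal{T}'_2(d)\subseteq\mathcal{P}'_2(d,6)$ together with \Cref{edge-bounds}, upper bound by checking that the tree-optimal $\ba$ remains feasible --- is equivalent and correct, and your objective-value computations check out.

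There is, however, a concrete flaw in your blueprint for generating the cyclic patterns, and that generation is exactly the content of the lemma beyond \Cref{edge-bounds}. In a depth-$2$ edge-rooted pattern of girth at least~$6$, a second-layer vertex can never have two first-layer neighbours: if both of those neighbours hang from the same endpoint of the root edge this closes a $4$-cycle, and if they hang from different endpoints it closes a $5$-cycle through the root edge. So ``identifying second-layer vertices'' produces nothing admissible, and the bipartite adjacency between layers $1$ and $2$ is forced to be a disjoint union of stars. Every cycle in a member of $\mathcal{P}'_2(d,6)$ therefore uses edges \emph{inside} the second layer (the simplest example being the $6$-cycle $u\,a\,x\,y\,b\,v\,u$ with $a,b$ in layer~$1$ and $x,y$ in layer~$2$; for $d=4$ there are also $6$-cycles lying entirely in layer~$2$), and such intra-layer edges are absent from your parametrisation. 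An enumeration implemented as described would discard every candidate with an identification as a girth violation and silently verify only the tree constraints, i.e.\ prove nothing beyond \Cref{edge-bounds}. The fix is to enumerate, on top of the star forest between layers $1$ and $2$, all sets of edges within layer~$2$ compatible with the degree bound and with girth at least~$6$; with that corrected generation (and the brute-force computation of $\be(P)$, since the tree recursion of \Cref{def-cjoin} indeed no longer applies) the rest of your plan goes through and coincides with the paper's computation.
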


\subsection{Proof of \texorpdfstring{\Cref{thm:ratio}}{Theorem 11}}

We are now ready to give a proof of the values presented in \Cref{tab:4}.

\begin{proof}[Proof of \Cref{thm:ratio}]
We want to establish the upper bounds on~$\rho(d,g)$ presented in \Cref{tab:4}. They all follow from an application of \Cref{ratio-vertex}
or \Cref{ratio-edge} using the solution of the appropriate linear program.
\begin{itemize}
    \item The upper bounds for~$\rho(3,6)$ and~$\rho(4,6)$ follow from \Cref{lem:program-girth6} by applying \Cref{ratio-edge}.
    \item The upper bounds for~$\rho(5,6)$, for~$\rho(d,8)$ with~$d\in\{3,4,5\}$, for~$\rho(d,10)$ with~$d\in\{3,4\}$
        and for~$\rho(3,12)$ follow from \Cref{vertex-bounds} by applying \Cref{ratio-vertex}.
    \item The upper bounds for~$\rho(d,7)$ with~$d\in\{3,4,5\}$, for~$\rho(d,9)$ with~$d\in\{3,4\}$ and for~$\rho(3,11)$
        follow from \Cref{edge-bounds} by applying \Cref{ratio-edge}.
    \qedhere
\end{itemize}
\end{proof}

\section{Conclusion}
We finish with a discussion about our method, its possible future applications, and its limitations.

\subsection{Going further}
We have decided to restrict ourselves in this work to graphs with given girth. Our general framework reduces the remaining work to an analysis of the behaviour of the hard-core distribution in trees of a given depth, which minimises the need to use massive computations through a computer in order to derive our results.
In the future, it will be interesting to apply our framework to other classes of graphs. We can imagine any class of graphs which is locally constrained, that is graphs of maximum degree~$d$ whose neighbourhoods up to a fixed depth~$r$ is a strict subset of graphs of maximum degree~$d$ and radius~$r$. For instance, one could consider the Hall ratio and fractional chromatic number of~$C_\ell$-free graphs for a fixed value of~$\ell$, of~$K_4$-free graphs, or of squares of graphs of girth at least some constant~$g$.

It would be interesting to be able to feed our framework with other probability distributions on the independent sets. A promising one would be the following: let~$\bI$ be a random maximal independent set obtained greedily by fixing a uniform random priority ordering on the vertices. Shearer~\cite{She83}  observed that~$\esp{\bI}$ matches the bound given in \Cref{thm:shearer}. However, such a distribution has a strong global dependency; it might be really difficult --- if possible at all --- but probably fruitful, to find a spatial Markov property satisfied by this distribution.

\subsection{On the hard-core model}
In the present work, we have fed our method with the hard-core distribution at different regimes. We have considered all the independent sets with a fugacity~$\lambda$ tending to~$0$, restricted ourselves to the maximal independent sets with a fugacity~$\lambda$ equal to~$4$, and then to the maximum independent sets, which forces the uniform distribution with~$\lambda=\infty$.

We observe that when we let the fugacity~$\lambda$ grow, and hence favour larger independent sets, we obtain better bounds for really small values of the degree. On the other hand, we are more constrained in our choices, which translates into worse asymptotic bounds. 

It could be unsettling that restricting to maximal independent sets does not yield a strict improvement in the bound obtained with our method; indeed it is always possible to find an optimal fractional colouring using only maximal independent sets as colour classes, up to covering some vertices with weight more that~$1$. However, because we fix the realisation of the random independent set on a large part of the graph, we suffer from a deeper propagation of the constraints due to the maximality of the independent set. When working with a partial realisation of a maximal independent set, vertices at distance~$2$ from that realisation can be forbidden, hence the need to work with deeper patterns (we must always have the freedom to pick the root of the pattern in the independent set). 

Another unsettling observation comes from the fact that the optimised fugacity when working with the whole set of independent sets is~$\lambda$ tending to~$0$ as the degree~$d$ tends to infinity. This means that the independent set that receives the largest weight in our fractional colouring is the empty independent set, which seems rather wasteful. This is however needed in order to assign a non negligible weight to the independent set that contains only the root (when we work with patterns of depth 1). Indeed, there is only one independent set that contains the root, while there are exponentially many independent sets that do not contain it. This is no longer the case when we restrict to maximal independent sets, which explains why the optimal value of the fugacity is a constant in this case.

\subsection{Limitations of the method}
While our framework gives a lot of freedom in its possible applications, it still suffers from some limitations which we discuss hereafter.

We cannot improve the bounds for the fractional chromatic number by trying to increase the girth. Indeed, since the class of patterns that we need to consider in our framework must be hereditary in that case, increasing the depth would only strictly increase the number of patterns (the ones of smaller depth are still present). Therefore, the girth of the class of graphs for which we can derive our bound is determined by the choice of the probability distribution, and the depth of its dependencies.

Working with maximum independent sets, it appears that our method cannot prove a better bound for~$\rho(d,\infty)$ than~$d/2 + 1$, which is reached at some girth~$g_0$. We could not find an explanation for this boundary, although such an explanation would have a strong theoretical interest. So, in order to obtain new bounds with our method for~$\rho(d,g)$, one would have to use the hard-core model in different regimes than the one where~$\lambda$ is infinite.

The bounds obtained appear not to be tight, if we consider the pairs~$(d,g)$ for which we know the value of~$\rho$ and~$\chi_f$ and compare it with the bound obtained using our method. However, obtaining these tight bounds systematically requires a deep structural analysis already when~$d=3$, and an even deeper one when~$d=4$. It is unlikely that these structural analyses could be generalised in order to cover larger values of~$d$, while our method provides a smooth transition between the small values of~$d$ and the asymptotic regime. 

\subsection{Note added}
After this work first appeared in a public preprint repository, a work from Cames van Batenburg, Goedgebeur, Joret~\cite{CGJ20} improved our bound for~$\rho(3,6)$, by establishing that it is at most~$8/3$. This work relies again on a deep structural analysis of cubic graphs avoiding some finite family of graphs (all of which contain a~$C_5$) as a subgraph.

\subsection{Acknowledgement}
We are thankful to Ewan Davies for the insightful discussion about the behaviour of the hard-core distribution
on closed neighbourhoods, which led to \Cref{thm:triangle-free}.  The authors sincerely thank both referees
for their careful reading of this work and suggestions for improvement.



\begin{thebibliography}{10} 

\bibitem{BaBr98}
{\sc B.~Bajnok and G.~Brinkmann}, {\em On the independence number of triangle
free graphs with maximum degree three}, J. Combin. Math. Combin. Comput., 26
(1998), pp.~237--254.

\bibitem{Ber61}
{\sc C.~Berge}, {\em F{\"a}rbung von graphen, deren s{\"a}mtliche bzw.\ deren
ungerade kreise starr sind (zusammenfassung)}, Wiss. Z. Martin-Luther Univ.
Halle-Wittenberg, Math.-Natur. Reihe, 10 (1961), pp.~114--115.

\bibitem{Big98}
{\sc N.~Biggs}, {\em Constructions for cubic graphs with large girth},
Electron. J. Combin., 5 (1998).
\newblock \#A1.

\bibitem{Bol81}
{\sc B.~Bollob\'as}, {\em The independence ratio of regular graphs}, Proc.
Amer. Math. Soc., 83 (1981), pp.~433--436.

\bibitem{Bro41}
{\sc R.~L. Brooks}, {\em On colouring the nodes of a network}, Proc. Cambridge
Philos. Soc., 37 (1941), pp.~194--197.

\bibitem{CGJ20}
{\sc  W.~Cames van Batenburg, G.~Joret, J.~Goedgebeur}, {\em Large independent sets in triangle-free cubic graphs: beyond planarity},
Advances in Combinatorics, P7, 2020.

\bibitem{CRST06}
{\sc M.~Chudnovsky, N.~Robertson, P.~D. Seymour, and R.~Thomas}, {\em The
strong perfect graph theorem}, Ann.\ of Math. (2), 164 (2006), pp.~51--229.

\bibitem{DJKP20}
{\sc E.~{Davies}, R.~{de Joannis de Verclos}, R.~J. {Kang}, and F.~{Pirot}},
{\em {Colouring triangle-free graphs with local list sizes}}, Random Struct. Alg., 57 (2020), pp.~730--744. 

\bibitem{DKPS20+}
{\sc E.~{Davies}, R.~J. {Kang}, F.~{Pirot}, and J.-S.~{Sereni}},
\emph{Graph structure via local occupancy},
ArXiv e-prints (2020).

\bibitem{DSV14}
{\sc  Z.~Dvořák, J.-S.~Sereni, J.~Volec},
{\em Subcubic triangle-free graphs have fractional chromatic number at most 14/5},
J. London Math. Soc., Volume 89(3), pp. 641--662, 2014.

\bibitem{Erd47}
{\sc P.~Erd\H os}, {\em Some remarks on the theory of graphs}, Bull. Amer. Math. Soc., 53(4):292–294,04, 1947.

\bibitem{ErSz35}
{\sc P. Erd\H os and G. Szekeres}, {\em A Combinatorial Problem in Geometry}, Compos. Math. 2(1935), pp.~463--470.

\bibitem{ExJa08}
{\sc G.~Exoo and R.~Jajcay}, {\em Dynamic cage survey}, Elec. J. Combin.,
    Dynamic Surveys (2008).  \newblock \#DS16.

\bibitem{FrLo95}
{\sc K.~Fraughnaugh and S.~C. Locke}, {\em {$11/30$} (finding large independent
sets in connected triangle-free {$3$}-regular graphs)}, J. Combin. Theory
Ser. B, 65 (1995), pp.~51--72.

\bibitem{Jon84}
{\sc K.~F. Jones}, {\em Independence in graphs with maximum degree four}, J.
Combin. Theory Ser. B, 37 (1984), pp.~254--269.

\bibitem{HoWo18}
{\sc C.~Hoppen and N.~Wormald},
{\em Local Algorithms, Regular Graphs of Large Girth, and Random Regular Graphs},
Combinatorica 38 (2018), 619--664. 

\bibitem{KKV11}
{\sc F.~Kardo{\v s}, D.~Kr\'al', and J.~Volec}, {\em Fractional colorings of
cubic graphs with large girth}, SIAM J. Discrete Math., 25 (2011),
pp.~1454--1476.

\bibitem{Kin09}
{\sc A.~D. King}, {\em Claw-free graphs and two conjectures on~{$\omega$,
$\Delta$, and~$\chi$}}, Ph.D.~thesis, McGill University, 2009.

\bibitem{Kos76}
{\sc A.~V. Kostochka}, {\em Degree, girth and chromatic number}, in
Combinatorics ({P}roc. {F}ifth {H}ungarian {C}olloq., {K}eszthely, 1976),
{V}ol. {II}, vol.~18 of Colloq. Math. Soc. J\'{a}nos Bolyai, North-Holland,
Amsterdam-New York, 1978, pp.~679--696.

\bibitem{Mol19}
{\sc M.~Molloy}, {\em The list chromatic number of graphs with small clique
number}, J. Combin. Theory Ser. B, 134 (2019), pp.~264--284.

\bibitem{MoRe02}
{\sc M.~Molloy and B.~A. Reed}, {\em Graph colouring and the probabilistic
method}, vol.~23 of Algorithms and Combinatorics, Springer-Verlag, Berlin,
2002.

\bibitem{Ram30}
{\sc F.~P. Ramsey.}, {\em On a problem of formal logic}, Proc. London Math. Soc., s2-30(1):264–286, 1930.

\bibitem{She83}
{\sc J.~B. Shearer}, {\em A note on the independence number of triangle-free graphs}, Discrete Math.,46(1):83–87, 1983.

\bibitem{She91}
{\sc J.~B. Shearer}, {\em A note on the independence number of triangle-free
graphs, II}, J. Combin. Theory Ser. B, 53 (1991),
pp.~300--307.

\bibitem{Sta79}
{\sc W.~Staton}, {\em Some {R}amsey-type numbers and the independence ratio},
Trans. Amer. Math. Soc., 256 (1979), pp.~353--370.

\end{thebibliography}
\end{document}